\theoremstyle{plain}
\newtheorem{thm}{Theorem}[section]
\newtheorem{cor}[thm]{Corollary}
\newtheorem{pro}[thm]{Proposition}
\newtheorem{lem}[thm]{Lemma}
\newtheorem{proposition-principale}[thm]{Proposition principale}
\newtheorem{thm-principal}{Main Theorem}
\theoremstyle{definition}
\newtheorem{eg}[thm]{Example}
\newtheorem{rem}[thm]{Remark}
\newenvironment{defi-G}
{\noindent{\bf Definition.}\it}{\\}
\newenvironment{thm-M}
{\noindent{\bf Main Theorem.}\it }{}
\newenvironment{thm-C}
{\noindent{\bf Classification Theorem.}\it }{}
\newenvironment{thm-AA}
{\noindent{\bf Theorem A'.}\it}{\\ }
\newenvironment{thm-B}
{\noindent{\bf Theorem B.}\it}{\\ }
\newenvironment{thm-BB}
{\noindent{\bf Theorem B'.}\it}
\def\C{\mathbb{C}}
\def\R{\mathbb{R}}
\def\Q{\mathbb{Q}}
\def\H{\mathbb{H}}
\def\Z{\mathbb{Z}}
\def\N{\mathbb{N}}
\def\ii{{\sf{i}}}
\def\T{{\sf{T}}}
\def\P{\mathbb{P}}
\def\Sphere{\mathbb{S}}
\def\disk{\mathbb{D}}
\def\A{\mathbb{A}}
\def\SC{S(\mathbb C)}
\def\Aut{{\sf{Aut}}}
\def\Out{{\sf{Out}}}
\def\Sym{{\sf{Sym}}}
\def\Homeo{{\sf{Homeo}}}
\def\Graph{{\mathcal{G}}}
\def\Fam{{\sf{Fam}}}
\def\Rep{{\sf{Rep}}}
\def\Mod{{\sf{Mod}}}
\def\Sing{{\sf{Sing}}}
\def\Lim{{\mathrm{Lim}}}
\def\area{{\mathrm{Area}}}
\def\Supp{{\rm{Supp}}}
\def\Orb{{\rm{Orb}}}
\def\Init{{\mathrm{In}}}
\def\PGL{{\sf{PGL}}\,}
\def\PSL{{\sf{PSL}}\,}
\def\GL{{\sf{GL}}\,}
\def\SU{{\sf{SU}}\,}
\def\SL{{\sf{SL}}\,}
\def\Mat{{\sf{Mat}}\,}
\def\SU{{\sf{SU}}\,}
\def\tr{{\sf{tr}}}
\newcommand{\Id}{{\rm Id}}
\def\Ind{{\text{Ind}}}
\def\Sbar{{\overline S}}
\def\Sym{{\text{Sym}}}
\def\dist{{\sf{dist}}}
\def\dd{{\mathrm{d}}}
\newcommand{\christophe}[1]{{\color{blue}*}\marginpar{\tiny  \color{blue} CD: #1}}
\newcommand{\serge}[1]{{\color{red}*}\marginpar{\tiny  \color{red} SC: #1}}
\numberwithin{equation}{section}       
\begin{document}

\setlength{\baselineskip}{0.56cm}        
%
%
\title[Dynamics on Markov surfaces]
{Dynamics on Markov surfaces: classification of stationary measures}
\date{2023/2024}
\author{Serge Cantat, Christophe Dupont and Florestan Martin-Baillon}
\address{CNRS, IRMAR - UMR 6625 \\ 
Universit{\'e} de Rennes 
\\ France}
\email{serge.cantat@univ-rennes.fr}
\email{christophe.dupont@univ-rennes.fr}
\email{florestan.martin-baillon@univ-rennes.fr}
%
%

%
%

%
%

\begin{abstract}
Consider the four punctured sphere $\Sphere_4^2$. Each choice of four traces, one for each puncture, determines a relative character variety for the representations of the fundamental group of  $\Sphere_4^2$ in $\SL_2(\C)$. 
We classify the stationary probability measures for the action of the mapping class group $\Mod(\Sphere_4^2)$ on these character varieties.
\vspace{0.15cm}

\noindent{\sc{R\'esum\'e.}} Soit  $\Sphere_4^2$ la sphère privée de quatre points. \`A chaque choix de quatre traces, une par épointement, est associée une variété de caractères relative pour les représentations du groupe fondamental de  $\Sphere_4^2$ dans $\SL_2(\C)$. Nous classons les mesures de probabilité stationnaires pour l'action du grou\-pe modulaire $\Mod(\Sphere_4^2)$ sur ces variétés. 
\end{abstract}

\maketitle

\setcounter{tocdepth}{1}

\tableofcontents

\vfill

{\small {  Key words: representations, random and holomorphic dynamics, stationary and invariant measures. MSC2020: 20C15, 37C40, 37F10.}} 



{\small{The research activities of the authors  are partially funded by the European Research Council (ERC GOAT 101053021). The authors benefited from the support of the French government "Investissements d'Avenir" program integrated to France 2030 (ANR-11-LABX-0020-01).}}

\pagebreak

\section{Introduction} 

\subsection{Representations} Let $\Sphere_4^2$ denote the $2$-dimensional sphere with four punctures. A presentation of its fundamental group is 
\begin{equation}
\pi_1(\Sphere_4^2)=\langle \alpha, \beta, \gamma, \delta \; \vert \; \alpha\beta \gamma \delta = 1 \rangle
\end{equation}
where the generators $\alpha$, $\beta$, $\gamma$, $\delta$ correspond to disjoint simple loops around the punctures (see~Figure~\ref{fig:sphere}, taken from~\cite{Cantat-Loray}). Thus, $\pi_1(\Sphere_4^2)$ is the free group on the three generators $\alpha$, $\beta$, and $\gamma$.

\begin{figure}[h]\label{fig:four_punctured_sphere}
\includegraphics[width=0.5\textwidth]{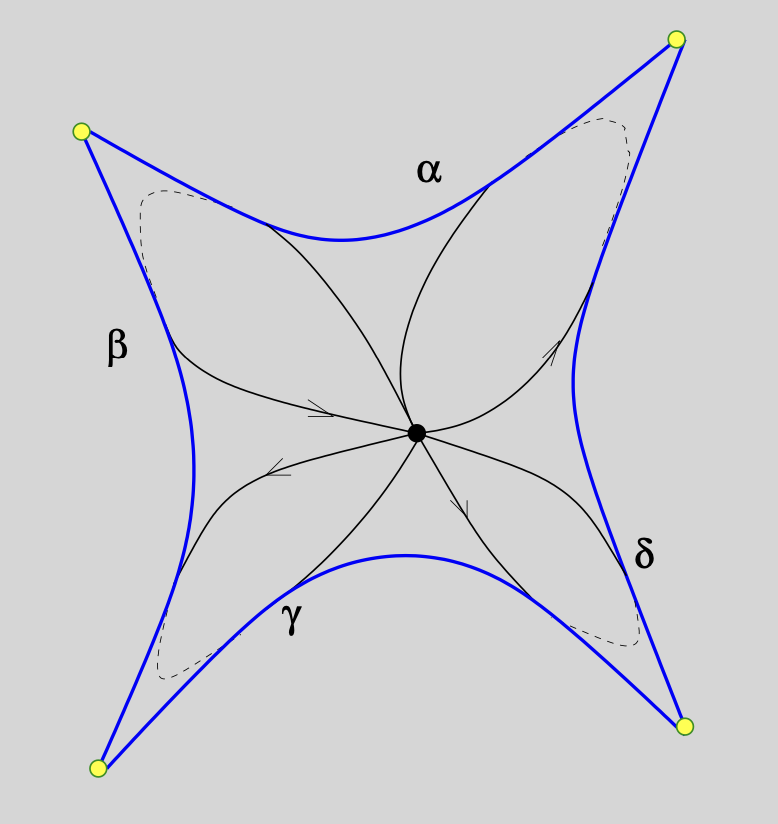}
\caption{ {\sf{The four punctured sphere. }}}
\label{fig:sphere}
\end{figure}

Let $\Rep(\Sphere_4^2)$ be the set of representations of $\pi_1(\Sphere_4^2)$ in $\SL_2(\C)$. Such a representation $\rho$ is uniquely determined by the three matrices $\rho(\alpha)$, $\rho(\beta)$, $\rho(\gamma)$, so that $\Rep(\Sphere_4^2)$ can be identified to $\SL_2(\C)^3$. As in~\cite{Benedetto-Goldman, Cantat-Loray}, we associate the following traces to such a representation
\begin{eqnarray}
a=\tr(\rho(\alpha)), \ b= \tr(\rho(\beta)), \ c=\tr(\rho(\gamma)), \ d=\tr(\rho(\delta)) \\
x=\tr(\rho(\alpha\beta)), \ y=\tr(\rho(\beta\gamma)), \ z=\tr(\rho(\gamma\alpha)). \quad\quad
\end{eqnarray}
Then, the polynomial map $\chi\colon \Rep(\Sphere_4^2)\to \A^7(\C)$ defined by 
\begin{equation}
\chi(\rho)=(a,b,c,d,x,y,z)
\end{equation} 
is invariant under conjugacy, 
its image is the hypersurface determined by the equation  
\begin{equation}\label{eq:Equation_SABCD}
x^2+y^2+z^2+xyz=Ax+By+Cz+D
\end{equation}
with 
\begin{eqnarray}\label{eq:A_B_C}
A= ab+cd,\  B= bc+ad,\ C= ac+bd,\\ 
D=4 -(a^2+b^2+c^2+d^2)-abcd, \quad \label{eq:D}
\end{eqnarray}
and $\chi$ is the quotient map for the action of $\SL_2(\C)$ on $\Rep(\Sphere_4^2)$ by conjugacy, in the sense of invariant theory. 
We shall denote the character variety $\Rep(\Sphere_4^2)/\!\!/\SL_2(\C)$ by $\chi(\Sphere_4^2)$ (instead of $\chi(\Rep(\Sphere_4^2))$).

For each choice of parameters $A$, $B$, $C$, $D$, we shall denote by $S_{(A,B,C,D)}$ the algebraic surface determined
by the Equation~\eqref{eq:Equation_SABCD}; it is a cubic surface in the affine space $\A^3$, of degree $2$ with respect to each variable
$x$, $y$, or $z$. The family of all 
these surfaces will be denoted by $\Fam$. For simplicity, we shall just write $S$ instead of $S_{(A,B,C,D)}$ for the elements of $\Fam$ 
(the quadruple $(A,B,C,D)$ is then uniquely determined by $S$). If
the parameters $A$, $B$, $C$, and $D$ are in a ring $R$, we denote by $S(R)$ the points of $S$ with coordinates in $R$. 

The compactification of $S$ in the projective space $\P^3$ will be denoted by 
$\Sbar$ and its boundary at infinity by
\begin{equation}
\partial S  = \Sbar  \setminus S.
\end{equation} 
For every $S\in \Fam$,  $\partial S$ is the triangle in the hyperplane at infinity given by the equation $xyz=0$.

\subsection{Mapping class group and Vieta involutions}\label{par:vieta_involutions}  
Let us view the mapping class group $\Mod^{\pm}(\Sphere_4^2)$  as the subgroup of $\Out(\pi_1(\Sphere_4^2))$ preserving the peripheral structure. Then,  $\Mod^{\pm}(\Sphere_4^2)$ 
acts on $\chi(\Sphere_4^2)$ by precomposition, the conjugacy class of a representation $\rho$ being sent to the conjugacy class
of $\rho\circ \Phi^{-1}$ for any mapping class $\Phi$. This gives a homomorphism $\Phi\mapsto f_\Phi$ into the group $\Aut(\chi(\Sphere_4^2))$ of automorphisms of the variety $\chi(\Sphere_4^2)$. As explained in~\cite[Sections 2.2 and 2.3]{Cantat-Loray}, there is an isomorphism
\begin{equation}
\PGL_2(\Z)\ltimes H\simeq\Mod^{\pm}(\Sphere_4^2)
\end{equation}
where $H$ is the group $(\Z/2\Z)^2$, and an exact sequence 
\begin{equation}
\Id\to \Gamma^{\pm}_2\to \PGL_2(\Z)\ltimes H\to \Sym(4)\to \Id
\end{equation}
where $\Sym(4)$ corresponds to the group of permutation of the four punctures of $\Sphere_4^2$ and $\Gamma^{\pm}_2$ 
is the congruence subgroup of $\PGL_2(\Z)$ modulo $2$. 
\begin{enumerate}
\item  $\Gamma^{\pm}_2$ is isomorphic to $\Z/2\Z\star \Z/2\Z\star \Z/2\Z$, generated by the three involutions
\begin{equation}\label{eq:involution_matrices}
\hat\sigma_x=\left(\begin{array}{cc} -1 & 2 \\ 0 & 1\end{array}\right), \ 
\hat\sigma_y=\left(\begin{array}{cc} 1 & 0 \\ 2 & -1\end{array}\right) , \ 
\hat\sigma_z=\left(\begin{array}{cc} 1 & 0 \\ 0 & -1\end{array}\right);
\end{equation}
\item $\Gamma^{\pm}_2$ is the reflection group of an ideal triangle in the upper half plane; 
\item the action of $\Gamma^{\pm}_2$ on the character variety $\chi(\Sphere_4^2)$ preserves the four coordinates $a$, $b$, $c$, $d$; 
in particular, it acts on each surface $S\in \Fam$ as a group of automorphisms of $S$. This action is faithfull for every $S$, and the image
has index at most $24$ in $\Aut(S)$ (see~\cite[Theorem 3.1]{Cantat-Loray}). 
\item the image  of $\Gamma^{\pm}_2$ in $\Aut(S)$ is generated by the Vieta involutions 
\begin{align}\label{eq: vieta}
s_x(x,y,z)&=(-x-yz+A, y, z) \\ 
s_y(x,y,z)&=(x, -y-zx+B, z)  \\ 
s_z(x,y,z)&=(x, y, -z-xy+C).
\end{align}
\end{enumerate}

In what follows, we denote by $\Gamma$ the abstract group $\Z/2\Z\star \Z/2\Z\star \Z/2\Z$. Depending on the action we look at, 
$\Gamma$ will determine a subgroup $\Gamma^{\pm}_2$ of isometries of the upper half plane, or a group of automorphisms $\Gamma_S$ of $S$ (for any $S$ in $\Fam$). Our goal is to describe the stochastic dynamics of this action on each of the surfaces $S$. 
For simplicity, we frequently write $\Gamma$ instead of $\Gamma_S$. 

\subsection{Invariant area form}\label{par:invariant_forms}  
On $S$, the $2$-form 
\begin{equation}
\area=\frac{dx\wedge dy}{2z+xy-C}=\frac{dy\wedge dz}{2x+yz-A}=\frac{dz\wedge dx}{2y+zx-B}
\end{equation}
is regular and does not vanish. 
According to Lemma 3.5 of~\cite{Cantat-Loray}, singularities of $S$ are quotient singularities, and $\area$ is an area form in the sense of orbifolds (locally, in a euclidean neighborhood of the singularity, $S$ is the quotient $\C^2/G$ for some finite group and $\area$ is the quotient of a $G$-invariant  symplectic $2$-form on $\C^2$). 

When $S$ is defined over $\R$, $\area$ is also defined over $\R$. To distinguish between the holomorphic $2$-form on $S(\C)$ and the real $2$-form on $S(\R)$, we use the notation $\area_\C$ and $\area_\R$.

The $2$-form $\area_\C$ (resp. $\area_\R$) is multiplied by $-1$ under the action of each of the involutions $s_x$, $s_y$, $s_z$.

When $S$ is defined over $\R$, $S(\R)$ may have a compact connected component; we denote such a component by $S(\R)_c$ (see Section~\ref{par:topology_of_S} for a precise definition when $S$ is singular). With our notation, it may happen that $S(\R)_c$ be reduced to a point; otherwise, it is a (possibly singular) sphere and the restriction of $\area_\R$ to $S(\R)_c$ determines (a) an orientation of this sphere and (b) a probability measure $\nu_\R$ on $S(\R)_c$, defined by 
\begin{equation}
\nu_\R(B)=\frac{1}{\int_{S(\R)_c}\area_{\R}}\int_{B}\area_\R
\end{equation}
for any borel subset $B$ of $S(\R)_c$.
We shall refer to this measure as the {\bf{symplectic measure}} on $S(\R)_c$. It is $\Gamma_S$-invariant. 

\subsection{Random dynamics}\label{par:introduction_random_dynamics}
Let $X$ be a locally compact metric space. We endow the group $\Homeo(X)$ with the compact-open topology.
Let $\mu$ be a probability measure on  $\Homeo(X)$. 
Denote by $\Omega$ the product space $\Homeo(X)^\N$ and endow it with the probability measure $\mu^\N$.
We shall say that a property holds for a {\bf{typical}} element $\omega\in \Omega$ if it holds for $\omega$ in a measurable subset $\Omega'$ with $\mu^\N(\Omega')=1$.
For every $\omega = (f_0,f_1,\ldots) \in \Omega$, we set 
\begin{equation}
f_\omega^n = f_{n-1} \circ \ldots \circ f_0.
\end{equation} 
A probability measure $\nu$ on $X$ is $\mu$-stationary if 
\begin{equation}\label{eq:definition_stationary}
\int_{\Homeo(X)}(f_*\nu) \; \dd\mu(f)=\nu,
\end{equation}
and such a measure is ergodic if it can not be written as a convex combination of two distinct stationary measures. 
Fix a point $q$ in $X$. Then, consider the random orbit $(f_\omega^n(q))$ and the empirical measures 
\begin{equation}\label{eq:empirical_measures}
\nu_N(\omega; q)=\frac{1}{N}\sum_{j=1}^N\delta_{f_\omega^j(q)}.
\end{equation}
A theorem of Breiman (see~\cite{Benoist-Quint:Book}) says that, for a subset of measure
$1$ in $\Omega$, if $\nu_{n_i}(\omega; q)$ converges towards a probability measure $\nu$ as $n_i$ goes to infinity, then $\nu$ is $\mu$-stationary. Thus, to understand how random orbits  distribute, we have to describe
stationary measures. 
We apply this viewpoint to the dynamics of~$\Gamma_S$. 

\vspace{0.2cm}

\begin{thm-M}--  Let $\mu$ be a probability measure on $\{s_x,s_y,s_z\}$ with $$\mu(s_x)\mu(s_y)\mu(s_z)>0.$$ 
Let $S$ be an element of $\Fam$. Let $\nu$ be a probability measure on $S(\C)$. If $\nu$ is $\mu$-stationary and ergodic, 
then the support of $\nu$ is compact, $\nu$ is invariant, and either $\nu$ is given by the average on a finite orbit of $\Gamma_S$, or
\begin{itemize}
\item   the parameters $A$, $B$, $C$, and $D$ defining $S$ are real and in $[-2,2]$,
\item  $S(\R)$ has a unique bounded component $S(\R)_c$, of dimension $2$, and
\item  $\nu$ coincides with the symplectic measure $\nu_\R$ induced by $\area_\R$ on $S(\R)_c$.
\end{itemize}
\end{thm-M}

\vspace{0.2cm}

This extends a result of Chung for the dynamics on the compact part $S(\R)_c$ when $(A,B,C,D)=(0,0,0,D)$ with $D\in [3.9,4[$ (see~\cite[Theorem B]{Chung}).

Since finite orbits have been classified, our Main Theorem gives a complete description of all $\mu$-stationary measures; in particular, 
{\sl{on each surface $S\in \Fam$, except on $S_{(0,0,0,4)}$,  the set of $\mu$-stationary measures is a finite dimensional simplex with at most $6$
vertices, the maximum $6$ being realized only by $S_{(0,0,0,3)}$}}. 

Our result certainly holds when the support of $\mu$ generates the group
$\Gamma$ and
satisfies an exponential moment condition. We wrote the proof assuming that the
support is equal to
$ \left\{ s_x, s_y, s_z  \right\}$
to simplify the exposition, as it
allows us to avoid the use of the general theory of random walks on $\Gamma$.


\subsection{Acknowledgement} We thank Romain Dujardin, Sébastien Gouëzel, Vincent Guirardel, Seung uk Jang, Frank Loray, Arnaud Nerrière and Juan Souto for interesting discussions related to this paper.

\section{The Markov surfaces and their automorphisms}

\subsection{Smoothness of $S(\C)$}\label{par:smoothness}
In~\cite{Benedetto-Goldman}, Benedetto and Goldman study the topology of the surfaces $S\in \Fam$. Their first result says that $S$ is singular if, and only if at least one of the parameters $a$, $b$, $c$, or $d$ 
is equal to $\pm 2$ or there 
is a reducible representation with boundary traces $a$, $b$, $c$, $d$; moreover, the latter case occurs if and only if the following discriminant vanishes:
\begin{equation*}
\Delta=(2(a^2+b^2+c^2+d^2)-abcd-16)^2- (4-a^2)(4-b^2)(4-c^2)(4-d^2).
\end{equation*}

\begin{eg}\label{eg:cayley_cubic_I}
The {\bf{Cayley cubic}} $S_{\mathrm{Ca}}=S_{(0,0,0,4)}$ is  defined by the equation $x^2+y^2+z^2+xyz=4$. It has four singularities, the maximum for an irreducible cubic surface. It is the quotient of the multiplicative group ${\mathbb{G}}_m\times{\mathbb{G}}_m$ by $\eta(u,v)=(1/u,1/v)$, the quotient map being 
\begin{equation}
(u,v)\mapsto (-u-1/u, -v-1/v, -uv-1/(uv)).
\end{equation}
\end{eg}

\subsection{Topology of $S(\R)$}\label{par:topology_of_S} 
Let $n$ be the number of boundary traces $a$, $b$, $c$, $d$ in the interval $]-2,2[$. According to~\cite[Theorem 1.2]{Benedetto-Goldman}, if $S(\R)$ is smooth then its Euler characteristic is equal to $2n-2$ and $S(\R)$ is homeomorphic to 
\begin{enumerate}
\item a quadruply punctured sphere if $n=0$ and $abcd<0$;
\item a disjoint union of a triply punctured torus and a disk if $n=0$ and $abcd>0$;
\item a disjoint union of a triply punctured sphere and a disk if $n=1$;
\item a disjoint union of an annulus and two disks if $n=2$;
\item a disjoint union of four disks if $n=3$;
\item a disjoint union of four disks and a sphere if $n=4$.
\end{enumerate}
In particular, if $S(\R)$ has a compact connected component, then this component is unique and is homeomorphic to a sphere. 

When $S$ is allowed to have singularities, we define the {\bf{components}} of $S(\R)$ as follows. Firstly, if $S(\R)$ has an isolated point, this point will be one of the components of $S(\R)$; note that there is at most one isolated point, it must be one of the singularities of $S(\R)$, and a well chosen perturbation of  the coefficients $(a, b, c, d)$ will turn this point into a small sphere (resp. will make this point disappear). 
Secondly,  consider the smooth part $S(\R)\setminus \Sing(S)$, and split it as a disjoint union of connected components $S(\R)_i^0$; then, replace each $S_i^0(\R)$ by its closure (equivalently, add to $S(\R)_i^0$ the singularities of $S$ which are contained in the closure of $S(\R)_i^0$); the result 
\begin{equation}
S(\R)_i:=\overline{S(\R)_i^0}
\end{equation}
will be one of the components of $S(\R)$. For instance, in the case of the Cayley cubic, $S(\R)$ has a compact component: it contains four singularities, each of which is contained in exactly one of the unbounded components. 

With this definition at hand, the classification of Benedetto and Goldman remains correct, except that the components are not necessarily disjoint (they may touch at singularities) and the sphere in Case~(6) can be reduced to a singular point. In particular, one gets the following result: {\sl{if $S(\R)$ has a compact component, this component is unique, and it is homeomorphic to a sphere or a singleton}}. If such a component exists, it will be denoted by $S(\R)_c$; when $S(\R)$ has an isolated point, then $S(\R)_c$ is equal to this point. 

\subsection{Representations in the compact component}
The group $\SL_2(\C)$ has two real forms, one is $\SL_2(\R)$, the other is $\SU_2$. Since $\SU_2$ is compact, representations in $\SU_2$ give points in 
$S(\R)_c$; but points of $S(\R)_c$ may also correspond to representations in $\SL_2(\R)$ (see~\cite[Proposition 1.4]{Benedetto-Goldman}). In fact, according to~\cite[Lemma 2.7]{Cantat-Loray}, the map
\begin{equation}
\Pi\colon (a,b,c,d)\in \C^4\mapsto (A,B,C,D)\in \C^4
\end{equation} 
from Equations~\eqref{eq:A_B_C} and~\eqref{eq:D}
is a ramified cover of degree $24$ and ${\mathrm{Jac}}(\Pi)=-\frac{1}{2}\Delta$, where $\Delta$ is the discriminant from Section~\ref{par:smoothness}. 
The following automorphisms of $\C^4$ generate a group $Q\subset \Aut(\C^4)$ of order $8$:
\begin{itemize}
\item[(a)] the simultaneous sign change of the parameters $a$, $b$, $c$, and $d$
\item[(b)] the  permutations of $a$, $b$, $c$, and $d$
 which are a composition of two transpositions with disjoint supports.
\end{itemize}
The ramified cover $\Pi$ is invariant under the action of $Q$. But $\Pi$ is not Galois, and to understand the structure of the $24$ points in the fibers of $\Pi$, one has to introduce the Okamoto correspondences: see Section 3 of~\cite{Cantat-Loray} for this.

 If $S(\R)$ has a compact component, then $(A,B,C,D)$ is contained in $[-2,2]^4$ and $\Pi^{-1}\{(A,B,C,D)\}$ is also entirely contained in $[-2,2]^4$. Moreover, different choices of $(a,b,c,d)$ in 
$\Pi^{-1}\{(A,B,C,D)\}$ lead to different types of representations, as summarized in the following result (see~\cite[Theorem B and Proposition 3.13]{Cantat-Loray}).
\begin{thm}
Let $(A,B,C,D)$ be real parameters. If the smooth part of $S(\R)$ has a bounded component, then all parameters $(a,b,c,d)$ in $\Pi^{-1}\{(A,B,C,D)\}$ are real. 
Moreover, the points of $S(\R)_c$ are conjugacy classes of  
 $\SU_2$ and $\SL_2(\R)$-representations, depending on the choice of $(a,b,c,d)$: modulo the action of the group $Q$, each point of $S(\R)_c$ corresponds to two $\SU_2$-representa\-tions and one $\SL_2(\R)$-representation. \end{thm}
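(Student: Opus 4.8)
There are two assertions to establish: that every $(a,b,c,d)\in\Pi^{-1}\{(A,B,C,D)\}$ is real; and that, among the three $Q$-orbits into which the twenty-four points of this fibre are organized, two consist of the boundary data of $\SU_2$-representations and one of the boundary data of $\SL_2(\R)$-representations. The plan is to deduce reality from the inclusion $\Pi^{-1}\{(A,B,C,D)\}\subset[-2,2]^4$ recalled just above the statement, and to treat the second assertion by attaching to every irreducible representation with real character a sign $\lambda\in\{+1,-1\}$ detecting whether it is conjugate into $\SU_2$ or into $\SL_2(\R)$, and then computing this sign on the three orbits by following it through the Okamoto correspondences.

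\textbf{Reality of the fibre.} Suppose the smooth part of $S(\R)$ has a bounded component. By the Benedetto--Goldman classification recalled in Section~\ref{par:topology_of_S}, this happens only in case~(6); the closure of that component is then a genuine (possibly singular) sphere $S(\R)_c$, not a single point, so $S(\R)$ has a compact component in the sense of that section. Consequently $(A,B,C,D)\in[-2,2]^4$ and $\Pi^{-1}\{(A,B,C,D)\}\subset[-2,2]^4$, so every $(a,b,c,d)$ in the fibre is real. (Alternatively, one may exhibit a real point of the fibre directly: the relative $\SU_2$-character variety attached to angles $\theta_i$ with $2\cos\theta_i$ a lift of the $(A,B,C,D)$-data is a symplectic $2$-sphere mapping onto $S(\R)_c$, which produces one real, indeed $\SU_2$, lift; reality of the remaining twenty-three points then follows because the fibre is a single orbit of the group generated by $Q$ and the Okamoto correspondences of~\cite[Section~3]{Cantat-Loray}, all of which are defined over $\Q$ and hence preserve $\R^4$.)

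\textbf{The sign invariant.} Fix a real lift $(a,b,c,d)$ and a point $(x,y,z)\in S(\R)_c$ whose representation $\rho$ is irreducible; reducible characters form a proper closed subset of $S(\R)_c$, empty for generic $(A,B,C,D)$, and the general case will follow by continuity. All seven traces of $\rho$ are real, so $\overline{\rho}$ has the same (irreducible) character as $\rho$ and is conjugate to it: there is $A\in\SL_2(\C)$ with $\overline{\rho(\gamma)}=A\,\rho(\gamma)\,A^{-1}$ for all $\gamma$, and taking conjugates shows that $\overline{A}A$ commutes with the image, hence $\overline{A}A=\lambda\,\Id$ with $\lambda\in\{+1,-1\}$ independent of the choice of $A$. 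Classically $\lambda=+1$ precisely when $\rho$ is conjugate into $\SL_2(\R)$ and $\lambda=-1$ precisely when $\rho$ is conjugate into $\SU_2$; equivalently, $\rho$ preserves a Hermitian form, unique up to a nonzero real scalar, which is definite in the $\SU_2$ case and of signature $(1,1)$ in the $\SL_2(\R)\cong\SU(1,1)$ case. The sign $\lambda$ is invariant under the $Q$-action --- $Q$ acts by precomposing $\rho$ with automorphisms of $\pi_1(\Sphere_4^2)$ and by tensoring with a sign character, operations that preserve conjugacy into $\SU_2$ or into $\SL_2(\R)$ --- and, being locally constant in $\rho$, it is constant on the (for generic $(A,B,C,D)$, connected) dense open set of irreducible points of $S(\R)_c$. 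Hence $\lambda$ depends only on the $Q$-orbit of the lift, and it remains to evaluate it on the three orbits.

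\textbf{The count, and the main obstacle.} By the description of $\Pi^{-1}\{(A,B,C,D)\}$ and of the Okamoto correspondences in~\cite[Section~3]{Cantat-Loray}, the Okamoto group permutes the three $Q$-orbits transitively. To finish, one writes $\lambda$ as the sign of an explicit polynomial $P(a,b,c,d,x,y,z)$ --- for instance a product of the pairwise quantities of the form $t^2+u^2+v^2-tuv-4$ that govern whether two elliptic generators with those traces generate an irreducible subgroup of a common copy of $\SU_2$ --- and tracks how the sign of $P$ transforms under an Okamoto correspondence restricted to the part of the real locus lying over $S(\R)_c$; a finite combinatorial check then shows that exactly one of the three orbits is of $\SL_2(\R)$-type, the other two of $\SU_2$-type. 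Equivalently, it suffices to pin the count down on a single well-understood surface --- a small deformation of the Cayley cubic, or any $S\in\Fam$ whose angles $\theta_i$ are all small, where $S(\R)_c$ is visibly the standard symplectic $2$-sphere built from a spherical quadrilateral and the $\SU_2$-orbits are evident --- and then transport the answer using connectedness of the region of parameters for which the smooth part of $S(\R)$ has a bounded component. This sign bookkeeping through the Okamoto correspondences is where essentially all the work lies; everything else is formal. Finally, at the finitely many reducible points that can occur in $S(\R)_c$ for special $(A,B,C,D)$, a representation with all traces in $[-2,2]$ is conjugate into the common diagonal torus of $\SU_2$ and of $\SL_2(\R)$, so such points are consistent with the generic count and do not affect it.
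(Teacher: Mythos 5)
The paper does not prove this statement: it is quoted from \cite{Cantat-Loray} (Theorem B and Proposition 3.13), so there is no internal proof to compare yours against. Judged on its own, your architecture is the standard and correct one: reality of the fibre from the inclusion $\Pi^{-1}\{(A,B,C,D)\}\subset[-2,2]^4$, and the dichotomy $\SU_2$ versus $\SL_2(\R)$ via the sign $\lambda$ defined by $\overline{A}A=\lambda\,\Id$, which is indeed invariant under $Q$, locally constant, and hence constant on the connected irreducible locus of $S(\R)_c$ for a fixed lift.

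However, there is a genuine gap exactly where you locate it yourself. The substance of the second assertion is the count: of the three $Q$-orbits in the fibre, two are of $\SU_2$-type and one of $\SL_2(\R)$-type. You reduce this to ``a finite combinatorial check'' of how a sign polynomial transforms under the Okamoto correspondences, or alternatively to a computation on one well-understood surface plus a connectedness-of-parameters argument, but you carry out neither; nothing in your text rules out the splits $3+0$, $0+3$, or $1+2$. The transitivity of the Okamoto group on the three $Q$-orbits, which you also assert without proof, would only show that the orbits can be permuted, not which types they carry. There are secondary soft spots as well: your primary argument for reality quotes the inclusion $\Pi^{-1}\{(A,B,C,D)\}\subset[-2,2]^4$ recalled just before the theorem, but that inclusion already contains the first assertion, so the deduction is circular unless you prove the inclusion; and your alternative argument (one real $\SU_2$ lift plus Okamoto orbits ``defined over $\Q$'') needs the correspondences to be genuine $\R$-point-preserving maps acting transitively on the fibre, which is not automatic for a non-Galois cover and for correspondences rather than morphisms. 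As it stands the proposal is a plausible reduction of the theorem to a computation, not a proof.
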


\section{Invariant compact subsets and invariant measures on them}

In this section, we summarize some known results concerning $\Gamma_S$-invariant compact subsets of $S(\C)$ and derive from them a classification of $\Gamma_S$-invariant probability measures on $S(\C)$.

\subsection{Finite orbits}
Boalch, and Lisovyy and Tykhyy obtained a classification of all finite orbits of $\Gamma$ in the surfaces $S\in \Fam$. We summarize their results.

\subsubsection{Short orbits}\label{par:short_orbits} 
Finite orbits of $\Gamma$ with at most $4$ elements are classified in~\cite[Lemma 39]{Lisovyy-Tykhyy}.
These finite orbits come in families, depending on $3$, $2$, or $1$ parameters, up to permutation of the coordinates. Fixed points coincide with singularities of $S$, and form a $3$-parameter family, the parameters $(a,b,c,d)$ being on the locus $(4-a^2)(4-b^2)(4-c^2)(4-d^2)\Delta=0$. Orbits of length $2$ depend on $2$ parameters, for instance $\{(x,0,0), (x',0,0)\}$ is such an orbit if $A=x+x'$, $B=0$, $C=0$, and $D=4+x+x'$. Orbits of length $3$ or $4$ depend on $1$ parameter; for a generic choice of the parameter, the points in the orbit correspond to representations of $\pi_1(\Sphere_4^2)$ with an infinite image. 

\subsubsection{Finite groups}\label{par:finite_groups_orbits} Let $F$ be a finite subgroup of $\SL_2(\C)$. Then, each representation of $\pi_1(\Sphere_4^2)$ into $F$ gives rise to a finite orbit of $\Gamma$ 
in the character variety $\chi(\Sphere_4^2)$. 
Changing $F$ into a conjugate subgroup of $\SL_2(\C)$ does not change the corresponding orbit. 
The finite orbits obtained with this method have been described by Boalch in~\cite{Boalch:Fourier2003, Boalch:PLMS2005, Boach:Crelle2006}. Note that changing the parameters $(a,b,c,d)$ in $\Pi^{-1}(A,B,C,D)$ may turn a representation inside a finite group into a representation with infinite image. 

\begin{eg} Consider the Cayley cubic $S_{\mathrm{Ca}}=S_{(0,0,0,4)}$ described in Example~\ref{eg:cayley_cubic_I}. The group $\GL_2(\Z)$ acts by 
monomial transformations on the multiplicative group ${\mathbb{G}}_m(\C)\times{\mathbb{G}}_m(\C)$ and each point of type $(e^{2\ii\pi p/q}, e^{2\ii\pi p'/q'})$ has a finite orbit for this action. 
On the other hand, this action commutes to the involution $\eta$ and induces a subgroup of $\Aut(S)$ that contains 
$\Gamma_{S_{\mathrm{Ca}}}$ as a finite index subgroup (see~\cite{Cantat-Loray}). Thus, their projections 
\begin{equation}
(-2\cos(2\ii\pi p/q), -2\cos(2\ii\pi p'/q'), -2\cos(2\ii\pi (p/q+p'/q')))
\end{equation} 
give points on the Cayley cubic with finite $\Gamma_{S_{\mathrm{Ca}}}$-orbits. The Cayley cubic is the unique example of a surface $S\in \Fam$
containing infinitely many finite orbits.
\end{eg}

\subsubsection{Boalch-Klein orbit}\label{par:boalch-klein} In~\cite{Boalch:PLMS2005}, Boalch constructs a finite orbit that is not given by any representation into a finite subgroup of $\SL_2(\C)$ (it comes from a representation into a finite subgroup of $\SL_3(\C)$, though). 
The surface is defined by the equation
\begin{equation}
x^2+y^2+z^2+xyz=x+y+z
\end{equation}
and the orbit is made of the seven points
$(0,0,0)$, $(1,0,0)$, $(0,1,0)$, $(0,0,1)$,
$(1,1,0)$, $(1,0,1)$, and $(0, 1,1)$. 
There are $24$
choices of parameters $(a,b,c,d)$ giving rise to $(A,B,C,D)=(1,1,1,0)$.
One of them is 
\begin{equation}
a=b=c=2\cos(2\pi/7), \quad d= 2\cos(4\pi/7).
\end{equation}
For such a choice, the image of the representation $\pi_1(\Sphere_4^2)\to \PSL_2(\C)$ is conjugate to the triangle group $T(2,3,7)$.
The Galois conjugates of this representations 
provide two distinct representations in $\SU_2$ (corresponding to conjugates of $a$, $b$, $c$, $d$). Modulo Okamoto symmetries, 
all orbits of length $7$ are given by these three representations.

\subsubsection{Classification} 
Lisovyy and Tykhyy proved that {\sl{every finite orbit of $\Gamma$ is given by one of the above examples}}. 
The parameters for finite orbits are listed in Lemma~39 page 147, Theorem 1 page 149 and Table 4 page 150 of~\cite{Lisovyy-Tykhyy}. Each point with a finite orbit of length $\geq 5$ is determined by a representation of $\pi_1(\Sphere_4^2)$ in a finite subgroup of $\SU_2$, except for the Boalch-Klein orbit. 

\subsection{Invariant compact subsets and probability measures}
From Lemma 4.3 and Theorems~B and~C of~\cite{Cantat-Loray}, one gets the following result.
\begin{thm}\label{thm:compact_invariant} Let $S$ be an element of $\Fam$.  If $\Gamma_S$ preserves a compact subset $K$ of $S(\C)$ with at least five elements then 
\begin{enumerate}
\item the parameters $a$, $b$, $c$, and $d$ are in $[-2,2]$;
\item $S(\R)$ has a unique compact component $S(\R)_c$;
\item  $K$ is   either finite or   equal to $S(\R)_c$.
\end{enumerate}
\end{thm}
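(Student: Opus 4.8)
The plan is to obtain the three conclusions by combining the dynamical analysis of \cite{Cantat-Loray} with the topological description of real loci in \cite{Benedetto-Goldman}; I indicate at the end where the genuine difficulty lies. First, a reduction: if $K$ is finite the statement holds (its third alternative being met), so assume $K$ is infinite; it then remains to prove items~(1), (2) and the equality $K=S(\R)_c$. Since $K$ is compact and contained in the affine surface $S(\C)=\Sbar\setminus\partial S$, it is disjoint from the $\Gamma_S$-invariant triangle $\partial S=\{xyz=0\}$; and since $K$ is $\Gamma_S$-invariant, the $\Gamma_S$-orbit of each of its points stays inside $K$ and is therefore relatively compact. Thus $K$ is an infinite set consisting of points of $S(\C)$ with relatively compact $\Gamma_S$-orbit.

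The behaviour of such points is governed by the loxodromic automorphisms contained in $\Gamma_S$. For instance $g=s_x\circ s_y\circ s_z$ corresponds, under the isomorphism recalled in Section~\ref{par:vieta_involutions}, to a hyperbolic element of $\PGL_2(\Z)$, namely $\left(\begin{smallmatrix}3&2\\2&1\end{smallmatrix}\right)$ with eigenvalues $2\pm\sqrt3$, and hence acts on $S$ as an automorphism of positive entropy; in particular $K$, being $g$-invariant, is contained in the set of points of $S(\C)$ with bounded forward and backward $g$-orbit. Using this, together with the technical input \cite[Lemma~4.3]{Cantat-Loray}, Theorem~C of \cite{Cantat-Loray} classifies the compact $\Gamma_S$-invariant subsets of $S(\C)$: each is either finite or equal to a compact connected component $S(\R)_c$ of $S(\R)$, and the latter possibility occurs only when $(A,B,C,D)$ is real, with --- by \cite[Theorem~B]{Cantat-Loray} --- all preimages $(a,b,c,d)\in\Pi^{-1}(A,B,C,D)$ real and contained in $[-2,2]$.

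Applied to our infinite $K$, this forces the second alternative. Then $(a,b,c,d)\in[-2,2]^4$, which is item~(1), and $K=S(\R)_c$. Moreover $S(\R)$ now has a compact component (namely $K$), and by \cite[Theorem~1.2]{Benedetto-Goldman} --- as recalled in Section~\ref{par:topology_of_S} --- a compact component of $S(\R)$, when present, is unique and homeomorphic to a sphere or to a point; this is item~(2), and together with the previous line it yields item~(3). The only verification of a non-citational nature that remains is that the hypothesis actually used --- ``$K$ is compact, $\Gamma_S$-invariant, with at least five elements'' --- matches the setting of the quoted statements; the classification of short orbits recalled in Section~\ref{par:short_orbits} is what makes the threshold five the appropriate one, since finite orbits of length at most $4$ already occur for arbitrary, possibly non-real, parameters.

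I expect the genuine obstacle to be the rigidity statement \cite[Theorem~C]{Cantat-Loray} itself --- that an infinite compact $\Gamma_S$-invariant set is necessarily real and exhausts the compact component --- which hides two essentially independent difficulties. On the complex side one must exclude \emph{thin} invariant compacta (of Julia-set or horseshoe type) supported off $S(\R)$; this rests on the positive-entropy dynamics of the loxodromic elements of $\Gamma_S$ together with the $\Gamma_S$-invariant holomorphic area form $\area_\C$ (invariant up to sign, hence defining an invariant smooth volume on $S(\C)$), through which one shows that bounded recurrence must concentrate on a real-analytic slice. On the real side one must rule out infinite, nowhere-dense, closed $\Gamma_S$-invariant subsets of the sphere $S(\R)_c$ --- which would fail for a single positive-entropy area-preserving surface diffeomorphism, and whose exclusion therefore truly uses the full non-elementary group action. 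In the present paper both inputs are imported from \cite{Cantat-Loray}, so the work genuinely left is the bookkeeping above: lining up the hypotheses and reading items~(1)--(3) off from the assumption that $K$ is infinite, compact and $\Gamma_S$-invariant.
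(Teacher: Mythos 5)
Your proof is correct and takes essentially the same route as the paper, which likewise obtains the statement directly from Lemma~4.3 and Theorems~B and~C of \cite{Cantat-Loray} together with the Benedetto--Goldman description of the compact component of $S(\R)$. (One cosmetic slip in your aside: the eigenvalues of $\left(\begin{smallmatrix}3&2\\2&1\end{smallmatrix}\right)$ are $2\pm\sqrt{5}$, not $2\pm\sqrt{3}$; the element is of course still hyperbolic, so nothing is affected.)
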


In~\cite{Goldman:Geometry_Topology} and~\cite{Pickrell-Xia:II},  Goldman, Pickrell and Xia prove 
that the symplectic measure $\nu_\R$ is ergodic for the action of $\Gamma_S$ on $S(\R)_c$. 
Together with Theorem~\ref{thm:compact_invariant}, this gives a classification of invariant probability measures with compact support
(and, as we shall see in Corollary~\ref{cor:classification_of_invariant_measures_II}, a classification of all invariant probability measures): 

\begin{cor}\label{cor:classification_of_invariant_measures_I} Let $\nu$ be a $\Gamma_S$-invariant  probability measure   on $S(\C)$ with compact support. If 
$\nu$ is ergodic, then   
 either $\nu$ is the counting measure on a finite orbit of $\Gamma_S$, 
  or the parameters $a$, $b$, $c$, and $d$ are in $[-2,2]$, $S(\R)$ has a unique compact component and $\nu$ is the symplectic measure $\nu_\R$ introduced in Section~\ref{par:invariant_forms}.
 \end{cor}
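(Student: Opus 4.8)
The plan is to deduce Corollary~\ref{cor:classification_of_invariant_measures_I} from the two inputs already collected in this section: Theorem~\ref{thm:compact_invariant}, which restricts the possible compact invariant sets, and the Goldman--Pickrell--Xia ergodicity statement for $\nu_\R$ on $S(\R)_c$. So the argument is essentially a packaging step; the only genuine work is a soft measure-theoretic lemma about supports.

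First I would let $\nu$ be a $\Gamma_S$-invariant ergodic probability measure on $S(\C)$ with compact support, and set $K:=\Supp(\nu)$. Since $K$ is a nonempty compact set and is $\Gamma_S$-invariant (invariance of $\nu$ under each $s_x$, $s_y$, $s_z$ forces $s_\bullet(\Supp\nu)=\Supp\nu$), I distinguish two cases according to the cardinality of $K$. If $K$ is finite, then $\Gamma_S$ permutes the finitely many points of $K$; the ergodic invariant measures supported on a finite $\Gamma_S$-invariant set are exactly the uniform (counting) measures on the individual finite orbits — any invariant measure is a convex combination of these, and ergodicity picks out a single vertex. This gives the first alternative in the statement. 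The only point to note here is that a finite invariant set is a finite union of finite orbits, so at least one $\Gamma_S$-orbit contained in $K$ is finite, and the counting measure on it is invariant; ergodicity then identifies $\nu$ with it.

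If instead $K$ is infinite, then in particular $K$ has at least five elements, so Theorem~\ref{thm:compact_invariant} applies: the parameters $a,b,c,d$ lie in $[-2,2]$, the real locus $S(\R)$ has a unique compact component $S(\R)_c$, and $K$ is either finite or equal to $S(\R)_c$. Since we are in the case $K$ infinite, necessarily $K=S(\R)_c$; in particular $S(\R)_c$ is not a singleton, hence is a (possibly singular) $2$-sphere and $\nu_\R$ is well defined. Now $\nu$ is a $\Gamma_S$-invariant ergodic probability measure whose support is all of $S(\R)_c$. By Goldman--Pickrell--Xia, $\nu_\R$ is $\Gamma_S$-invariant and ergodic on $S(\R)_c$. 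Two ergodic invariant probability measures for the same action are either equal or mutually singular; if $\nu\neq\nu_\R$ they would be mutually singular, and then $\frac12(\nu+\nu_\R)$ would be a non-ergodic invariant measure whose ergodic decomposition has two distinct components, contradicting... more directly: mutual singularity of two ergodic measures for a group action is impossible here because $\Supp(\nu)=S(\R)_c=\Supp(\nu_\R)$ would not immediately give a contradiction by itself — so the cleaner route is to invoke that an ergodic invariant measure is, by the ergodic decomposition, an extreme point of the simplex of invariant measures, and that any two extreme points are mutually singular; since $\nu$ and $\nu_\R$ are both invariant ergodic and $\nu_\R$ has full support $S(\R)_c\supseteq\Supp(\nu)$, absolute continuity considerations (or simply: $\nu_\R$ ergodic implies every invariant set has $\nu_\R$-measure $0$ or $1$, and one shows $\nu\ll\nu_\R$ by a standard disintegration / Hopf-type argument) force $\nu=\nu_\R$. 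I would state this last implication as: \emph{two ergodic invariant probability measures for a group action are either equal or mutually singular, and since $\nu$ lives on the support of $\nu_\R$, they cannot be mutually singular, hence $\nu=\nu_\R$} — to be tightened in the write-up.

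The main obstacle, then, is not conceptual but bookkeeping: making the passage from ``$\nu$ ergodic with $\Supp\nu=S(\R)_c$'' to ``$\nu=\nu_\R$'' fully rigorous, i.e. invoking uniqueness of the ergodic invariant measure with full support in the correct form. One clean formulation: if $\nu_\R$ is ergodic and $\nu$ is any invariant probability measure with $\Supp(\nu)\subseteq \Supp(\nu_\R)$ and $\nu$ ergodic, then $\nu$ and $\nu_\R$ are two extreme points of the compact convex set of invariant probability measures, hence mutually singular unless equal; mutual singularity would produce a $\Gamma_S$-invariant Borel set $E\subseteq S(\R)_c$ with $\nu_\R(E)=1$, $\nu(E)=0$, and then $S(\R)_c\setminus E$ is a nonempty (it carries $\nu$) $\Gamma_S$-invariant set of $\nu_\R$-measure $0$ — which is \emph{not} in itself a contradiction, so in fact one does need the stronger input that $\nu_\R$ is ergodic \emph{and} that the action on $(S(\R)_c,\nu_\R)$ has no invariant set of intermediate measure supporting another invariant measure; this is automatic from ergodicity of $\nu_\R$ together with $\nu\ll\nu_\R$, the latter being the one step requiring an argument (e.g. via the fact that $\Gamma_S$ acts by diffeomorphisms preserving the smooth measure $\nu_\R$ on the smooth locus, together with a density/recurrence argument, or simply by citing that $\nu_\R$ is the unique invariant probability measure on $S(\R)_c$ when the finite orbits are excluded, which is exactly what \cite{Goldman:Geometry_Topology, Pickrell-Xia:II} establish in the form we need). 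I would therefore organize the final paragraph around citing unique ergodicity on $S(\R)_c$ in the strong form and concluding $\nu=\nu_\R$ immediately.
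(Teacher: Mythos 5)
Your handling of the atomic/finite case and your use of Theorem~\ref{thm:compact_invariant} to identify $\Supp(\nu)=S(\R)_c$ in the infinite case match the paper. The problem is the last step, and you essentially diagnose it yourself before waving it away: knowing that $\nu$ and $\nu_\R$ are both ergodic and share the support $S(\R)_c$ does \emph{not} force them to be equal. Two distinct ergodic invariant measures are mutually singular but can perfectly well both have full support (think of Lebesgue measure versus a fully supported singular ergodic measure for a hyperbolic toral automorphism). The fixes you float do not close the gap: ``unique ergodicity on $S(\R)_c$'' is not what \cite{Goldman:Geometry_Topology, Pickrell-Xia:II} prove --- they prove \emph{ergodicity} of $\nu_\R$, nothing more --- and it is false in general ($S(\R)_c$ may contain finite orbits, e.g.\ infinitely many on the Cayley cubic); citing it ``in the form we need'' would be circular, since uniqueness among fully supported ergodic measures is exactly what the corollary asserts. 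The one genuinely missing ingredient is a proof that $\nu\ll\nu_\R$, and ``a standard disintegration / Hopf-type argument'' is not a proof.

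The paper supplies that ingredient concretely. First, one rules out $\nu$ charging any algebraic curve (using that $\Gamma_S$ preserves no curve, \cite[Theorem D]{Cantat-Loray}); this is needed so that the pushforward $\pi_*\nu$ under the projection $\pi(x,y,z)=x$ has no atoms. Then one uses the composition $g=s_z\circ s_y$, which preserves the fibers of $\pi$ and acts on each smooth real fiber (an ellipse) as a rotation of angle $\theta$ with $2\cos(\pi\theta)=x_0$. Since $\pi_*\nu$ is atomless, $\pi_*\nu$-almost every fiber has irrational rotation number, so the unique $g$-invariant probability on that fiber is the smooth (Lebesgue) one; the conditional measures of $\nu$ in the disintegration over $\pi$ are $g$-invariant, hence smooth. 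Repeating with the second coordinate gives $\nu\ll\nu_\R$, and only then does ergodicity of $\nu_\R$ (the Radon--Nikodym derivative being invariant, hence constant) yield $\nu=\nu_\R$. Without this fibered rotation argument, or some substitute for it, your proof does not go through.
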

 
 Since the proof is a simple variation on the arguments of~\cite{Goldman:Geometry_Topology}, \cite{Cantat-Loray}, and~\cite{Cantat-Dujardin:Transformation-Groups}, we only sketch it. 
 
\begin{proof}
Denote by $K$ the support of $\nu$. 
If $\nu$ has an atom at a point $q\in S(\C)$ then the orbit of $q$ contains at most $\nu(\{q\})^{-1}$ points 
because $\nu$ is a probability measure. Then, by ergodicity,  $\nu$ coincides with the counting measure on $\Gamma_S(q)$.

Now, suppose that $\nu$ does not have any atom. If there is an irreducible algebraic curve $C\subset S(\C)$ with $\nu(C)>0$, then 
$\nu(f(C)\cap C)=0$ for every $f\in \Gamma_S$ that does not preserve $C$. Thus, the orbit of $C$ under $\Gamma_S$ is a  union of at most $\nu(C)^{-1}$  irreducible curves and we obtain a contradiction because $\Gamma_S$ does not preserve any curve (see~\cite[Theorem~D]{Cantat-Loray}). Thus, we may assume that $K$ is infinite and $\nu(Z)=0$ for every proper algebraic subset of $S(\C)$. 

From Theorem~\ref{thm:compact_invariant}, the parameters $A$, $B$, $C$, and $D$ are real and 
$K$ coincides with $S(\R)_c$. Consider the projection $\pi\colon S\to \R$ onto the first axis; its image is contained in $[-2,2]$. The composition of the second and third involutions acts on $S$ by 
\begin{equation}
g(x,y,z)= (x, -xz-y+B, x^2z-z+C-Bx).
\end{equation}
If the fiber $S(\R)_{x_0}:=\pi^{-1}\{x_0\}$ is non-empty and smooth, then it is an ellipse, it is diffeomorphic to a circle, and $g$ acts on it as a rotation, the angle of which is given by $2cos(\pi \theta)=x_0$ (see~\cite[\S 5]{Goldman:Annals} and~\cite{Cantat-Loray}).
Thus, if $x_0$ is not in $2\cos(2\pi \Q)$, $g$ preserves a unique probability measure $\lambda_{x_0}$ on $S(\R)_{x_0}$: this measure is smooth, it corresponds to the Lebesgue measure on the circle. 

Consider the projection $\nu_x:=\pi_*(\nu)$. This measure does not have any atom. Thus, if we desintegrate $\nu$ with respect to the fibration $\pi$, the conditional measures coincide with the measures $\lambda_{x_0}$ because they are $g$-invariant. Now, projecting on the second axis and applying the same argument, we obtain a smooth measure $\nu_{y}$ on the base, with smooth conditional measure. This shows that $\nu$ is absolutely continuous with respect to the symplectic measure $\nu_\R$, and by the  ergodicity theorem of~\cite{Goldman:Geometry_Topology, Pickrell-Xia:II}, $\nu$ is equal to $\nu_\R$.
\end{proof}


\section{Dynamics at infinity}\label{par:Dynamics at infinity}

We extend   $\Gamma_S$ as a group of birational transformations of  $\Sbar$, the completion of $S$ in $\P^3$, and study its dynamics near the hyperplane at infinity. 

\begin{rem}\label{rem:local_fields_1}
We present all computations and estimates for the complex surface $\SC$. On the other hand, the same computations work equally well on $S(\mathbb{K})$ if $\mathbb{K}$ is a local field, for some non-trivial absolute value $\vert \cdot\vert$. In that case, $\parallel \cdot \parallel$ must be  a norm on ${\mathbb{K}}^3$ which is compatible with $\vert \cdot \vert$. Good examples to keep in mind are ${\mathbb{K}}=\Q_p$, or ${\mathbb{K}}={\mathbb{F}}_p[t]$, $p$ a prime.
\end{rem}

\subsection{Taylor expansion at infinity}\label{ss: action si}
  We add a variable $w$ to get homogeneous coordinates $[x:y:z:w]$ on $\P^3$; then, the hyperplane at infinity is $\{w=0\}$, and its intersection with $\Sbar$  is the triangle $\partial S=\Sbar\setminus S$ given by the equation $xyz=0$. The vertices of this triangle are $p_1=[1:0:0:0]$, $p_2=[0:1:0:0]$, and $p_3=[0:0:1:0]$. Near $p_3$, we can use the local coordinates $x,y,w$, with $z=1$. The equation of $S$ becomes, locally, 
\begin{equation}
(1+x^2+y^2)w+xy=(Ax+By+C)w^2 +Dw^3.
\end{equation}
Thus, $S$ is smooth near $p_3$, its tangent plane at $p_3$ is the ``horizontal'' plane $w=0$, and $S$ is locally the graph of a 
function $\varphi_3\colon (x,y)\mapsto w=\varphi_3(x,y)$. Since $\Sbar$ intersects the plane $\{w=0\}$ on the coordinate axis, $\varphi_3$ is 
divisible by $xy$; then, its Taylor expansion starts by
\begin{align*}
\frac{\varphi_3(x,y)}{-xy}&=1 -( x^2+Cxy+y^2) -(Ax+By)xy  \\
\notag& \quad \quad  +(x^4+3Cx^3y+(2-D+2C^2)x^2y^2+3Cxy^3+y^4)+\ldots
\end{align*}
Hence 
\begin{equation}\label{eq: phi3}
\frac{\varphi_3(x,y)}{-xy} =1 -( x^2+Cxy+y^2) -(Ax+By)xy + O(\parallel(x,y)\parallel^4),
\end{equation}
where the norm is   any (fixed) norm in the local $(x,y)$-coordinates. The coefficients in this expansion are polynomial functions of $(A,B,C,D)$ with integer coefficients. 

Similarly, we can write locally $\Sbar$ as a graph $(y,z)\mapsto w=\varphi_1(y,z)$ near $p_1$, and as a graph $(z,x)\mapsto w=\varphi_2(z,x)$ near $p_2$ (note the cyclic permutation of the local coordinates); we shall use the notation 
\begin{itemize}
\item $(u_1,v_1, \varphi_1(u_1,v_1))=(y,z,\varphi_1(y,z))$ near the point $p_1$,
\item $(u_2,v_2, \varphi_2(u_2,v_2))=(z,x,\varphi_2(z,x))$ near the point $p_2$, and
\item $(u_3,v_3, \varphi_3(u_3,v_3))=(x,y, \varphi_3(x,y))$ near the point $p_3$.
\end{itemize}
More precisely, $(u_i,v_i, w)$ are local coordinates near $p_i$, the projection from $\Sbar$ to the $(u_i,v_i)$-plane being a local diffeomorphism. 

We can now compute the Taylor expansion of the involutions $s_x$, $s_y$, and $s_z$ near the points $p_i$ where they are well defined. 
For instance, at $p_3$ both $s_x$ and $s_y$ are well defined, while $s_z$ has an indeterminacy. From Equation~\eqref{eq: vieta}, \begin{equation}\label{eq: sxp}
 s_x[x:y:z:w] = [-xw-yz+A w^2 : yw : zw : w^2] 
 \end{equation}
Since $s_x$ maps $p_3 = [0:0:1:0]$ to $p_1 = [1:0:0:0]$, we use the local coordinates $(u_3,v_3)=(x,y)$ and 
$(u_1,v_1)=(y,z)$. 
Combining Equations~\eqref{eq: phi3} and~\eqref{eq: sxp} with $w=\varphi_3(x,y)$, we can write $s_x(x,y)=(y',z')$ with  
\begin{align}
y' &= xy[1-Cxy-y^2-Bxy^2]+\cdots\\
z' &= x[1-Cxy-y^2 -Bxy^2-x^2(x^2+2Cxy+2y^2)]+\cdots
 \end{align}
 where the dots correspond to terms of degree at least $6$.
Thus if we start with the point of coordinates $(u_3,v_3)$, that is with the point $[u_3:v_3:1: \varphi_3(u_3,v_3)]\in S$, the coordinates $(u_1',v_1')$ of its image by $s_x$ satisfy
\begin{align}
u_1' &= u_3v_3[1+O(\parallel (u_3,v_3)\parallel^2)]\\
v_1'&= u_3[1+O(\parallel (u_3,v_3)\parallel^2)]. \end{align}
This is not a surprise, indeed locally $s_x$ is the blow down of the axis $\{u_3=0\}$. 
A similar computation can be done near each of the vertices of the triangle $\partial S$. To summarize it, we introduce the following two matrices. 
\begin{equation}
A := \begin{pmatrix} 0 & 1 \\ 1 & 1 \end{pmatrix} \ , \  B := \begin{pmatrix} 1 & 1 \\ 1 & 0 \end{pmatrix} 
\end{equation}
We denote by $(u,v)\mapsto (u,v)^M$ the monomial action of a matrix $M\in \Mat_2(\Z)$; for instance, $(u,v)^A=(v,uv)$ and $(u,v) ^B=(uv, u)$.

\begin{figure}[h] 
\resizebox{.98\textwidth}{!}{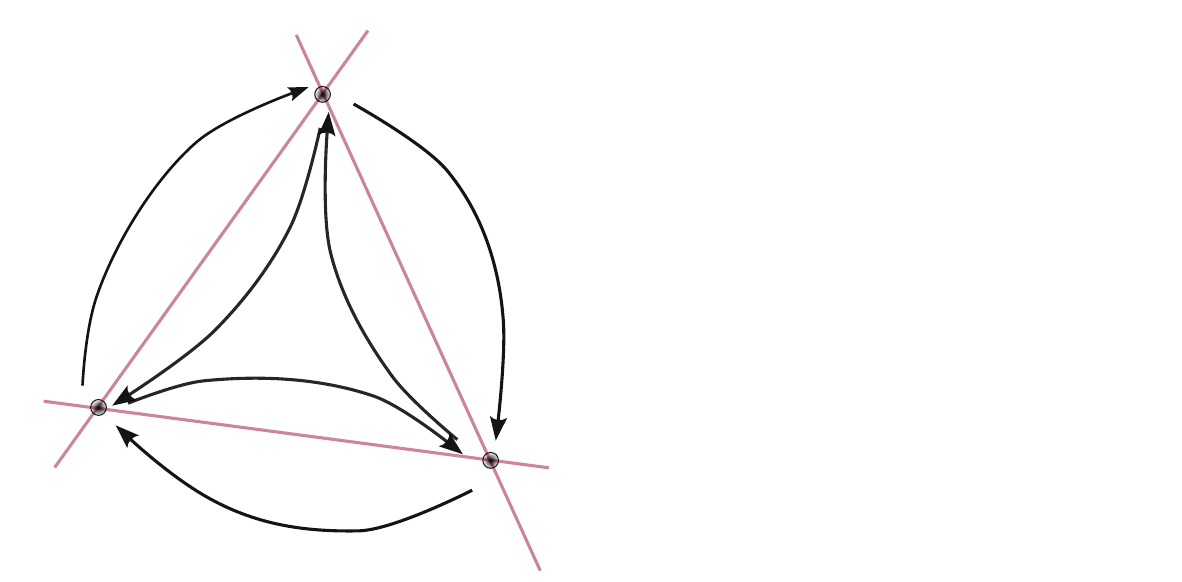}
\caption{{\small{ 
{\bf{On the left}}, one sees the triangle at infinity $\partial S$ with the three vertices corresponding to the indeterminacy points of the involutions. For instance, $s_x$ maps the line $\{w=0=x\}$ to the point $[1:0:0:0]$, and locally near $[0:1:0:0]$ it behaves like the monomial map $(u_2,v_2)^A$. {\bf{On the right}}, one sees the two types of open sets used to confine the dynamics near infinity: small neighborhoods $W_i$ near the vertices, in green, small neighborhoods $V$ around the edges; the neighborhoods of type $V$ depend on the words. }}}
 \label{fig:triangle} 
\end{figure}

\begin{pro}\label{pro:monomial_at_infinity}
In the local coordinates $(u_1,v_1)$ near the point $p_1\in \Sbar$, $(u_2,v_2)$ near $p_2$, and $(u_3,v_3)$ near $p_3$, the involution $s_x$ acts by
\begin{align*}
s_x(u_2,v_2)&=(v_2(1+O(\parallel (u_2,v_2)\parallel^2)), u_2v_2(1+O(\parallel (u_2,v_2)\parallel^2))\\
s_x(u_3,v_3)&=(u_3v_3(1+O(\parallel (u_3,v_3)\parallel^2),v_3(1+O(\parallel (u_3,v_3)\parallel^2)).
\end{align*}
Thus, $s_x$ acts as the monomial map $(u_2,v_2)^A$ near $p_2$ and as $(u_3,v_3)^B$ near $p_3$, up to multiplication by functions  of 
type $1+O(\parallel (u_i,v_i)\parallel^2)$. 

Similar formulas hold for $s_y$ and $s_z$, where the choice of $A$ or $B$ in the monomial
expansion is given on Figure~\ref{fig:triangle}. \end{pro}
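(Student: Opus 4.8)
The statement is essentially a bookkeeping consequence of the explicit Taylor expansions already set up for $s_x$ near $p_3$ (Equation~\eqref{eq: phi3} and the displayed formulas leading to $u_1' = u_3v_3(1+O(\|(u_3,v_3)\|^2))$, $v_1' = u_3(1+O(\|(u_3,v_3)\|^2))$), together with the symmetry of the construction. First I would observe that the computation already carried out in the text for $s_x$ near $p_3$ is exactly the assertion $s_x(u_3,v_3) = (u_3v_3(1+O(\cdots)), v_3(1+O(\cdots)))$, i.e.\ the monomial map $(u_3,v_3)^B$ up to a unit factor of type $1+O(\|(u_3,v_3)\|^2)$; so one entry of the proposition is free. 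The key point is that the error terms are genuinely of order $2$ and not order $1$: this is forced by the fact that $\varphi_3$ is divisible by $xy$, so that $\varphi_3/(-xy) = 1 + O(\|(x,y)\|^2)$ has no linear term, and by the homogeneity of $s_x$ in the homogeneous coordinates $[x:y:z:w]$.

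Second I would treat $s_x$ near $p_2 = [0:1:0:0]$ by the same method: $s_x$ sends $p_2$ to $p_2$ (since $s_x[0:1:0:0] = [0:1:0:0]$ from Equation~\eqref{eq: sxp}), so one works in the single chart $(u_2,v_2) = (z,x)$ with $y=1$, writes $\Sbar$ locally as $w = \varphi_2(z,x)$ with $\varphi_2$ divisible by $zx$, substitutes into \eqref{eq: sxp}, and reads off that the leading behaviour is $(z,x)\mapsto (x \cdot(\text{unit}), zx\cdot(\text{unit}))$, i.e.\ the monomial map $(u_2,v_2)^A$ up to a $1+O(\|(u_2,v_2)\|^2)$ factor. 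The verification that the unit is $1+O(\|\cdot\|^2)$ rather than merely $1+O(\|\cdot\|)$ is again the only thing requiring care, and it follows from the same two facts (divisibility of $\varphi_2$ by $zx$, homogeneity of $s_x$).

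Finally, for $s_y$ and $s_z$ I would invoke the cyclic symmetry of the whole picture: the defining equation~\eqref{eq:Equation_SABCD} and the Vieta involutions~\eqref{eq: vieta} are equivariant under the cyclic permutation $(x,y,z)\mapsto (y,z,x)$, $(A,B,C)\mapsto(B,C,A)$, which also cyclically permutes $(p_1,p_2,p_3)$ and the chosen local coordinates $(u_i,v_i)$. Hence the expansions for $s_y$ near $p_1,p_3$ and for $s_z$ near $p_1,p_2$ are obtained from those for $s_x$ by relabeling; whether the relevant monomial matrix is $A$ or $B$ at a given vertex is exactly the data recorded in Figure~\ref{fig:triangle} (it depends on which of the two edges through the vertex is contracted and which is the image edge). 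The only genuine obstacle is the bookkeeping in the previous paragraphs---keeping track of which chart maps to which and confirming that each error is quadratic---so I would present the $s_x$-near-$p_2$ computation in a line or two and then assert the rest by symmetry, since all the substantive content is already in Equations~\eqref{eq: phi3}--\eqref{eq: sxp}.
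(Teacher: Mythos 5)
Your overall strategy --- read the expansions off the homogeneous formula \eqref{eq: sxp} combined with the graph representations $w=\varphi_i(u_i,v_i)$, do one vertex explicitly, and obtain $s_y$ and $s_z$ by the cyclic symmetry $(x,y,z;A,B,C)\mapsto(y,z,x;B,C,A)$ --- is the same as the paper's. But your treatment of $s_x$ near $p_2$ rests on a false premise. You assert that $s_x$ fixes $p_2=[0:1:0:0]$; in fact \eqref{eq: sxp} gives $[0:0:0:0]$ there, and the correct resolution on $\Sbar$ is that $s_x$ contracts the whole edge $\{x=0=w\}$ of $\partial S$ --- which contains both $p_2$ and $p_3$ --- onto $p_1$ (this is exactly what the caption of Figure~\ref{fig:triangle} and the blow-down remark record). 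So the computation near $p_2$ cannot be done ``in the single chart $(u_2,v_2)=(z,x)$'': the source chart is at $p_2$, but the image must be read in the chart $(u_1,v_1)=(y,z)$ at $p_1$, just as in the $p_3$ case. Concretely, the image of $[x:1:z:\varphi_2(z,x)]$ is $[-z(1+O(\parallel(z,x)\parallel^2)):w:zw:w^2]$ with $w=-zx(1+O(\parallel(z,x)\parallel^2))$; normalizing by the \emph{first} homogeneous coordinate gives $(u_1',v_1')=(x(1+O(\cdot)),\,zx(1+O(\cdot)))=(v_2,u_2v_2)$ up to units, which is the claimed $(u_2,v_2)^A$. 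Had you actually normalized in the $p_2$-chart (dividing by the $y$-coordinate $w$), the resulting $x$-coordinate $(-z)/w\sim 1/x$ would diverge; the formula you ``read off'' is only obtainable in the $p_1$-chart, so as written your procedure would fail.

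Two smaller points. The quadratic (rather than linear) size of the error terms is not forced by divisibility of $\varphi_i$ by the product of the coordinates alone: divisibility gives $\varphi_3=xy\,\psi$ with $\psi(0,0)=-1$, and one must substitute back into the equation of $\Sbar$ to check that $\psi$ has no linear term, which is precisely what \eqref{eq: phi3} records. Also, since $(u,v)^B=(uv,u)$, the second coordinate of $s_x(u_3,v_3)$ should be $u_3(1+O(\cdots))$, consistent with the displayed computation $v_1'=u_3[1+O(\parallel(u_3,v_3)\parallel^2)]$ preceding the proposition; the $v_3$ appearing in the statement (which you reproduced as if it matched that computation) is a typo.
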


Note that if we set $\alpha_i=-\log\vert u_i\vert$ and $\beta_i=-\log\vert v_i\vert$, then the action of $s_x$ near $p_2$ transforms $(\alpha_2,\beta_2)$
in $(\alpha_1',\beta_1')$ with 
\begin{align}
\label{eq:log_monomial1}\alpha_1'&=\beta_2+O(\exp(-2\parallel (\alpha_2,\beta_2)\parallel))\\
\label{eq:log_monomial2}\beta_1'&=\alpha_2+\beta_2+ O(\exp(-2\parallel (\alpha_2,\beta_2)\parallel)).
\end{align}
Here, as $(u_2,v_2)$ approaches $\partial S$ in $S$, then $(\alpha_2,\beta_2)$ goes to $\infty$ in $\R^2_+$.

\subsection{Dynamics of the semi-group $\langle A,B \rangle_{s.g.}$ on $\R_+^2$}\label{ss:linear_dynamics_A_B}
The last paragraph shows that, to describe the action of $\Gamma_S$ near $\partial S$, we must look at the linear dynamics of the semi-group generated by $A$ and $B$ on $\R^2_+$. This semi-group is the free semi-group on two generators $\langle A, B\rangle_{s.g.}$; its elements are uniquely represented by words in $A$, and $B$, such as $w(A,B)=A A  B A  B B A A A B$ (we do not distinguish a word in $A$ and $B$ from the corresponding element in $\langle A, B\rangle_{s.g.}$); the length of such a word is just its number of letters; a sequence of words $(w_n)$ is said to be increasing if $w_{n+1}=v_n \cdot w_n$ is the concatenation of $w_n$ with a word $v_n$ of positive length. In the next lemma, $\parallel \cdot\parallel_1$ is the $\ell_1$-norm on $\R^2$. 

\begin{lem}\label{lem:first_linear_estimate}
Let $(\alpha, \beta)$ be a point of $\R^2_+$. Let $(w_n)$ be an increasing sequence of words in the semi-group $\langle A,B \rangle_{s.g.}$. 
Then, given any point $(\alpha, \beta)\neq (0,0)$ in $\R^2_+$, 
\begin{itemize}
\item either $\parallel w_n(\alpha,\beta)\parallel_1$ goes to $+\infty$ as $n$ goes to $+\infty$,
\item or $\alpha=0$ and $w_n= B^{\epsilon(n)}(AB)^{\ell(n)}$ 
for some increasing sequence $\ell(n)\in \Z_+$ and some sequence $\epsilon(n)\in\{0,1\}$,
\item or $\beta=0$ and $w_n=A^{\epsilon(n)}(BA)^{\ell(n)}$ 
for some increasing sequence $\ell(n)\in \Z_+$ and some sequence $\epsilon(n)\in\{0,1\}$.
\end{itemize}
If $\alpha\beta>0$, the orbit of $(w_n(\alpha, \beta))$ of $(\alpha,\beta)$ in $\R^2_+$ is discrete and contains at most $\min(\alpha,\beta)^{-1}R$ points at distance $\leq R$ from the origin.
\end{lem}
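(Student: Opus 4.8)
The plan is to study the action of the free semigroup $\langle A, B\rangle_{s.g.}$ on $\R^2_+$ directly, exploiting that both $A$ and $B$ are non-negative integer matrices preserving the cone $\R^2_+$, with $\|M v\|_1 \geq \|v\|_1$ for $v \in \R^2_+$ when $M \in \{A, B\}$ (since each column of $A$ and of $B$ has $\ell_1$-norm at least $1$, indeed $A$ has column sums $(1,2)$ and $B$ has column sums $(2,1)$). So $\|w_n(\alpha,\beta)\|_1$ is non-decreasing along any increasing sequence of words; the real question is when it stays bounded, i.e. eventually constant.

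First I would observe that $\|w_{n+1}(v)\|_1 = \|w_n(v)\|_1$ forces the vector $w_n(v)$ to lie in the kernel of the ``excess'' linear functional for each extra letter applied: writing $v_n$ for the letters added between step $n$ and $n+1$, we need $\|M w_n(v)\|_1 = \|w_n(v)\|_1$ for $M$ the first letter of $v_n$. A direct computation shows $\|A(\alpha,\beta)\|_1 = \|(\beta,\alpha+\beta)\|_1 = \alpha + 2\beta$, so equality with $\alpha+\beta$ holds iff $\beta = 0$; symmetrically $\|B(\alpha,\beta)\|_1 = 2\alpha+\beta$, equal to $\alpha+\beta$ iff $\alpha = 0$. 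Thus if $\|w_n(v)\|_1$ is to stay bounded, every vector in the orbit must lie on one of the coordinate axes, and moreover the next letter applied must be $A$ if we are on $\{\beta = 0\}$ and $B$ if we are on $\{\alpha = 0\}$. Then I track how the axes transform: $A$ sends $(\alpha, 0) \mapsto (0, \alpha)$ (landing on the other axis) and $B$ sends $(0,\beta)\mapsto(0,\beta)$ (fixing that axis); symmetrically $B$ sends $(0,\beta)\mapsto(\beta,0)$? — here I must be careful: $(u,v)^B = (uv,u)$ at the group level corresponds to the matrix $B$ acting as $(\alpha,\beta)\mapsto(\alpha+\beta,\alpha)$, so $B(0,\beta) = (\beta, 0)$ and $A(0,\beta) = (\beta,\beta)$ which has positive $\beta$-excess and would blow up. Combining these transition rules, starting from $(\alpha, \beta) = (\alpha, 0)$ the only non-escaping itineraries are: stay put by applying $B$ (no — $B(\alpha,0) = (\alpha,\alpha)$, escapes), so one must apply $A$, landing at $(0,\alpha)$, then apply $B$, landing at $(\alpha, 0)$, etc. So the bounded orbits are exactly the words $w_n = A^{\epsilon(n)}(BA)^{\ell(n)}$ when $\beta = 0$, and by the evident symmetry $w_n = B^{\epsilon(n)}(AB)^{\ell(n)}$ when $\alpha = 0$, matching the statement. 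The case $(\alpha,\beta) = (0,0)$ is excluded, and if both coordinates are positive we fall into the first alternative.

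For the final quantitative claim, assume $\alpha\beta > 0$. I would argue that the word-length grows linearly in $\log$ of the distance is the wrong scale — rather, since each letter multiplies $\|\cdot\|_1$ by a factor that is bounded below by a constant strictly greater than $1$ \emph{as soon as both coordinates are positive} (which they remain, since $A$ and $B$ both map the open cone into itself when started in the open cone): indeed on the open cone $\|A(\alpha,\beta)\|_1 = \alpha + 2\beta \geq \tfrac{3}{2}(\alpha+\beta)$ when $\beta \geq \alpha/2$ — no, this needs more care, so instead I use that $\min(\alpha,\beta)$ is non-decreasing along the orbit (check: $A(\alpha,\beta) = (\beta, \alpha+\beta)$ has $\min = \min(\beta, \alpha+\beta) = \beta \geq \min(\alpha,\beta)$; similarly for $B$), hence every orbit point has both coordinates $\geq \min(\alpha,\beta) =: m > 0$; then applying one more letter increases $\|\cdot\|_1$ by at least $m$ (the excess functional evaluates to $\beta \geq m$ or $\alpha \geq m$). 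So after $k$ letters, $\|w_k(\alpha,\beta)\|_1 \geq \|(\alpha,\beta)\|_1 + km \geq km$; hence the number of orbit points within distance $R$ of the origin is at most $R/m + O(1) \leq \min(\alpha,\beta)^{-1} R$ up to the additive constant (which one absorbs, or states with a harmless adjustment). Discreteness follows since distinct words in the free semigroup give distinct orbit points once we are in the open cone, as the map $w \mapsto w(\alpha,\beta)$ is injective there (two words agreeing on an open-cone vector must be equal, by reading off letters via the transition combinatorics, or simply because the monomial action is faithful on a Zariski-dense set).

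The main obstacle I anticipate is the bookkeeping in the degenerate cases: carefully verifying that starting on an axis, the \emph{only} words keeping the orbit bounded are the alternating ones, requires checking that applying the ``wrong'' letter (or applying the right letter but then deviating) immediately produces a vector with a strictly positive excess functional that thereafter forces escape — and in particular that once a vector acquires two positive coordinates, it cannot return to an axis (clear, since $A$ and $B$ have strictly positive determinant... actually $\det A = -1$, $\det B = -1$, but they have no zero entries except the single $0$, and one checks $A$ and $B$ each map the open cone strictly inside itself, so no orbit point on the open cone ever lands on the boundary). Getting the constant exactly $\min(\alpha,\beta)^{-1}R$ rather than with an additive slack is a minor cosmetic point I would handle by noting the orbit already starts at distance $\geq \sqrt{2}\min(\alpha,\beta)$ from the origin in $\ell_1$, or simply by stating the bound for $R$ large and absorbing constants.
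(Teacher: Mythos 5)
Your proposal is correct and follows essentially the same route as the paper: the key inequality $\parallel U(\alpha,\beta)\parallel_1\geq \parallel(\alpha,\beta)\parallel_1+\min(\alpha,\beta)$ for $U\in\{A,B\}$, the observation that boundedness forces the orbit to remain on the boundary of the cone (with the alternating-word combinatorics on the axes), and the monotonicity of $\min(\alpha_n,\beta_n)$ for the counting bound. The paper's proof is just a terser version of the same argument, and your extra verifications (preservation of the open cone, the transition rules $A(\alpha,0)=(0,\alpha)$, $B(0,\beta)=(\beta,0)$) are exactly the details it leaves implicit.
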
 

\begin{proof} For any $(\alpha, \beta)\in \R^2_+$ and for $U\in \{A,B\}$ we have   $\parallel U(\alpha, \beta)\parallel_1\geq \parallel (\alpha,\beta)\parallel_1+\min(\alpha,\beta)$. Thus, if $w_m(\alpha,\beta)$ is in the interior of $\R^2_+$ for some $m\geq 1$, then $\parallel w_n(\alpha,\beta)\parallel_1\geq a (n-m)$ for some $a>0$ and all $n$. Now, $w_n(\alpha,\beta)$ stays permanently on the boundary of $\R^2_+$ if and only if $\alpha=0$ (resp. $\beta=0$) and  $w_n$ is an alternating sequence of $A$ and $B$, as stated in the lemma. 
\end{proof}


\begin{lem}\label{lem:second_linear_estimate}
Let $(\alpha, \beta)$ be a point of $\R^2_+$. Let $(w_n)$ be an increasing sequence of words in the semi-group $\langle A,B \rangle_{s.g.}$. 
Let $(\alpha, \beta)$ be a point in $\R^2_+$ such that $\alpha\beta\neq 0$. Denote by $(\alpha_n,\beta_n)$ the coordinates of $w_n(\alpha,\beta)$.
\begin{itemize}
\item either $\min(\alpha_n,\beta_n)$ goes to $+\infty$ as $n$ goes to $+\infty$,
\item or there is an index $n_0$ such that, for all $n\geq n_0$, 
$$\min(\alpha_{n},\beta_n)=\alpha_{n_0}\;\; {\text{and}} \;\;w_n= B^{\epsilon(n)}(AB)^{\ell(n)}w_{n_0}$$
for an increasing sequence $\ell(n)\in \Z_+$ and some sequence $\epsilon(n)\in\{0,1\}$.
\end{itemize}
\end{lem}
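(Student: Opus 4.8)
The plan is to exploit that neither generator decreases the smallest coordinate, and then to analyse rigidly the regime in which that smallest coordinate fails to escape to infinity. First I would record the elementary identities $\min(A(\alpha,\beta))=\beta$ and $\min(B(\alpha,\beta))=\alpha$ on $\R_+^2$; in particular $\min(U(\alpha,\beta))\ge \min(\alpha,\beta)$ for $U\in\{A,B\}$, so $n\mapsto\min(\alpha_n,\beta_n)$ is non-decreasing, and applying the inequality $\parallel U(\alpha,\beta)\parallel_1\ge\parallel(\alpha,\beta)\parallel_1+\min(\alpha,\beta)$ from the proof of Lemma~\ref{lem:first_linear_estimate} letter by letter along each increment $v_n$ gives $\parallel w_n(\alpha,\beta)\parallel_1\to+\infty$ (here $\min(\alpha,\beta)>0$). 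If $\min(\alpha_n,\beta_n)$ is unbounded it then tends to $+\infty$, the first alternative; so assume it is bounded, say by $L$. I claim it is then eventually constant: since $\parallel\cdot\parallel_1$ is also non-decreasing under $A$ and $B$, a single letter $U$ that strictly increases the minimum, applied to a point $Q$ occurring after $w_n(\alpha,\beta)$, makes the minimum jump to $\max(Q)=\parallel Q\parallel_1-\min(Q)\ge\parallel w_n(\alpha,\beta)\parallel_1-L$, while $\max(Q)\le L$ since the minimum never exceeds $L$; this forces $\parallel w_n(\alpha,\beta)\parallel_1\le 2L$, which by the previous step happens for finitely many $n$ only. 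Hence there is $N$ with $\min(\alpha_n,\beta_n)=m$ for all $n\ge N$, and $m>0$, so all coordinates stay $\ge m>0$ from then on.

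The heart of the proof is the following rigidity in the constant regime. If $P=(P_1,P_2)$ has $\min(P)=m$ and $U\in\{A,B\}$ preserves this value, then either $U=A$ with $P_2=m$, in which case $A(P)=(m,P_1+m)$ realizes its minimum (and, as $P_1\ge m>0$, realizes it only) by the first coordinate, or $U=B$ with $P_1=m$, in which case $B(P)=(P_2+m,m)$ realizes its minimum only by the second coordinate; moreover no letter applied in this regime can produce the point $(m,m)$, so the only possible occurrence of $(m,m)$ is the initial point $(\alpha_N,\beta_N)$ itself. Because the minimum is monotone, every single letter applied after time $N$ must preserve the value $m$; therefore, past the possible occurrence of $(m,m)$, the coordinate realizing $m$ alternates strictly at each letter, and the letter to be applied is \emph{forced} by this position — $B$ when the first coordinate realizes $m$, $A$ when the second one does. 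Consequently the entire tail of applied letters is a suffix of one of the two periodic words $\cdots ABAB$ and $\cdots BABA$.

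To conclude I would choose $n_0\ge N$ to be the first index at which the minimum of $(\alpha_{n_0},\beta_{n_0})$ is realized by the first coordinate, so that $\min(\alpha_{n_0},\beta_{n_0})=\alpha_{n_0}=m$; such an $n_0$ exists — and in the mirror situation where, along the prescribed increments, the minimum stays permanently on the second coordinate, one reduces to the present case by the coordinate exchange $(u,v)\mapsto(v,u)$, which conjugates $A$ to $B$. By the forced alternation, the letters applied after time $n_0$ are $B,A,B,A,\dots$ in the order of action; writing $s_n=|w_n|-|w_{n_0}|$, which for $n>n_0$ is a strictly increasing sequence of positive integers, the word prepended to $w_{n_0}$ is exactly this length-$s_n$ initial alternating block, whence $w_n=B^{\epsilon(n)}(AB)^{\ell(n)}w_{n_0}$ with $\epsilon(n)=s_n\bmod 2\in\{0,1\}$ and $\ell(n)=\lfloor s_n/2\rfloor\in\Z_+$, the latter increasing to $+\infty$; and $\min(\alpha_n,\beta_n)=m=\alpha_{n_0}$ for all $n\ge n_0$, which is the second alternative.

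I expect the main obstacle to be the bookkeeping of which coordinate realizes the minimum through the possibly multi-letter increments $v_n$ (the rigidity of the preceding paragraph together with the correct normalization of $n_0$): this is precisely what simultaneously pins down both the equality $\min(\alpha_n,\beta_n)=\alpha_{n_0}$ and the exact alternating shape $B^{\epsilon(n)}(AB)^{\ell(n)}$ of $w_n$, whereas the remaining ingredients — monotonicity of $\min$ and the escape of $\parallel\cdot\parallel_1$ to infinity — are soft and essentially inherited from Lemma~\ref{lem:first_linear_estimate}.
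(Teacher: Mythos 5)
Your overall strategy is the same as the paper's (which disposes of this lemma in two lines by reference to Lemma~\ref{lem:first_linear_estimate}): monotonicity of $\min$ under $A$ and $B$, growth of $\parallel\cdot\parallel_1$, and the rigidity forcing an alternating tail. Your detailed version of the rigidity step is correct: $\min(A(P))=P_2$ and $\min(B(P))=P_1$, so a letter that strictly increases the minimum sends it to $\max(Q)$, which combined with $\parallel w_n(\alpha,\beta)\parallel_1\to\infty$ shows the minimum is eventually constant; and in the constant regime the applied letter and the coordinate carrying the minimum are forced to alternate. All of this is sound and is a genuine fleshing-out of what the paper leaves implicit.

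The weak point is your endgame. An index $n_0$ of the \emph{given} sequence at which the minimum sits on the first coordinate need not exist: if every increment $v_n$ has even length, the minimum can sit on the second coordinate at every $w_n$, $n\geq N$ (e.g.\ $(\alpha,\beta)=(2,1)$ and $w_n=(BA)^n$, so that $\beta_n=1<\alpha_n$ for all $n$). Your proposed remedy --- conjugating by the coordinate exchange $T(u,v)=(v,u)$ --- does not repair this: $T$ conjugates the situation into one satisfying the \emph{mirrored} conclusion, i.e.\ it shows $w_n=A^{\epsilon(n)}(BA)^{\ell(n)}w_{n_0}$ with $\min(\alpha_n,\beta_n)=\beta_{n_0}$, which is not the single form asserted by the lemma. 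The paper's fix is different and is the one you need: rewrite $(BA)^{\ell}=B(AB)^{\ell-1}A$ and absorb the trailing $A$ into the base word, i.e.\ take as base point the orbit point immediately after the first $A$ of the alternating tail --- at that point the minimum provably sits on the first coordinate, and every later $w_n$ is of the form $B^{\epsilon}(AB)^{\ell}$ times that base word. (This implicitly reads $w_{n_0}$ as a word occurring along the letter-by-letter orbit rather than necessarily a term of the sequence $(w_n)$; the paper's own statement is loose on exactly this point, and the distinction is harmless in the applications, but your write-up should make the absorption step explicit rather than appeal to the symmetry $A\leftrightarrow B$.)
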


The proof is the same as for the previous lemma. The reason why there is only one exceptional case is that we can change 
a sequence of type $(BA)^{\ell(n)}$ into a sequence of type $B(AB)^{\ell(n)-1}v$ with $v=A$ and concatenate $v$ with $w_{n_0}$.

Now, we consider the following process, acting again on $\R^2$.  
An increasing sequence of words $w_n=U_{L(n)}U_{L(n-1)}\cdots U_2U_1$ is given, but now the $U_k$ are not in $\{A,B\}$; each $U_k$ is a small (non-linear) perturbation of $A$ or $B$, of type 
\begin{equation}
U_k(\alpha,\beta)=V_k(\alpha,\beta)+P_k(\alpha,\beta)
\end{equation} 
where $V_k\in\{A,B\}$ and $\parallel P_k(\alpha,\beta)\parallel_1\leq C\exp(-2\parallel (\alpha,\beta)\parallel_1)$ 
for some fixed constant $C$ (that does not depend on $(\alpha, \beta)$ or $(w_n)$).

\begin{lem}\label{lem:perturbed_linear_dynamics} Let $R$ be a positive number such that $R\geq 2C\exp(-2R)$. 
 If $(\alpha,\beta)$ satisfies $\alpha\geq R$ and $\beta\geq R$, then for any sequence of words as above, we have
 $$ \parallel w_n(\alpha,\beta)\parallel_1\geq \parallel (\alpha,\beta)\parallel_1 + \frac{R}{2}n.$$
 In particular, $w_n(\alpha,\beta)$ goes to $\infty$ with $n$. Moreover, each coordinate of $w_n(\alpha,\beta)$ goes 
 to $+\infty$, except if there is an integer $n_0$ such that, for all $n\geq n_0$,
 $$w_n = B^{\epsilon(n)}(AB)^{\ell(n)}w_{n_0}$$ 
 for some increasing sequence $\ell(n)\in \Z_+$, some sequence $\epsilon(n)\in\{0,1\}$.
\end{lem}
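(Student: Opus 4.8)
The plan is to mimic the proof of Lemma~\ref{lem:first_linear_estimate}, keeping careful track of the perturbation terms $P_k$. First I would record the elementary one-step estimate for the linear maps: for $U\in\{A,B\}$ and any $(\alpha,\beta)\in\R_+^2$ one has $\parallel U(\alpha,\beta)\parallel_1=\alpha+2\beta$ (for $U=A$) or $2\alpha+\beta$ (for $U=B$), hence in both cases $\parallel U(\alpha,\beta)\parallel_1\geq \parallel(\alpha,\beta)\parallel_1+\min(\alpha,\beta)$, and moreover $U(\alpha,\beta)\in\R_+^2$ with each coordinate $\geq\min(\alpha,\beta)$. Then I would add the perturbation: since $\parallel P_k(\alpha,\beta)\parallel_1\leq C\exp(-2\parallel(\alpha,\beta)\parallel_1)$, the inductive claim to establish is that if $\alpha\geq R$ and $\beta\geq R$ with $R\geq 2C\exp(-2R)$, then at every step both coordinates of $w_n(\alpha,\beta)$ stay $\geq R$, and $\parallel w_n(\alpha,\beta)\parallel_1$ increases by at least $R/2$ at each step.

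The induction is as follows. Suppose $(\alpha_n,\beta_n):=w_n(\alpha,\beta)$ has both coordinates $\geq R$; then $\min(\alpha_n,\beta_n)\geq R$ and $\parallel(\alpha_n,\beta_n)\parallel_1\geq 2R\geq R$, so $\parallel P_{n+1}(\alpha_n,\beta_n)\parallel_1\leq C\exp(-2R)\leq R/2$. Applying $V_{n+1}\in\{A,B\}$ first: $V_{n+1}(\alpha_n,\beta_n)$ has both coordinates $\geq\min(\alpha_n,\beta_n)\geq R$ and $\ell_1$-norm at least $\parallel(\alpha_n,\beta_n)\parallel_1+R$. Adding $P_{n+1}(\alpha_n,\beta_n)$, whose $\ell_1$-norm is $\leq R/2$, the resulting point $(\alpha_{n+1},\beta_{n+1})$ satisfies $\parallel(\alpha_{n+1},\beta_{n+1})\parallel_1\geq \parallel(\alpha_n,\beta_n)\parallel_1+R/2$, which gives the claimed growth by telescoping. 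For the coordinatewise statement, I would invoke Lemma~\ref{lem:second_linear_estimate} applied to the linear part: since each coordinate of $V_{n+1}(\alpha_n,\beta_n)$ is $\geq R$ and the perturbation has norm $\leq R/2$, one coordinate can fail to go to $+\infty$ only if, from some index on, the \emph{linear} orbit $w_n$ itself keeps that coordinate bounded along the linear dynamics — and Lemma~\ref{lem:second_linear_estimate} says this forces $w_n=B^{\epsilon(n)}(AB)^{\ell(n)}w_{n_0}$ for large $n$; conversely in that alternating pattern the coordinate in question indeed remains comparable to its value at step $n_0$ (the perturbations being summable since the other coordinate grows linearly, so $\sum_k C\exp(-2\parallel\cdot\parallel_1)<\infty$).

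The one genuinely delicate point — and the place where I expect to spend the most care — is making the last sentence rigorous: showing that when $w_n$ has the alternating tail $B^{\epsilon(n)}(AB)^{\ell(n)}w_{n_0}$ the bounded coordinate really does stay bounded despite the accumulated perturbations, and that these are the \emph{only} sequences of words for which a coordinate stays bounded. The first half follows because along this alternating pattern the growing coordinate increases linearly, so the total perturbation $\sum_k \parallel P_k\parallel_1 \leq \sum_k C\exp(-2\parallel(\alpha_k,\beta_k)\parallel_1)$ converges, hence the bounded coordinate converges (to a finite limit $\geq R$); the second half is exactly the dichotomy of Lemma~\ref{lem:second_linear_estimate} transported through the $R/2$-margin, since the perturbation is too small to create a new way of keeping a coordinate bounded. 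I would present this as: apply Lemma~\ref{lem:second_linear_estimate} to the polygonal path traced by the linear parts $V_k$ starting from $(\alpha,\beta)$, observe that the perturbation stays within an error ball of radius $R/2<\min(\alpha_k,\beta_k)$ at each step so the two dichotomies match, and conclude.
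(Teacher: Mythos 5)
Your proposal follows exactly the route of the paper, which itself only records the one-step estimate
$\parallel V(\alpha,\beta)+P(\alpha,\beta)\parallel_1\geq\parallel(\alpha,\beta)\parallel_1+R-C\exp(-2\parallel(\alpha,\beta)\parallel_1)\geq\parallel(\alpha,\beta)\parallel_1+\tfrac{R}{2}$
and then declares the rest ``the same as for Lemmas~\ref{lem:first_linear_estimate} and~\ref{lem:second_linear_estimate}''. Your one-step computation, the telescoping, and the transport of the dichotomy of Lemma~\ref{lem:second_linear_estimate} through the perturbation (using that a repeated letter boosts $\min(\alpha_n,\beta_n)$ up to the current, linearly growing, $\ell_1$-norm, while the perturbations are summable) are all the intended argument.

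One point to repair before writing this up: your inductive invariant ``both coordinates of $w_n(\alpha,\beta)$ stay $\geq R$'' does not propagate as stated. After one step, the smaller coordinate of $V_{n+1}(\alpha_n,\beta_n)$ equals $\alpha_n$ or $\beta_n$, hence is only $\geq R$, and subtracting a perturbation of $\ell_1$-norm up to $C\exp(-2R)\leq R/2$ leaves you with a lower bound of $R/2$, not $R$ --- so the hypothesis you need at the next step ($\min\geq R$, which is what gives the $+R$ gain for the linear part) is not recovered. The paper's own write-up is equally cavalier here. The fix is to run the induction with the weaker invariant $\min(\alpha_n,\beta_n)\geq R-\sum_{k\leq n}\epsilon_k$ together with $\parallel(\alpha_n,\beta_n)\parallel_1\geq 2R+\tfrac{R}{2}n$, where $\epsilon_k\leq C\exp(-2\parallel(\alpha_{k-1},\beta_{k-1})\parallel_1)$: the simultaneously proven linear growth of the norm makes the $\epsilon_k$ decay geometrically, so $\sum_k\epsilon_k\leq C e^{-4R}/(1-e^{-R})$, which is $\leq R/2$ once $R$ is not too small (as it is in the application, where $R=R_0$ is chosen large); then $\min(\alpha_n,\beta_n)\geq R/2$ for all $n$ and the per-step gain $m_n-\epsilon_{n+1}\geq R-\sum_{k\leq n+1}\epsilon_k\geq R/2$ closes the induction. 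With that adjustment your argument, including the treatment of the exceptional alternating words, is correct and matches the paper's.
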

Indeed, for $V\in \{A,B\}$ and any map $P\colon \R^2\to \R^2$ that satisfies $\parallel P(\alpha, \beta)\parallel_1\leq C\exp(-2\parallel (\alpha,\beta)\parallel_1)$, 
we obtain 
\begin{align}
\parallel V(\alpha,\beta)+ P(\alpha, \beta)\parallel_1&\geq \parallel (\alpha,\beta)\parallel_1+R-C\exp(- 2\parallel (\alpha,\beta)\parallel_1)\\
&\geq  \parallel (\alpha,\beta)\parallel_1+\frac{R}{2}.
\end{align} 
The proof is now the same as for Lemma~\ref{lem:first_linear_estimate} and Lemma~\ref{lem:second_linear_estimate}.

\subsection{Application: dynamics at infinity} Sections~\ref{ss:linear_dynamics_A_B} and~\ref{ss: action si} can be combined as follows. Fix some open neighborhoods $W_1$, $W_2$, $W_3$ of $p_1$, $p_2$, $p_3$ in $S$ with local coordinates $(u_i,v_i)$ as in~\S~\ref{ss: action si} (neighborhoods for which Proposition~\ref{pro:monomial_at_infinity} holds). 
Set $(\alpha_i,\beta_i)=(-\log\vert u_i\vert, -\log\vert v_i\vert)$; there is a constant $R_0$ such that $\{(u_i,v_i)\, ; \; \alpha_i\geq R_0 \; {\text{and}} \; \beta_i\geq R_0\}$ is contained in $W_i$ for each index~$i$. Then, Equations~\eqref{eq:log_monomial1} and~\eqref{eq:log_monomial2}  and their siblings for other choices of involutions and local coordinates say the following. There is a constant $C$ such that if $(u_2,v_2)$ is in $W_2$, then   $s_x(u_2,v_2)$ is in $W_1$, and if we write its coordinates in $W_1$ as $(u'_1,v'_1)=s_x(u_2,v_2)$ then their logarithms $\alpha'_1=-\log\vert u_1' \vert$ and $\beta'_1=-\log\vert v_1' \vert$ satisfy  
\begin{equation}
(\alpha_1', \beta_1')=A(\alpha_2,\beta_2)+P_{1,2}(\alpha_2,\beta_2)
\end{equation}
 where $P_{1,2}$ is defined on  $[R_0,+\infty[ \times [R_0,+\infty[$ and
\begin{equation}
\parallel P_{1,2}(\alpha,\beta)\parallel_1 \leq C \exp(-2 \parallel (\alpha,\beta) \parallel_1).
\end{equation} 
 Similar formulas hold for the other involutions in the open sets $W_i$ where they are well defined; the matrices $A$ or $B$ are to be chosen as on Figure~\ref{fig:triangle}, and the perturbations $P_{i,j}$ depend on the open sets;  the constant $C$ can be chosen uniformly, {i.e.} independently of the indices.  

We say that a word $w$ in $s_x$, $s_y$, $s_z$ is {\bf{reduced}} if $s_{i_{k+1}}\neq s_{i_k}$ for  all pairs of successive letters  in $w$.
A reduced word $w_n=s_{i_{\ell(n)}}\cdots s_{i_1}$ gives rise to a sequence $s_{i_1}$, $s_{i_2}\circ s_{i_1}$, $\cdots$, $s_{i_{\ell(n)}} \circ \cdots \circ s_{i_1}$ of birational transformations of  ${\overline{S}}$, for any $S \in \Fam$, such that $s_{i_k}\circ \cdots \circ s_{i_1}$ contracts the generic point of the boundary $\partial S$ onto a vertex $p_{i_k}$ that does not coincide with the indeterminacy point of~$s_{i_{k+1}}$. We use the same notation $w_n$ for the reduced word, the element of $\Gamma$ determined by it, and the associated automorphism of $S$, for any $S$ in $\Fam$. On the other hand, we prefer to use $ s\circ s'$ to denote the composition in $\Gamma_S\subset \Aut(S)$ and $ss'$ for products in $\Gamma\simeq \Z/2\Z\star \Z/2\Z \star \Z/2\Z $.

Lemma~\ref{lem:perturbed_linear_dynamics} gives the following.

\begin{pro}\label{prop: voisinage}
Let $S$ be an element of $\Fam$.
There are disjoint open neighborhoods $W_i$ of the points $p_i$ in $S$ such that given any point $q$ in one of the $W_i$, and any increasing sequence of reduced words $w_n=s_{i_{\ell(n)}} \cdots  s_{i_1}$ in $s_x$, $s_y$, $s_z$ such   that the indeterminacy point of  $s_{i_1}$ is not in $W_i$, the sequence $w_n(q)$ stays in $W_1\cup W_2\cup W_3$ and goes exponentially fast to infinity as $n$ goes to $\infty$. 

If the three letters $s_x$, $s_y$, $s_z$ appear infinitely often in the words $w_n$ as $n$ goes to $+\infty$, then the accumulation points of $(w_n(q))$ in $\Sbar$ are contained in the three vertices of the triangle $\partial S$.
\end{pro}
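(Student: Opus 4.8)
The plan is to reduce everything to the perturbed linear dynamics on $\R_+^2$ studied in Lemma~\ref{lem:perturbed_linear_dynamics}, using the local coordinate charts near the three vertices $p_i$ and the monomial description of the involutions from Proposition~\ref{pro:monomial_at_infinity}. First I would fix, once and for all, a constant $R_0$ and open neighborhoods $W_1, W_2, W_3$ of the vertices so that $W_i$ contains the region $\{\alpha_i \ge R_0, \ \beta_i \ge R_0\}$ in the $(u_i,v_i)$-coordinates (here $\alpha_i=-\log|u_i|$, $\beta_i=-\log|v_i|$), so that $W_1, W_2, W_3$ are pairwise disjoint, and so that the estimates displayed just before the proposition hold: whenever an involution $s_*$ is well defined on $W_j$ and sends it toward $p_i$, it acts in the log-coordinates as $M(\alpha_j,\beta_j)+P_{i,j}(\alpha_j,\beta_j)$ with $M\in\{A,B\}$ chosen as on Figure~\ref{fig:triangle} and $\|P_{i,j}(\alpha,\beta)\|_1\le C\exp(-2\|(\alpha,\beta)\|_1)$. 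I also enlarge $R_0$ if needed so that $R_0 \ge 2C\exp(-2R_0)$, which is the hypothesis of Lemma~\ref{lem:perturbed_linear_dynamics}, and so that a point of $W_i$ with $\alpha_i,\beta_i \ge R_0$ stays in the ``good'' region. This is the bookkeeping that glues Section~\ref{ss: action si} to Section~\ref{ss:linear_dynamics_A_B}.

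Next I would verify the combinatorial compatibility that makes the sequence of involutions act, in coordinates, as an \emph{increasing sequence of words} in the perturbed generators $U_k\in\{A\text{-type},B\text{-type}\}$ in the sense of Lemma~\ref{lem:perturbed_linear_dynamics}. This is exactly the content of the paragraph defining reduced words: since $w_n=s_{i_{\ell(n)}}\cdots s_{i_1}$ is reduced and the indeterminacy point of $s_{i_1}$ is not the vertex $p_i$ near which $q$ sits, $s_{i_1}$ is well defined near $q$ and contracts a neighborhood of $p_i$ into a vertex $p_{i_1}$ distinct from the indeterminacy point of $s_{i_2}$; inductively $s_{i_k}\circ\cdots\circ s_{i_1}$ lands near a vertex where $s_{i_{k+1}}$ is well defined and expanding, so each step is one of the two monomial-plus-small-error maps above. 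One should check that if $q$ is already in $W_i$ with $\alpha_i,\beta_i$ large, then $\|w_n(q)\|$-coordinates satisfy $\alpha_j,\beta_j \ge R_0$ at every step — this follows because $A$ and $B$ are nonnegative integer matrices with no zero row, so they do not decrease either coordinate below $\min$ of the input, and the error terms are $\le C\exp(-2R_0)\le R_0/2$ small. Feeding this into Lemma~\ref{lem:perturbed_linear_dynamics} gives $\|w_n(q)\|_1 \ge \|q\|_1 + \tfrac{R_0}{2}n$ in log-coordinates, i.e. $w_n(q)$ stays in $W_1\cup W_2\cup W_3$ and $\max(|u_{j}|,|v_{j}|)$ at step $n$ is $\le \exp(-\tfrac{R_0}{2}n)$ up to a constant, hence $w_n(q)\to \partial S$ exponentially fast. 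For a general $q\in W_i$ (not yet deep in the good region) one applies the above after one preliminary step, or shrinks $W_i$ so that every point of $W_i$ already has $\alpha_i,\beta_i\ge R_0$; I would take the latter, cleaner route.

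For the second assertion, suppose all three letters occur infinitely often. Then the increasing sequence of words $w_n$ is \emph{not} eventually of the exceptional form $B^{\epsilon(n)}(AB)^{\ell(n)}w_{n_0}$ (nor of any of its siblings obtained by cyclically permuting the roles of $p_1,p_2,p_3$), because that form corresponds, on the level of $s_x,s_y,s_z$, to eventually alternating only two of the three involutions. Hence by the ``moreover'' clause of Lemma~\ref{lem:perturbed_linear_dynamics} each coordinate $\alpha_{j(n)},\beta_{j(n)}$ of $w_n(q)$ tends to $+\infty$, which in the original coordinates means $u_{j(n)},v_{j(n)}\to 0$: the point $w_n(q)$ converges to the vertex $p_{j(n)}$ of $\partial S$ (where $j(n)=i_{\ell(n)}$ is the index of the last applied involution). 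Therefore every accumulation point of $(w_n(q))$ in $\Sbar$ lies in $\{p_1,p_2,p_3\}$.

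The main obstacle is the second paragraph: carefully checking that the reducedness hypothesis, plus the requirement that the indeterminacy point of $s_{i_1}$ avoid $W_i$, really forces every partial composition to map the relevant $W$-chart into a $W$-chart where the next involution is holomorphic \emph{and} expanding (so that Proposition~\ref{pro:monomial_at_infinity} applies with the correct matrix), and that the non-linear error terms never push a coordinate out of $[R_0,+\infty)$. The linear-dynamics lemmas are already in hand; the work is entirely in making the chart-to-chart transition maps match the abstract setup of Lemma~\ref{lem:perturbed_linear_dynamics} uniformly over $\Fam$.
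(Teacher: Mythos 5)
Your proof follows the paper's argument exactly: the paper obtains this proposition by setting up the logarithmic coordinates $(\alpha_i,\beta_i)$ and the perturbed monomial transition maps near the vertices, and then invoking Lemma~\ref{lem:perturbed_linear_dynamics}, which is precisely your reduction; your treatment of the second assertion, identifying the exceptional case $B^{\epsilon(n)}(AB)^{\ell(n)}w_{n_0}$ with words that eventually alternate only two of the three involutions, is also the intended one. One small correction: the linear growth $\|w_n(q)\|_1=\alpha_n+\beta_n\geq\|q\|_1+\tfrac{R_0}{2}n$ controls the product $|u_jv_j|$ (hence $|w|$ and the affine norm of $w_n(q)$), not $\max(|u_j|,|v_j|)$ --- in the exceptional alternating case one of $|u_j|,|v_j|$ stays bounded away from $0$, which is exactly why the second assertion requires the ``all three letters'' hypothesis --- but your conclusion of exponential escape to infinity is still valid via the product.
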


This proposition describes the dynamics of reduced words when $q$ is in $W_1\cup W_2\cup W_3$. Now, our goal is to describe what happens when $q$ is near $\partial S$ but is not in $W_1\cup W_2\cup W_3$. 

\begin{rem} Set $L=\{z=0=w\}\subset \partial S$ and denote by $L^*$ the open subset $L\setminus \{p_1,p_2\}$. The involutions $s_x$ and $s_y$ 
are regular on $L^*$, and both of them acts by $[x:y:0:0]\mapsto [y:x:0:0]$ on $L^*$. On the other hand, there are points arbitrary close to 
$[1:-1:0:0]$ (for instance), for which the orbit under $s_x\circ s_y$ does not remain close to $L$.
\end{rem}


Consider an increasing sequence of reduced words $w_n=
s_{i_{\ell(n)}} \cdots  s_{i_1}$ in which each of $s_x$, $s_y$, and $s_z$ ultimately appears. Let $k$ be the smallest integer such that the three involutions appear among the first $k$ letters $s_{i_j}$, $1\leq j\leq k$.
There are integers $\ell\geq 1$ and $\epsilon\in\{0, 1\}$ such that 
\begin{itemize}
\item  for $n\geq k$, $w_n$ starts with a sequence of type $s_{i_3} s_{i_1}^\epsilon (s_{i_2} s_{i_1})^\ell$, and
\item $\{i_1,i_2,i_3\}=\{x,y,z\}$.
\end{itemize}
In particular, $k=2\ell + \epsilon +1$. Then, for all $n\geq k$, the birational transformation of $\Sbar$ induced by $w_n$  contracts each edge of $\partial S$
onto some vertex of $\partial S$. Thus, if $B(p_{i_1})$ is some small neighborhood of $p_{i_1}$ in $\Sbar$,  
there exists a neighborhood $V$ of $\partial S \setminus B(p_{i_1})$ in $\Sbar$ such that $w_n(V)$ is contained in $W_1\cup W_2\cup W_3$ 
for all $n\geq k$. This neighborhood depends on $B(p_{i_1})$ and on $k$. Together with Proposition~\ref{prop: voisinage}, this proves the following corollary. 

\begin{cor}\label{cor:global_dynamics_at_infinity}
Let $w_n=s_{i_{\ell(n)}}\cdots s_{i_1}$  be an increasing sequence of reduced words in   $s_x$, $s_y$, $s_z$. 
Suppose that $w_n$ involves each of these three letters if $n$ is larger than some $n_0$.
Let $B$ be a neighborhood of the indeterminacy point $p_{i_1}$. 
There exists a neighborhood $V$ of $\partial S\setminus B$ in $\Sbar$ such that, for $n\geq n_0$, 
\begin{enumerate}
\item $w_n$ contracts $\partial S\setminus \{p_{i_1}\}$ onto one of the vertices of $\partial S$,
\item $w_n$ maps $V$ into $W_1\cup W_2\cup W_3$,
\item if $q$ is any point of $V\setminus \partial S$, its orbit $(w_n(q))$ converges towards $\partial S$ in $\Sbar$. 
\end{enumerate}
Moreover, if each of the three letters $s_x$, $s_y$, $s_z$ appears infinitely often in the sequence $(s_{i_j})_{j\geq 1}$, 
the set of accumulation points of $(w_n(q))$  is contained in $\{p_1,p_2,p_3\}$. 
\end{cor}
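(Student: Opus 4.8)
The plan is to assemble Corollary~\ref{cor:global_dynamics_at_infinity} from the two ingredients already developed in this section: Proposition~\ref{prop: voisinage}, which controls the dynamics once a point has entered one of the three vertex-neighborhoods $W_i$, and the ``confinement near the edges'' discussion that immediately precedes the corollary, which produces a neighborhood $V$ of $\partial S\setminus B$ that is pushed into $W_1\cup W_2\cup W_3$ by every $w_n$ with $n\geq n_0$. So the first step is to fix, once and for all, the disjoint open neighborhoods $W_1,W_2,W_3$ furnished by Proposition~\ref{prop: voisinage}, together with the associated constants $R_0$ and $C$ from Section~\ref{ss:linear_dynamics_A_B} and~\S\ref{ss: action si}, and to recall that on $W_i$ the involutions that are regular there act, in the logarithmic coordinates $(\alpha_i,\beta_i)=(-\log|u_i|,-\log|v_i|)$, as the perturbed monomial maps of Lemma~\ref{lem:perturbed_linear_dynamics}.

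Next I would run the combinatorial normalization of the reduced word that is sketched just before the corollary statement. Since all three letters $s_x,s_y,s_z$ occur among $w_n$ for $n$ large, I let $k$ be the least index at which the three letters have all appeared among $s_{i_1},\dots,s_{i_k}$; by minimality the prefix of $w_k$ of length $k$ is forced to be of the form $s_{i_3}s_{i_1}^{\epsilon}(s_{i_2}s_{i_1})^{\ell}$ with $\{i_1,i_2,i_3\}=\{x,y,z\}$, $\ell\geq 1$, $\epsilon\in\{0,1\}$, and $k=2\ell+\epsilon+1$. The point of this normal form is that for every $n\geq k$ the birational map $w_n$ of $\Sbar$ contracts each of the three edges of the triangle $\partial S$ onto a vertex: indeed $s_{i_1}$ blows down the edge opposite to $p_{i_1}$, the alternations $s_{i_2}s_{i_1},\dots$ then drag everything off $L^\ast$-type lines, and the final $s_{i_3}$ handles the last edge — this is exactly the mechanism recorded in Proposition~\ref{pro:monomial_at_infinity} and Figure~\ref{fig:triangle}, and it gives item (1). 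Enlarging $n_0$ to $\max(n_0,k)$ costs nothing.

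Given a neighborhood $B$ of the indeterminacy point $p_{i_1}$, item (2) then follows by compactness: the set $\partial S\setminus B$ is a compact subset of $S(\C)$ on which $w_{n_0}$ (hence every $w_n$, $n\geq n_0$, by composing with further reduced syllables) is regular and maps into the \emph{open} set $W_1\cup W_2\cup W_3$, because the contracted image of each edge-piece is one of the $p_i$; hence there is an open neighborhood $V$ of $\partial S\setminus B$ with $w_{n_0}(V)\subset W_1\cup W_2\cup W_3$, and then $w_n(V)\subset W_1\cup W_2\cup W_3$ for all $n\geq n_0$ because, once inside $W_1\cup W_2\cup W_3$, Proposition~\ref{prop: voisinage} keeps the forward orbit inside that union (the tail of $w_n$ is an increasing sequence of reduced words whose first letter has indeterminacy point away from the relevant $W_i$, which is precisely the hypothesis of that proposition). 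For item (3), take $q\in V\setminus\partial S$; then $w_{n_0}(q)\in W_1\cup W_2\cup W_3$ and lies off $\partial S$, so in logarithmic coordinates it sits at a point $(\alpha_i,\beta_i)$ with $\alpha_i,\beta_i\geq R_0$ and finite; Lemma~\ref{lem:perturbed_linear_dynamics} (with $R=R_0$, after shrinking the $W_i$ if necessary so that $R_0\geq 2C\exp(-2R_0)$) shows $\|w_n(q)\|_1$ grows at least linearly, i.e. $(u_i,v_i)\to 0$, which is exactly convergence of $(w_n(q))$ towards $\partial S$ in $\Sbar$. Finally, for the last assertion, if each of $s_x,s_y,s_z$ appears infinitely often in $(s_{i_j})_{j\geq 1}$, then the ``moreover'' clause of Proposition~\ref{prop: voisinage} applies verbatim to the tail starting at $w_{n_0}(q)\in W_1\cup W_2\cup W_3$: the exceptional cases of Lemma~\ref{lem:perturbed_linear_dynamics}, in which only a single coordinate escapes, correspond to ultimately alternating words $B^{\epsilon(n)}(AB)^{\ell(n)}w_{n_0}$ and are ruled out precisely by the recurrence of all three letters; hence \emph{both} coordinates $\alpha_i,\beta_i\to+\infty$ along the subsequences for which the orbit sits in a fixed $W_i$, forcing every accumulation point of $(w_n(q))$ to be one of $p_1,p_2,p_3$.

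The only genuinely delicate point — everything else is bookkeeping with the estimates already proved — is establishing item (2): one must check that the neighborhood $V$ of $\partial S\setminus B$ can be chosen so that it is mapped into $W_1\cup W_2\cup W_3$ by \emph{all} the $w_n$ simultaneously, not just by $w_{n_0}$. This is where the normal-form analysis is essential, because it is exactly the statement that the tails $w_n\circ w_{n_0}^{-1}$ are increasing sequences of reduced words whose initial letter does not have its indeterminacy point in the $W_i$ containing $w_{n_0}(V)$; once that combinatorial compatibility is verified, the invariance of $W_1\cup W_2\cup W_3$ under such tails is handed to us by Proposition~\ref{prop: voisinage}, and the corollary closes.
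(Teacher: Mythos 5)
Your proof is correct and takes essentially the same route as the paper: the normal form $s_{i_3}s_{i_1}^{\epsilon}(s_{i_2}s_{i_1})^{\ell}$ for the prefix, the resulting contraction of every edge of $\partial S$ onto a vertex for $n\geq k$, the compactness argument producing a single $V$ that works for all $n$ at once, and Proposition~\ref{prop: voisinage} (equivalently Lemma~\ref{lem:perturbed_linear_dynamics}) to confine and push the tail of the orbit to infinity inside $W_1\cup W_2\cup W_3$. Only a slip of the pen: $\partial S\setminus B$ is a compact subset of $\Sbar$ (it lies in the hyperplane at infinity), not of $S(\C)$.
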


\begin{rem}\label{rem:global_dynamics_at_infinity}
Once we know that $(w_n(q))$ is trapped in $W_1\cup W_2\cup W_3$, we get the following: 
{\sl{if $s_{i_{\ell(n)}}$ is equal to $s_x$, then $w_n(q)$ is in $W_1$; thus, if each of the involutions appears infinitely often in the sequence of letters $s_{i_{\ell(n)}}$, the accumulation point of $(w_n(q))$ coincides with $\{p_1,p_2,p_3\}$.}}
\end{rem}

\section{From random paths to reduced words}\label{par:From random paths to reduced words}

\subsection{Cayley graph (see~\cite{Woess:DMC})}\label{par:Cayley_graph} Recall that the group $\Gamma=\langle s_x, s_y, s_z\rangle$ is a free product $\Z/2\Z\star\Z/2\Z\star\Z/2\Z$.
Let $\Graph_\Gamma$ be the Cayley graph of $\Gamma$ for the system of generators $(s_x,s_y,s_z)$: two vertices 
$g$, $h\in \Gamma$ are connected by an edge labelled $s$, for $s\in\{s_x,s_y,s_z \}$, if and only if $gs=h$.
This is a trivalent tree. The points of the boundary  $\partial \Graph_\Gamma$ are in one to one correspondance with infinite geodesic rays starting 
at the neutral element $1_\Gamma$.

Let $\mu$ be a probability measure on  $\{s_x, s_y, s_z\}$ such that $\mu(s_x)\mu(s_y)\mu(s_z)>0$.
We endow the set 
\begin{equation}
\Omega = \{s_x, s_y, s_z\}^\N
\end{equation}
with the probability measure $\mu^\N$. 

To an element $\omega = (f_0,f_1,\ldots)$ of  $\Omega$, we associate 
a path in $\Graph_\Gamma$: the path starts at $1_\Gamma$ and visits successively the vertices $f_0$, $f_0f_1$, $\ldots$, $f_0\cdots f_n$, $\ldots$. This path is typically not a geodesic (almost surely, some of
the words $f_0\cdots f_n$ are not reduced). On the other hand, with probability $1$ with respect to 
$\mu^\N$, this path goes to infinity in $\Graph_\Gamma$ (at a linear speed), and converges towards a unique point $\theta^+(\omega)$ of $\partial \Graph_\Gamma$. Starting at $1_\Gamma$ and following the geodesic ray corresponding to $\theta^+(\omega)$, one creates a sequence of vertices $(w_n)$ with $\dist(w_n,1_\Gamma)=n$. It is an increasing sequence of reduced words 
$w_n=s_{i_1} \cdots  s_{i_n}$. We shall say that $(w_n)$ is the {\bf{reduced sequence attached to}}~$\omega$. 

\subsection{Random dynamics at infinity}\label{par:random_dynamics_at_infinity}
Let us now come back to the dynamics of $\Gamma$ near $\partial S$ in 
$\Sbar$, for $S\in \Fam$. As in Section~\ref{par:Cayley_graph}, we consider a typical element $\omega$ of $\Omega$ with respect to 
$\mu^\N$; then, the reduced words 
\begin{equation}\label{eq:reduced_words_wn}
w_n=s_{i_1}\cdots s_{i_n}
\end{equation}
 converge. The inverse of $w_n$ is 
 \begin{equation}
 w_n^{-1}=s_{i_n}\cdots s_{i_1}.
 \end{equation}
We can extract a subsequence $(n_j)$ to assure that  $(w_{n_j}^{-1})$ also converges to some infinite reduced word $w^{-1}_\infty$. To do that, consider a subsequence such that the first letter of $w_n^{-1}$ is constant, then extract from it a subsequence such that the second letter is also constant, and so on.  Finally, apply a diagonal process. 

We shall now apply Corollary~\ref{cor:global_dynamics_at_infinity} to the sequence $(w_{n_j})$ (note that the indices are indexed from left to right for $w_n$ in Equation~\eqref{eq:reduced_words_wn}). We denote by $m$ an element from the sequence $n_j$.
Write
\begin{equation}
w^{-1}_\infty = s_{j_1}s_{j_2}\cdots s_{j_n}\cdots
\end{equation}
Note that we use indices $i_k$ for $w_n$ and $j_k$ for $w^{-1}_n$, and that the reduced word $w^{-1}_\infty$ provides a boundary point (and a geometric ray), denoted $\theta^-(\omega, (n_j))$. 

Let $m_0$ be the first integer such that $  s_{j_1}s_{j_2}\cdots s_{j_{m_0}}$ involves the three letters $s_x$, $s_y$, and $s_z$. 
Then, $w_m=s_{i_1}s_{i_2}s_{i_3}\cdots s_{i_k}\cdots s_{j_{m_0}}s_{j_{m_0-1}}\cdots s_{j_2}s_{j_1}$ for $m\geq m_0$.
Recall that  
$p_{j_1}$ denotes the indeterminacy point of $s_{j_1}$, viewed as a birational transformation of $\Sbar$. Let $B$ be a neighborhood 
of $p_{j_1}$. Then, Corollary~\ref{cor:global_dynamics_at_infinity} and Remark~\ref{rem:global_dynamics_at_infinity} provide a neighborhood $V$ of $\partial S\setminus B$ in $\Sbar$ (which depends on $B$ and $m_0$) such that, for $m\geq m_0$ in the sequence $(n_j)$, 
\begin{enumerate}
\item $w_m$ contracts $\partial S\setminus \{p\}$ onto the vertex $p_{i_1}$,
\item $w_m$ maps $V$ into $W_{i_1}$,
\item if $q$ is any point of $V\setminus \partial S$, its orbit $(w_{n_j}(q))$ converges towards $p_{i_1}$  in~$\Sbar$. 
\end{enumerate}
\begin{rem}\label{rk: pomega} Changing $(n_j)$, the point $p_{j_1}$ can be taken to be any of the vertices of $\partial S$. On the other hand, $p_{i_1}$ 
is uniquely determined by $\omega$.  \end{rem}

\begin{thm}\label{thm:random_dynamics_at_infinity_variation_of_q}
There is a subset $\Omega'$ of $\Omega$ of full $\mu^\N$-measure such that
for any $\omega=(f_0, \ldots, f_n, \ldots)$ in $\Omega'$, there is a vertex $p(\omega)$ of $\partial S$
with the following property. Given any vertex $q$ of $\partial S$, one can find a subsequence $(n_j)$ 
such that, for every neighborhood $B$ of
$q$, there is a neighborhood $V$ of $\partial S\setminus B$ in 
$\Sbar$ that satisfies:
\begin{enumerate}
\item $f_0\circ \cdots \circ f_{n_j}$ contracts $\partial S\setminus \{q\}$ onto $p(\omega)$;
\item
	for all $v \in V$, the sequence $f_0\circ \cdots \circ f_{n_j}(v)$
converges towards $p(\omega)$.
\end{enumerate}
\end{thm}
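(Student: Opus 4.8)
The plan is to combine the almost-sure convergence of the random walk to a boundary point of the Cayley graph with the compactness/diagonal extraction already set up in Section~\ref{par:random_dynamics_at_infinity}, and then quote Corollary~\ref{cor:global_dynamics_at_infinity}. First I would fix the full-measure set $\Omega'$ of those $\omega$ for which the random path $1_\Gamma, f_0, f_0f_1,\ldots$ escapes to infinity in $\Graph_\Gamma$ and converges to a boundary point $\theta^+(\omega)$; this holds $\mu^\N$-almost surely by the standard theory of random walks on free products (recalled in Section~\ref{par:Cayley_graph}), using $\mu(s_x)\mu(s_y)\mu(s_z)>0$. For such $\omega$, let $(w_n)$ be the reduced sequence attached to $\omega$ and set $p(\omega):=p_{i_1}$, the vertex of $\partial S$ attached to the first letter $s_{i_1}$ of $w_\infty=\lim w_n$; by Remark~\ref{rk: pomega} this is intrinsic to $\omega$. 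I would also shrink $\Omega'$ so that each of $s_x,s_y,s_z$ occurs infinitely often in $(f_n)$, which is again almost sure, and which guarantees (via the shift-invariance of the event) that each of the three letters appears in $w_\infty$, so that the hypothesis ``$w_n$ involves the three letters for $n$ large'' in Corollary~\ref{cor:global_dynamics_at_infinity} is met.

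Next, given a prescribed vertex $q=p_{j_1}$ of $\partial S$, I would produce the subsequence $(n_j)$. As explained in Section~\ref{par:random_dynamics_at_infinity}, one extracts $(n_j)$ so that $(w_{n_j}^{-1})$ converges to an infinite reduced word $w_\infty^{-1}=s_{j_1}s_{j_2}\cdots$; here the diagonal argument lets one prescribe the first letter $s_{j_1}$ freely among $s_x,s_y,s_z$ (each first letter occurs along a positive-density, hence infinite, set of $n$, because $s_{j}$ occurs infinitely often as $f_{n}$ and a further extraction fixes it), so one may arrange $p_{j_1}=q$. For $m$ in this subsequence, $w_m$ factors as $s_{i_1}\cdots s_{i_k}\cdots s_{j_{m_0-1}}\cdots s_{j_1}$ where $m_0$ is the first index at which $s_{j_1}\cdots s_{j_{m_0}}$ contains all three letters; writing $w_m = (\text{tail})\circ s_{j_{m_0}}\cdots s_{j_1}$ and noting $s_{j_{m_0}}\cdots s_{j_1}$ is a fixed reduced word, the birational map $f_0\circ\cdots\circ f_{n_j}=w_{n_j}$ (as an automorphism of $S$, with the composition convention of Section~\ref{par:Dynamics at infinity}) is exactly an increasing reduced word of the kind handled by Corollary~\ref{cor:global_dynamics_at_infinity}, with indeterminacy point of its first letter equal to $q$.

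Now I would simply invoke Corollary~\ref{cor:global_dynamics_at_infinity} together with Remark~\ref{rem:global_dynamics_at_infinity}: for any neighborhood $B$ of $q=p_{j_1}$ there is a neighborhood $V$ of $\partial S\setminus B$ in $\Sbar$ such that, for $n_j\geq m_0$, $w_{n_j}$ contracts $\partial S\setminus\{q\}$ onto a single vertex, $w_{n_j}(V)\subset W_1\cup W_2\cup W_3$, and the orbit of every point of $V$ converges to $\partial S$; and, by Remark~\ref{rem:global_dynamics_at_infinity}, since each letter appears infinitely often the last letter $s_{i_1}$ of $w_{n_j}$ (reading left-to-right as in~\eqref{eq:reduced_words_wn}, which is the outermost in the composition) determines that the accumulation point, hence the limit, is $p_{i_1}=p(\omega)$. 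One has to take care that the contraction vertex in point~(1) of the corollary is the same as the limit vertex: this is exactly Remark~\ref{rem:global_dynamics_at_infinity}, which identifies that vertex with $p_{i_1}$. Translating ``$w_{n_j}$'' back into ``$f_0\circ\cdots\circ f_{n_j}$'' via the conventions fixed before gives the two displayed conclusions of the theorem.

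The main obstacle is bookkeeping rather than a genuinely new idea: one must be careful (i) that the prescribed-first-letter extraction is legitimate and compatible with the diagonal argument already sketched, (ii) that the $O$-perturbations $P_{i,j}$ from Proposition~\ref{pro:monomial_at_infinity} are uniform enough that Lemma~\ref{lem:perturbed_linear_dynamics} applies simultaneously to the whole family of reduced words arising here (this is already asserted in Section~\ref{par:Dynamics at infinity}, with the constant $C$ uniform in the indices), and (iii) that the direction conventions — ``left-to-right'' indices in $w_n=s_{i_1}\cdots s_{i_n}$ versus the composition order $f_{n-1}\circ\cdots\circ f_0$ and the contraction statements of Section~\ref{par:Dynamics at infinity} — are matched correctly, so that the outermost letter governs the limit vertex $p(\omega)$ and the innermost (indeterminacy) letter governs the prescribed $q$.
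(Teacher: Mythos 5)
Your proposal is correct and follows essentially the same route as the paper: identify $\theta^+(\omega)$ and the reduced sequence $(w_n)$, set $p(\omega)=p_{i_1}$, extract $(n_j)$ so that the group elements $f_0\cdots f_{n_j}$ lie on the geodesic ray and $w_{n_j}^{-1}$ converges to an infinite reduced word with prescribed first letter $s_{j_1}$ corresponding to $q$, then apply Corollary~\ref{cor:global_dynamics_at_infinity} and Remark~\ref{rem:global_dynamics_at_infinity}. The only place where you are slightly looser than you could be (and the paper is equally terse) is the justification that the prescribed-first-letter extraction yields a limit word $w_\infty^{-1}$ actually involving all three letters, which is what makes $m_0$ finite; this follows from the almost-sure fact that all three letters occur infinitely often along the geodesic ray itself, not merely among the increments $f_n$.
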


\begin{rem}
The sequence $(n_j)$ depends on $\omega$ and $q$, but not on $B$.
We can also impose that $w^{-1}_\infty$ starts with three distinct letters. 
In that case, $V$ depends only on (the size of) $B$.
\end{rem}

\begin{proof}
As above, we consider the point $\theta^+(\omega)\in \partial \Graph_\Gamma$ and the parametrization $(w_n)$ of the corresponding geodesic ray. The sequence $(f_0, f_1, \ldots)$ is typically not reduced (consecutive letters can be equal), but 
we can extract a subsequence $(k_j)$ to impose that (a) each of the products $f_0  \cdots f_{k_j} \in \Gamma$
is equal to one of the reduced words $w_{m_j}$ and (b) $f_0 \cdots f_{n}\neq w_{m_j}$ for all $n>k_j$. Then, we extract a further subsequence  $(n_j)$ from $(m_j)$ (or equivalently from $(k_j)$) such that $w^{-1}_{n_j}$ converges to an infinite reduced word
that starts with the letter $s_{i}$, where $i$ is the index such that $q=p_i=\Ind(s_i)$. This done, the conclusion follows from our previous discussion.
\end{proof}

To conclude this paragraph, we add a definition that will be used in Section~\ref{par:stable_manifolds}. As above, starting with a typical sequence $\omega\in \Omega$, we associate a boundary point $\theta^+(\omega)$ and a geodesic ray which starts at $1_\Gamma$. The first vertex that this
ray visits corresponds to the first letter $s_{i_1}$  of $w_n$ (see Equation~\ref{eq:reduced_words_wn}). We shall say that $s_{i_1}$
is the {\bf{initial letter}} of $\theta^+(\omega)$; it will be denoted by $\Init(\theta^+(\omega))$.\\

\section{Stationary measures}

In this section, we prove our Main Theorem (see Section~\ref{par:introduction_random_dynamics}). We fix 
a probability measure $\mu$ on  $\{s_x, s_y, s_z\}$ such that $\mu(s_x)\mu(s_y)\mu(s_z)>0$ and, as in Section~\ref{par:Cayley_graph}, 
we endow the set $\Omega = \{s_x, s_y, s_z\}^\N$ with the probability measure~$\mu^\N$. 
 For $\omega = (f_0,f_1,\ldots) \in \Omega$ and $n\in \N$, we set
\begin{equation}
f_\omega^n = f_{n-1} \circ \ldots \circ f_0 \in \Aut(S).
\end{equation}

\subsection{Stationary probability measures}
Let $\nu$ be a $\mu$-stationary probability measure on $\SC$; this means that $\nu=\mu(s_x)(s_x)_*\nu + \mu(s_y)(s_y)_*\nu + \mu(s_z)(s_z)_*\nu $ (see Equation~\eqref{eq:definition_stationary}). 
Let  $\omega$ be a typical element of  $\Omega$.  
In Sections~\ref{par:Cayley_graph} and~\ref{par:random_dynamics_at_infinity}, we introduced a boundary point $\theta^+(\omega)\in\partial\Graph_\Gamma$ and, for appropriate subsequences, boundary points $\theta^-(\omega, (n_j))\in \partial\Graph_\Gamma$. We shall use these constructions and Theorem~\ref{thm:random_dynamics_at_infinity_variation_of_q} to 
describe the support of $\nu$ and the accumulation points of stable manifolds at infinity.

\subsection{Recurrence implies compact support}\label{ss: recurrence}

Let us start with an example. Endow $\GL_2(\Z)$ with a probability measure $\mu$, the support of which is finite and generates $\GL_2(\Z)$.
The group $\GL_2(\Z)$ acts linearly on $\R^2/\Z^2$, fixes the origin $o=(0,0)$, and preserves the Lebesgue measure ${\mathrm{dvol}}=\dd x\wedge \dd y$. Moreover,  by~\cite{BFLM}, if $q\notin \Q^2/\Z^2$, then $\mu^\N$-almost every random trajectory $(f_\omega^n(q))$ equidistributes towards ${\mathrm{dvol}}$. Now, let us restrict the action to the punctured torus $X=\R^2/\Z^2\setminus \{ o\}$. If one considers the puncture as a point ``at infinity'' in $X$, 
then (a)  there is a stationary measure ${\mathrm{dvol}}$ with unbounded support, (b) $(\mu^\N\otimes {\mathrm{dvol}})$-almost every random trajectory $(f_\omega^n(q))$ is unbounded, and (c) every bounded orbit of $\GL_2(\Z)$ is finite. The following proposition excludes this type of behavior for the dynamics of $\Gamma$ on  $\SC$, for any $S\in \Fam$.

\begin{pro} \label{pro:compact_support}
Let $\nu$ be a $\mu$-stationary measure on $\SC$. Then $\nu$ has compact support.
\end{pro}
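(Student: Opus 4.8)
The strategy is to argue by contradiction: suppose $\nu$ is $\mu$-stationary but has non-compact support, so that $\nu$ gives positive mass to every neighborhood of $\partial S$ in $\overline S$. The key tool is the martingale/stationarity argument together with the dynamics at infinity established in Section~\ref{par:Dynamics at infinity}. By the stationarity relation and the forward convergence of the random walk on the Cayley graph $\Graph_\Gamma$, for $\mu^\N$-almost every $\omega$ the pushforward measures $(f_\omega^n)_*\nu$ converge weakly to a limit measure $\nu_\omega$ (this is the standard construction of conditional measures for a stationary measure: $n\mapsto (f_\omega^n)_*\nu$ is a measure-valued martingale), and $\nu = \int \nu_\omega \, \dd\mu^\N(\omega)$. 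So it suffices to show that, almost surely, $\nu_\omega$ is \emph{not} supported on $\partial S$ — equivalently that the mass that $\nu$ could place near $\partial S$ cannot be ``created'' by the dynamics — and then deduce that $\nu$ itself cannot charge arbitrarily small neighborhoods of $\partial S$.

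\textbf{Key steps.} First I would make precise the claim that escape of mass to $\partial S$ forces a contradiction. Fix a small neighborhood $\mathcal{N}$ of $\partial S$ and consider the function $n\mapsto \nu((f_\omega^n)^{-1}\mathcal{N}) = ((f_\omega^n)_*\nu)(\mathcal{N})$ or rather work directly with the forward images. Using Theorem~\ref{thm:random_dynamics_at_infinity_variation_of_q} and Corollary~\ref{cor:global_dynamics_at_infinity}: for a typical $\omega$, along a suitable subsequence $(n_j)$ the maps $f_\omega^{n_j}$ contract $\partial S \setminus B$ (for $B$ a small ball around one vertex $q$) into a small neighborhood $W_{i_1}$ of a single vertex $p(\omega)$, and moreover a whole neighborhood $V$ of $\partial S\setminus B$ is swept into $W_{i_1}$. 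The point $q$ — hence the ball $B$ that must be excised — can be chosen freely by Remark~\ref{rk: pomega}. The plan is to exploit this freedom: if $\nu$ charged every neighborhood of $\partial S$, then in particular $\nu$ charges a neighborhood $V$ of $\partial S \setminus B$ for a cleverly chosen $q=p_i$, and then the mass $\nu(V)>0$ gets concentrated by $f_\omega^{n_j}$ into arbitrarily small neighborhoods of $p(\omega)$. Since $p(\omega)$ is a \emph{single point} determined by $\omega$ alone, and since for a positive-measure set of $\omega$ one gets the same vertex $p(\omega)$, the limit measure $\nu_\omega$ would have an atom at that vertex of mass at least $\nu(V)$; integrating over $\omega$ this would force $\nu$ to have an atom at a vertex $p_i$ of $\partial S$. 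But $p_i\in \partial S$ is an indeterminacy point of one of the involutions, and a direct check (using the explicit Vieta formulas~\eqref{eq: vieta} and the fact that $s_x,s_y,s_z$ have no common fixed point in $\overline S$, together with $\mu(s_x)\mu(s_y)\mu(s_z)>0$) shows no $\mu$-stationary measure can have an atom on $\partial S$: an atom at $p_i$ would have to be spread by the involution whose indeterminacy locus is \emph{not} $p_i$, contradicting stationarity and the positivity of all three weights.

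\textbf{Carrying it out.} Concretely I would: (1) recall that $\nu = \int_\Omega \nu_\omega\,\dd\mu^\N(\omega)$ where $\nu_\omega = \lim_n (f_\omega^n)_*\nu$ exists a.s.\ (martingale convergence in the space of probability measures on the compact space $\overline S$); (2) assume for contradiction that $\nu$ does not have compact support in $S$, i.e.\ $\nu(\mathcal{N}\cap S)>0$ for every neighborhood $\mathcal{N}$ of $\partial S$, so in fact $\nu(\partial S) + \limsup$ of boundary mass is positive — more carefully, one shows $\nu$ gives positive mass to $V\setminus\partial S$ for a suitable $V$ as above; (3) apply Theorem~\ref{thm:random_dynamics_at_infinity_variation_of_q}, choosing the excised vertex $q$ so that $\partial S\setminus B$ meets the support of $\nu$ substantially, to conclude that $\nu_\omega(\{p(\omega)\})\geq \nu(V\setminus\partial S)-\epsilon$ for suitable $V$; (4) since $p(\omega)\in\{p_1,p_2,p_3\}$ takes finitely many values, there is $i\in\{1,2,3\}$ and a set of $\omega$ of positive measure with $p(\omega)=p_i$, whence $\nu(\{p_i\}) = \int\nu_\omega(\{p_i\})\,\dd\mu^\N \geq (\text{positive})\cdot(\text{positive}) > 0$; (5) derive a contradiction from the fact that a $\mu$-stationary measure charges no point of $\partial S$: writing $\nu=\sum_{s}\mu(s)s_*\nu$, the atom $\nu(\{p_i\})$ at the vertex $p_i$ would be preserved, but the involution $s$ with $\Ind(s)\neq p_i$ maps $p_i$ to a \emph{different} vertex $p_j\neq p_i$ (from Figure~\ref{fig:triangle}/the monomial description), so $s_*\nu$ has an atom at $p_j$ of the same size, forcing an infinite strictly-increasing family of atoms of equal mass — impossible for a probability measure.

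\textbf{Main obstacle.} The delicate point is step (3): turning ``$\nu$ charges every neighborhood of $\partial S$'' into ``$\nu_\omega$ has a definite atom at $p(\omega)$''. One must be careful that the neighborhood $V = V(B,m_0)$ provided by Corollary~\ref{cor:global_dynamics_at_infinity} shrinks as $B$ shrinks and as $m_0\to\infty$, and that $m_0$ itself depends on $\omega$; so the lower bound $\nu(V\setminus\partial S)$ may a priori degenerate. The fix is to use the Remark after Theorem~\ref{thm:random_dynamics_at_infinity_variation_of_q}: by further extracting so that $w_\infty^{-1}$ begins with three distinct letters, the neighborhood $V$ depends only on the size of $B$ and not on $m_0$; then one quantifies over a \emph{fixed} small $B$ around a fixed vertex, obtaining a uniform-in-$\omega$ neighborhood $V_0$ with $\nu(V_0\setminus\partial S)>0$ by the non-compactness hypothesis (choosing the vertex $B$ surrounds to be one where $\nu$ does \emph{not} concentrate near). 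Making this choice of vertex consistent for a positive-measure set of $\omega$, while ensuring $p(\omega)$ is constant on that set, is the real bookkeeping; once it is done, the atom at a vertex of $\partial S$ appears and the contradiction in step (5) closes the argument.
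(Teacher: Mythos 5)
Your proof is correct and rests on the same two pillars as the paper's: the martingale decomposition $\nu=\int_\Omega\nu_\omega\,\dd\mu^\N(\omega)$ on the compact space $\Sbar$, and the contraction of a uniform neighborhood $V$ of $\partial S\setminus B$ onto the single vertex $p(\omega)$ provided by Theorem~\ref{thm:random_dynamics_at_infinity_variation_of_q}; the paper only packages the endgame differently, trapping $\Supp(\nu)$ directly in the intersection of the closures of two full-measure sets $H(\omega,(n_j))$, $H(\omega,(n'_j))$ whose traces on $\partial S$ are two distinct vertices, rather than arguing by contradiction through an atom of $\nu_\omega$ on $\partial S$. One remark: your step (5) is superfluous --- once you have $\int_\Omega\nu_\omega(\{p_i\})\,\dd\mu^\N(\omega)>0$ you already contradict $\nu(\partial S)=0$, which holds by hypothesis since $\nu$ is a probability measure on $\SC$ --- and as written it is also unsound, since with only three vertices there can be no ``infinite strictly-increasing family of atoms,'' and the involutions act on $\{p_1,p_2,p_3\}$ in a many-to-one way with indeterminacy points, so pushing an atom around the boundary would in any case require more care than the sketch suggests.
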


\begin{rem}\label{rem:local_fields_2}
If $\C$ is replaced by a local field $\mathbb{K}$, then the same proposition holds for $\mu$-stationary measures on  $S(\mathbb{K})$
(see Remark~\ref{rem:local_fields_1}). 
\end{rem}


\begin{proof}
{\bf{Step 1.--}} By classical results due to Furstenberg and Guivarc'h-Raugi, see \cite[Lemma 2.1, p.19]{Bougerol-Lacroix}, 
 there exists a borel subset $\Omega' \subset \Omega$ such that (a) $\mu^\N (\Omega') = 1$, (b) for every $\omega \in \Omega'$, the sequence $(f_0 \circ \ldots \circ f_{n-1})_*\nu$ converges towards a probability measure $\nu_\omega$ on $\SC$, and (c) the family of measures $(\nu_\omega)_{\omega\in\Omega'}$ satisfies 
\begin{equation}
\nu = \int_{\Omega} \nu_\omega \, d\mu^\N(\omega) . 
\end{equation}

{\bf{Step 2.--}} Now we follow the argument used by Bougerol and Picard to prove \cite[Lemma 3.3]{Bougerol-Picard}.
For every $\omega \in \Omega'$ and every increasing sequence of integers $(n_j)$, set 
\begin{equation}
H(\omega , (n_j) ) = \{ x \in \SC \; ; \; f_0 \circ \ldots \circ f_{n_j}(x) \textrm{ does not tend to }\partial S \} . 
\end{equation}
Let $\varphi : \SC \to \R^+$ be a smooth function with compact support. We have
\begin{equation}\label{eq: compsupport}
 \lim_{ j \to +\infty }\int_{\SC} \varphi (f_0 \circ \ldots \circ f_{n_j}) \, d\nu = \int_{\SC} \varphi \, d\nu_\omega .
 \end{equation}
Since $\varphi$ has compact support, we get $\varphi (f_0 \circ \ldots \circ f_{n_j}(x)) = 0$  for every $x \in \SC \setminus H(\omega , (n_j) )$ and $j$ large enough (depending on $x$). By the dominated convergence theorem, we get 
\begin{equation}
\lim_{ j \to +\infty }\int_{\SC \setminus H(\omega , (n_j) )} \varphi (f_0 \circ \ldots \circ f_{n_j}) \, d\nu = 0 . 
\end{equation}
 Equation \eqref{eq: compsupport} then implies
 \begin{equation}
  \lim_{ j \to +\infty }\int_{ H(\omega , (n_j) )} \varphi (f_0 \circ \ldots \circ f_{n_j}) \, d\nu = \int_{\SC} \varphi \, d\nu_\omega , 
  \end{equation}
which yields 
\begin{equation} \label{eq: H}
 \int_{\SC} \varphi \, d\nu_\omega  \leq \parallel \varphi \parallel_\infty \nu (H(\omega , (n_j) ) ) .  
 \end{equation}
Let $1_M : \C^3 \to [0,1]$ be a smooth function equal to $1$ on $B(0,M)$ and equal to zero on $\C^3 \setminus B(0,M+1)$. Replacing $\varphi$ in Equation \eqref{eq: H} by the restriction of $1_M$ to $\SC$ and taking the limit when $M$ tends to infinity, we obtain
$$ \nu (H(\omega , (n_j) ) = 1 $$ 
for every $\omega \in \Omega'$ and every subsequence.
In particular, the support of $\nu$ is contained in the closure of any such $H(\omega , (n_j))$. \\

{\bf{Step 3.--}} 
Fix $\omega \in \Omega'$ that satisfies also the conclusion of Theorem~\ref{thm:random_dynamics_at_infinity_variation_of_q} and  apply this theorem for two vertices $q\neq q'$ of $\partial S$. This provides two sequences $(n_j)$ and $(n_j')$ satisfying the following properties. For every neighborhood $B$ of $q$ (respectively $B'$ of $q'$), there exists a neighborhood $V$ of $\partial S \setminus B$ (respectively $V'$ of $\partial S \setminus B'$) such that for every $x \in V$ (respectively $x \in V'$), $f_0 \circ \ldots \circ f_{n_j}(x)$ (respectively $f_0 \circ \ldots \circ f_{n_j'}(x)$) tends to $p$ when $j$ tends to infinity. Since $p \in \partial S$, we get 
$\overline{H(\omega , (n_{j}) )} \cap \partial S\subset B $ and $\overline{H(\omega , (n'_{j}) )} \cap \partial S\subset B'$;
thus, 
\begin{equation}
\overline{H(\omega , (n_{j}) )} \cap \partial S \subset \{q\}
\; \; {\text{ and }} \; \;
 \overline{H(\omega , (n'_{j}) )} \cap \partial S  \subset \{q'\}.
 \end{equation} 
Since $q\neq q'$,   $\overline{H(\omega , (n_{j}) )} \cap \overline{H(\omega , (n'_{j}) )}$ is a compact subset of $S(\C)$, and by Step 2, this implies that the support of $\nu$ is compact.
\end{proof}

\subsection{Invariant probability measures} We can now strengthen Corollary~\ref{cor:classification_of_invariant_measures_I}.
 
\begin{cor}\label{cor:classification_of_invariant_measures_II} 
Let $\nu$ be a $\Gamma_S$-invariant, ergodic,  probability measure   on $S(\C)$. Then either $\nu$ is the counting measure on a finite orbit of $\Gamma_S$, 
  or the parameters $a$, $b$, $c$, and $d$ are in $[-2,2]$, $S(\R)$ has a unique compact component and $\nu$ is the symplectic measure $\nu_\R$ on $S(\R)_c$.\end{cor}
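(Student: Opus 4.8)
The plan is to reduce Corollary~\ref{cor:classification_of_invariant_measures_II} to Corollary~\ref{cor:classification_of_invariant_measures_I} by showing that every $\Gamma_S$-invariant, ergodic probability measure on $S(\C)$ automatically has compact support. Once compactness of the support is known, Corollary~\ref{cor:classification_of_invariant_measures_I} applies verbatim and gives the stated dichotomy. So the whole content of the proof is the statement ``$\Gamma_S$-invariant $\Rightarrow$ compactly supported.''

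The key observation is that a $\Gamma_S$-invariant probability measure is in particular $\mu$-stationary for the measure $\mu$ on $\{s_x,s_y,s_z\}$ fixed at the beginning of Section~5: if $\nu$ is invariant under each $s_i$ then $(s_i)_*\nu=\nu$, hence $\mu(s_x)(s_x)_*\nu+\mu(s_y)(s_y)_*\nu+\mu(s_z)(s_z)_*\nu=\nu$, which is exactly the stationarity relation \eqref{eq:definition_stationary}. Therefore Proposition~\ref{pro:compact_support} applies and $\nu$ has compact support.

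With that in hand I would simply write: since $\nu$ is $\Gamma_S$-invariant with compact support and ergodic, Corollary~\ref{cor:classification_of_invariant_measures_I} tells us that either $\nu$ is the counting measure on a finite orbit of $\Gamma_S$, or the parameters $a,b,c,d$ lie in $[-2,2]$, $S(\R)$ has a unique compact component $S(\R)_c$, and $\nu=\nu_\R$ is the symplectic measure on $S(\R)_c$. This is precisely the assertion of Corollary~\ref{cor:classification_of_invariant_measures_II}.

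There is essentially no obstacle here: the only point to check carefully is that the hypotheses of Proposition~\ref{pro:compact_support} (a $\mu$-stationary measure on $S(\C)$ with the standing assumption $\mu(s_x)\mu(s_y)\mu(s_z)>0$) are met, which is immediate, and that invariance plus ergodicity are preserved so that Corollary~\ref{cor:classification_of_invariant_measures_I} can be invoked without change. If one wanted to be completely self-contained one could note that $\mu$-ergodicity and $\Gamma_S$-ergodicity need not a priori coincide, but here we are simply given a $\Gamma_S$-ergodic invariant measure and only use $\Gamma_S$-ergodicity, exactly as in Corollary~\ref{cor:classification_of_invariant_measures_I}. Hence the proof is a two-line deduction combining Proposition~\ref{pro:compact_support} with Corollary~\ref{cor:classification_of_invariant_measures_I}.
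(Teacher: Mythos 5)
Your proposal is correct and is exactly the paper's own argument: the paper's proof reads ``if $\nu$ is invariant it is $\mu$-stationary, hence its support is compact, and the conclusion follows from Corollary~\ref{cor:classification_of_invariant_measures_I}.'' You have simply spelled out the same two-line deduction in more detail, and the extra care about $\mu$-stationarity and ergodicity is accurate.
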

  
Indeed, if $\nu$ is invariant it is $\mu$-stationary, hence its support is compact, and the conclusion follows from Corollary~\ref{cor:classification_of_invariant_measures_I}.

\subsection{Stable manifolds}\label{par:stable_manifolds}
Now that we know that the support of any stationary measure $\nu$ is compact, we go on to conclude the proof of our Main Theorem.

Let $\sigma$ denote the one sided left shift on $\Omega$. We introduce the skew product
$ F \colon \Omega \times \SC \to  \Omega \times \SC$, defined by 
\begin{equation}
F( \omega , q ) = (\sigma(\omega) , f_0(q) )
\end{equation}
for $\omega=(f_i)_{i\geq 0}\in \Omega$ and $q\in \SC$.
The $\mu$-stationarity of $\nu$ is equivalent to the $F$-invariance of the product measure $\mu^\N \times \nu$ on $\Omega \times \SC$. 

Let us assume that $\nu$ is ergodic, and that its support $\Supp(\nu)$ is infinite. 
According to Proposition~\ref{pro:compact_support} and Theorem~\ref{thm:compact_invariant}, $S$ is defined over $\R$, $S(\R)$ has a (unique) bounded component, and
\begin{equation}
\Supp(\nu)=S(\R)_c.
\end{equation} 
Since $\Supp(\nu)$ is compact and $\Supp(\mu)$ is finite, we can apply Kingman's subadditive ergodic theorem.
This gives  two real numbers  $\lambda^+ \geq \lambda^-$ such that
\begin{equation} 
 \lambda^+ = \lim_{n \to + \infty} {1 \over n} \log \parallel D_q f^n_\omega  \parallel \ \ \ {\text{and}} \ \ \  \lambda^- = \lim_{n \to + \infty} {1 \over n} \log \parallel (D_q f^n_\omega)^{-1} \parallel^{-1}  
 \end{equation}
for $\nu$-almost every $q \in S(\R)_c$ and $\mu^\N$-almost every $\omega \in \Omega$. These real numbers are  the {\bf{Lyapunov exponents}} of $\nu$; they satisfy
\begin{equation}
\lambda^+ + \lambda^- = 0 
\end{equation}
because the smooth part of $\SC$ is endowed with the $2$-form $\area$  and $f^* \area = \pm \area $ for every $f \in \Gamma_S$ (see Section~\ref{par:invariant_forms}).

Suppose that $\lambda^+ = \lambda^- = 0$. By Ledrappier's invariance principle (see  \cite{crauel}, and \cite{Ledrappier:SaintFlour, avila-viana} for other contexts), $\nu$ is invariant under the action of $\mu$-almost every element of $\Gamma_S$, hence by $\Gamma_S$ since the support of $\mu$ generates $\Gamma_S$. Thus, our Main Theorem follows from Corollary~\ref{cor:classification_of_invariant_measures_I} in that case.

From now on, we assume that $\nu$ is {\bf{hyperbolic}}, that is 
\begin{equation} 
\lambda^+ > 0 > \lambda^-  .
\end{equation}
 For $\nu$-almost every $q \in \SC$, we define the stable manifold
\begin{equation}
 W^s_\omega (q) := \{ q' \in \SC \; ; \; \limsup_{n \to + \infty } {1\over n} \log \dist( f_\omega^n (q) , f_\omega^n (q') ) < 0 \}.
 \end{equation}

\begin{rem}\label{rk: Liou}
By \cite[Proposition 7.8]{Cantat-Dujardin:Crelle}, $W^s_\omega (q)$ is parametrized by an injective entire curve $\psi^s_{\omega,q} :\C \to \SC$. Let ${\overline {W}}^s_\omega (q)$ be the closure of $W^s_\omega (q)$ in $\Sbar$. By Liouville's theorem,  ${\overline {W}}^s_\omega (q) \cap \partial S$ is not empty.
\end{rem}

We also define 
\begin{equation}
K(\omega) = \{ q \in \SC \; ; \;  (f_\omega^n (q))_{n \geq 0}  \textrm{ is bounded in } \SC\} . 
\end{equation}
Observe that $K$ differs from the subset $H$ given in Section \ref{ss: recurrence}, since the composition of the $f_i$'s is reversed. 
We let  ${\overline{K}}(\omega)$ be the closure of $K(\omega)$ in $\overline{S}$.
  
\begin{lem}\label{lem:stable_manifolds_in_K}
For $\nu$-almost every $q$ and $\mu^\N$-almost every $\omega$, the stable manifold
$W^s_\omega(q)$ is contained in $K(\omega)$. In particular, 
$ {\overline {W}}^s_\omega (q) \cap \partial S \subset {\overline{K}}(\omega)\cap\partial S$.
\end{lem}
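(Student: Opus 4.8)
The plan is to show that the negative-exponent behaviour along $W^s_\omega(q)$ forces the forward orbit of any point of $W^s_\omega(q)$ to stay within a bounded distance of the forward orbit of $q$, which is itself bounded because $\Supp(\nu)=S(\R)_c$ is compact and $\mu$-stationarity makes $F$-invariance of $\mu^\N\times\nu$ available. More precisely, first I would fix the full-measure set of $(\omega,q)$ for which the Oseledets/Pesin theory applies on the compact invariant set $S(\R)_c$: by Kingman and the Pesin theory for the skew product $F$ (as in \cite{Cantat-Dujardin:Crelle}), for $\mu^\N\times\nu$-almost every $(\omega,q)$ the stable manifold $W^s_\omega(q)$ exists, is an immersed holomorphic curve, and one has a uniform-on-a-full-measure-set estimate of the form: there exist measurable functions $C(\omega,q)>0$ and $r(\omega,q)>0$ and a constant $\lambda<0$ such that any $q'\in W^s_\omega(q)$ with $\dist(q,q')<r(\omega,q)$ satisfies $\dist(f_\omega^n(q),f_\omega^n(q'))\le C(\omega,q)\,e^{\lambda n}$ for all $n\ge 0$. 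This is the local stable manifold statement; the global $W^s_\omega(q)$ is the increasing union of preimages of local stable manifolds, so it suffices to handle the local piece and then propagate.

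The key step is then the following observation: since $q\in \Supp(\nu)=S(\R)_c$ for $\nu$-a.e.\ $q$, and $S(\R)_c$ is compact and $\Gamma_S$-invariant, the whole forward orbit $(f_\omega^n(q))_{n\ge 0}$ stays in the fixed compact set $S(\R)_c$, hence is bounded in $S(\C)$; that is, $q\in K(\omega)$. Combining this with the local stable estimate above, for $q'$ in the local stable manifold $W^s_{\omega,\mathrm{loc}}(q)$ one has $\dist(f_\omega^n(q'),S(\R)_c)\le \dist(f_\omega^n(q'),f_\omega^n(q))\le C(\omega,q)e^{\lambda n}\to 0$, so $(f_\omega^n(q'))_n$ is bounded in $S(\C)$ as well, i.e.\ $q'\in K(\omega)$. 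This shows $W^s_{\omega,\mathrm{loc}}(q)\subset K(\omega)$. To pass to the full stable manifold, note that if $q'\in W^s_\omega(q)$ then by definition there is $N$ with $f_\omega^N(q')\in W^s_{\sigma^N\omega,\mathrm{loc}}(f_\omega^N(q))$; applying the local result to the pair $(\sigma^N\omega, f_\omega^N(q))$ — which again satisfies the conclusions of the a.e.\ statement by invariance of the full-measure set under $F$ and since $f_\omega^N(q)\in S(\R)_c$ — gives that $f_\omega^N(q')\in K(\sigma^N\omega)$, and therefore $q'\in K(\omega)$, since boundedness of the tail orbit plus finitely many iterates is boundedness of the whole orbit. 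Hence $W^s_\omega(q)\subset K(\omega)$ for a.e.\ $(\omega,q)$.

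Finally, taking closures in $\Sbar$ gives $\overline{W}^s_\omega(q)\subset \overline{K}(\omega)$, and intersecting with $\partial S$ yields $\overline{W}^s_\omega(q)\cap\partial S\subset \overline{K}(\omega)\cap\partial S$, which is the second assertion. I expect the main obstacle to be the careful bookkeeping of the full-measure set: one must choose it invariant under the skew product $F$ so that the local stable manifold theorem, Pesin regularity, and the identity $\Supp(\nu)=S(\R)_c$ all hold simultaneously along the orbit $(\sigma^n\omega, f_\omega^n(q))$, and one must make sure the local stable manifold size $r(\sigma^n\omega,f_\omega^n(q))$ does not degenerate too fast — but this is exactly the content of the Pesin theory already invoked via \cite{Cantat-Dujardin:Crelle} and Remark~\ref{rk: Liou}, so no new difficulty arises beyond citing it correctly.
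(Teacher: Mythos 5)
Your proof is correct and its core is the paper's own argument: the forward orbit of $q$ stays in the compact set $\Supp(\nu)$, and points of the stable manifold have forward orbits converging to that of $q$, hence bounded, hence in $K(\omega)$. The entire detour through local stable manifolds, the Pesin regularity functions $C(\omega,q)$ and $r(\omega,q)$, and the local-to-global propagation is unnecessary here, because the paper defines $W^s_\omega(q)$ globally and dynamically as the set of $q'$ with $\limsup_n \frac{1}{n}\log\dist(f^n_\omega(q),f^n_\omega(q'))<0$, so every $q'\in W^s_\omega(q)$ already satisfies $\dist(f^n_\omega(q),f^n_\omega(q'))\to 0$ and the two-line argument closes immediately; Pesin theory is only needed later (Remark~\ref{rk: Liou}) to parametrize $W^s_\omega(q)$ by an entire curve.
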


\begin{proof}
The orbit $\Orb_\omega(q) = \{ f_\omega^n (q) \; ; \; n \geq 0\}$ is contained in the  compact set $\Supp(\nu)$. 
Hence $\Orb_\omega(y)$ is bounded for every $y \in W^s_\omega(q)$. 
\end{proof}

\begin{pro}\label{pro:K_at_infinity}
For $(\mu^\N\otimes \nu)$-almost every $(\omega, q)$, 
\begin{enumerate}
\item the sets  ${\overline{K}}(\omega) \cap \partial S$ and ${\overline {W}}^s_\omega (q) \cap \partial S$ are equal to the indeterminacy point of the initial letter $\Init(\theta^+(\omega))$;
\item $K(\omega)$ depends on $\omega$: it is equal to one of the three vertices of $\partial S$, 
each of them being realized with positive probability. 
\end{enumerate}
\end{pro}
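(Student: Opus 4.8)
The plan is to analyze the behavior of the bounded-orbit set $K(\omega)$ and stable manifolds near $\partial S$ by exploiting the relationship between the random path $\omega$ and its associated reduced sequence. First I would observe that since $F$ preserves $\mu^\N\otimes\nu$ and $\nu$ has compact support, the orbit $f_\omega^n(q)$ stays in $\Supp(\nu)$ for $\nu$-a.e.\ $q$, so by definition $q\in K(\omega)$ and hence $\Supp(\nu)\subset \overline{K}(\omega)$ for $\mu^\N$-a.e.\ $\omega$. The key point is that $K(\omega)$ consists of points whose \emph{forward} orbit under $f_\omega^n=f_{n-1}\circ\cdots\circ f_0$ is bounded, which is governed by the reduced word $w_n^{-1}=s_{i_n}\cdots s_{i_1}$ read from the outside in; this is precisely the situation handled by Corollary~\ref{cor:global_dynamics_at_infinity} applied to the sequence $(w_n^{-1})$, i.e.\ to a subsequential limit $w_\infty^-$ of inverse words. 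Any point near $\partial S$ but outside a small neighborhood of the first indeterminacy point of $w_\infty^-$ gets pushed towards $\partial S$, hence escapes and is \emph{not} in $K(\omega)$; so $\overline{K}(\omega)\cap\partial S$ is contained in that single vertex.

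For the matching lower bound I would argue that the stable manifold $\overline{W}^s_\omega(q)$ must meet $\partial S$ (Remark~\ref{rk: Liou}, via Liouville), and by Lemma~\ref{lem:stable_manifolds_in_K} this intersection lies in $\overline{K}(\omega)\cap\partial S$, which we have just shown is at most one vertex; so the intersection is exactly that vertex, forcing $\overline{W}^s_\omega(q)\cap\partial S$ and $\overline{K}(\omega)\cap\partial S$ to coincide and equal a single vertex $p(\omega)$. To identify this vertex as the indeterminacy point of $\Init(\theta^+(\omega))$, I would note that for a typical $\omega$, the reduced word $w_n=s_{i_1}\cdots s_{i_n}$ determines the geodesic ray $\theta^+(\omega)$ with initial letter $s_{i_1}$; when we take a subsequence $(n_j)$ so that $w_{n_j}^{-1}$ converges, the \emph{last} letter of $w_{n_j}$ (the first letter read from outside) can vary — that is the vertex $q$ in Theorem~\ref{thm:random_dynamics_at_infinity_variation_of_q} — but the \emph{image} vertex $p(\omega)$ onto which $\partial S\setminus\{q\}$ gets contracted is determined by the first letter $s_{i_1}$ of $w_n$, hence by $\Init(\theta^+(\omega))$, as recorded in Remark~\ref{rk: pomega} and Remark~\ref{rem:global_dynamics_at_infinity}. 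This yields part~(1).

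For part~(2), I would use stationarity and the shift. The map $\omega\mapsto p(\omega)$ is measurable and, from the description above, equivariant in the appropriate sense under $\sigma$: knowing that $f_\omega^n=f_{n-1}\circ\cdots\circ f_0$, the bounded set $K(\omega)$ satisfies $f_0(K(\omega))=K(\sigma\omega)$, hence $p(\sigma\omega)=f_0(p(\omega))$ (with the convention that $f_0$ acts on the vertex, which makes sense since $f_0\in\{s_x,s_y,s_z\}$ permutes or fixes the vertices of $\partial S$ in a controlled way — more precisely $p(\omega)$ is read off the first letter $s_{i_1}$ of $w_\cdot$, and prepending $f_0$ shifts which letter is first). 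Since the three vertices $p_1,p_2,p_3$ form a single orbit under the semigroup generated by $\{s_x,s_y,s_z\}$ acting on first-letters, and $\mu$ charges all three generators, the distribution of $p(\omega)$ on $\{p_1,p_2,p_3\}$ is a $\mu$-stationary probability measure on a three-point set on which the induced dynamics is a minimal (irreducible) chain; irreducibility plus uniqueness of the stationary distribution forces each vertex to have positive probability. In fact $p(\omega)$ is just the indeterminacy point of $\Init(\theta^+(\omega))$, and $\Init(\theta^+(\omega))$ is the first letter of the limiting reduced word, which is $s_x$, $s_y$, or $s_z$ with the positive probabilities coming from the classical boundary theory of the random walk on the free product $\Z/2\Z\star\Z/2\Z\star\Z/2\Z$.

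The main obstacle I anticipate is the bookkeeping in Step~1: carefully matching the ``outside-in'' reading of words that controls forward orbits $f_\omega^n$ (and hence $K(\omega)$) with the statement of Corollary~\ref{cor:global_dynamics_at_infinity}, which is phrased for increasing reduced words acting with the composition order $s_{i_{\ell(n)}}\circ\cdots\circ s_{i_1}$; one must be scrupulous about whether the relevant limiting infinite word is $\theta^+(\omega)$ or a subsequential $\theta^-(\omega,(n_j))$, and about the fact that $\overline{K}(\omega)\cap\partial S$ should not depend on the chosen subsequence even though the auxiliary contraction statements do. The cleanest route is: show $\overline{K}(\omega)\cap\partial S$ is contained in the indeterminacy point of $\Init(\theta^+(\omega))$ by applying the escape estimates of Proposition~\ref{prop: voisinage}/Lemma~\ref{lem:perturbed_linear_dynamics} to \emph{every} suitable subsequence simultaneously, then get the reverse inclusion from the nonempty intersection $\overline{W}^s_\omega(q)\cap\partial S\subset\overline{K}(\omega)\cap\partial S$ guaranteed by Liouville and Lemma~\ref{lem:stable_manifolds_in_K}.
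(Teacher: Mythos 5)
Your overall architecture is the same as the paper's: escape to infinity via Corollary~\ref{cor:global_dynamics_at_infinity} applied to the forward compositions gives ${\overline{K}}(\omega)\cap\partial S\subset\{\text{one vertex}\}$, Remark~\ref{rk: Liou} plus Lemma~\ref{lem:stable_manifolds_in_K} upgrade this to equality for both sets, and Remark~\ref{rk: pomega} handles part~(2). But the one point you explicitly leave unresolved --- whether the governing infinite word is $\theta^+(\omega)$ or a subsequential $\theta^-(\omega,(n_j))$ --- is precisely where the correct vertex is decided, and the resolution is simpler than you fear. Choose $(n_j)$ so that $f_0\cdots f_{n_j}$ equals a reduced word $w_{m_j}=s_{i_1}\cdots s_{i_{m_j}}$ in $\Gamma$ and never returns to it; then the forward composition $f_\omega^{n_j+1}$ equals $s_{i_{m_j}}\circ\cdots\circ s_{i_1}$, which is \emph{already} an increasing sequence of reduced words in the exact format of Corollary~\ref{cor:global_dynamics_at_infinity}, with constant first-applied letter $s_{i_1}=\Init(\theta^+(\omega))$. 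The excluded indeterminacy point is therefore $\Ind(s_{i_1})$, fixed, and no limit of inverse words is involved. Had you literally used ``the first indeterminacy point of $w_\infty^-$'' in the sense of Section~\ref{par:random_dynamics_at_infinity} --- that is, $\Ind(s_{j_1})$ with $s_{j_1}$ the limiting \emph{last} letter of $w_{m_j}$ --- you would get a subsequence-dependent vertex and a false escape statement; that construction is what the \emph{backward} compositions $f_0\circ\cdots\circ f_{n_j}$ of Proposition~\ref{pro:compact_support} require, not $K(\omega)$. Your second paragraph does land on the right vertex, but by a detour through the contraction target of Theorem~\ref{thm:random_dynamics_at_infinity_variation_of_q}; the direct statement is that $\Ind(\Init(\theta^+(\omega)))$ is the excluded point of the forward words. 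Note also that a single subsequence suffices: to show $q\notin K(\omega)$ one unbounded subsequence of the forward orbit is enough, so there is no need to run the estimate ``for every suitable subsequence simultaneously.''

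For part~(2), your equivariance $p(\sigma\omega)=f_0(p(\omega))$ fails exactly when $p(\omega)=\Ind(f_0)$, where $f_0$ is not defined on the vertex, so the induced three-state chain is not quite well posed. It is also unnecessary: once part~(1) identifies $p(\omega)=\Ind(\Init(\theta^+(\omega)))$, positivity of each of the three probabilities is immediate from the fact that the harmonic measure of the walk on $\Graph_\Gamma$ charges each of the three initial-letter cylinders (since $\mu$ charges all three generators), which is all the paper invokes via Remark~\ref{rk: pomega}.
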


\begin{proof}
Write the reduced words associated to $\omega=(f_0, \ldots, f_n, \ldots)$ as $w_n=s_{i_1}\cdots s_{i_n}$, as done in Section~\ref{par:Cayley_graph}.
Choose an increasing sequence $(n_j)$ such that  the unreduced word $f_0\circ  \cdots \circ f_{n_j}$
is equal, in the group $\Gamma$, to one of the reduced words $w_{m_j}$ and $f_0\circ  \cdots \circ f_{n}\neq w_{m_j}$ for all $n>n_j$.
Let $B$ be a small neighborhood of the point $p=p_{i_1}=\Ind(s_{i_1})$, and apply Corollary~\ref{cor:global_dynamics_at_infinity}: there
is a neighborhood $V$ of $\partial S\setminus B$ such that the orbit 
\begin{equation}
f_\omega^{n_j}(q)= (s_{i_{m_j}}\circ \cdots \circ s_{i_2}\circ s_{i_1})(q)
\end{equation} of every point $q\in V$ goes to $\infty$ in $S$ as 
$n_j$ goes to $+\infty$. Thus, ${\overline{K}}(\omega)\cap \partial S$ is contained in $B$. Shrinking $B$, ${\overline{K}}(\omega)\cap \partial S\subset \{p_{i_1}\}$.
The first assertion follows from Remark \ref{rk: Liou} and Lemma~\ref{lem:stable_manifolds_in_K}.  The second assertion is a consequence of the first one and Remark~\ref{rk: pomega}.\end{proof}

The stable direction $E^s_\omega(q)$ is the tangent to $W^s_\omega(q)$ at the point $q$.  
We shall say that the stable directions define an 
invariant line field if they do not depend on $\omega$ (at least over some subset of total measure in $\Omega$). Otherwise, 
we say that they {\bf{genuinely depend}} on $\omega$. 
In fact, there are two versions of $E^s_\omega(q)$,  the complex one in $T_qS(\C)\simeq \C^2$, and the real one in $T_qS(\R)_c\simeq \R^2$;
since the complex one is the complexification of the real one, this definition does not depend on the version one chooses.

\begin{pro}\label{pro:variation_of_stable_directions}
If the ergodic, hyperbolic, stationary measure $\nu$ is not invariant, the stable directions $E^s_\omega(q)$  depend genuinely on the random trajectory $\omega=(f_0, \ldots, f_n, \ldots)$.
\end{pro}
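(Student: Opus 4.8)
The plan is to argue by contradiction: assume that the stable directions $E^s_\omega(q)$ define an invariant line field, i.e. that $E^s_\omega(q)$ depends only on $q$ (mod $0$) and not on $\omega$, and then show this forces $\nu$ to be invariant, contradicting our hypothesis. Write $E^s(q)$ for this putative $\omega$-independent field, defined $\nu$-almost everywhere on $S(\R)_c$. The point is that an $\omega$-independent stable field is automatically $\Gamma_S$-invariant: since $\nu$ is stationary, for $\mu$-almost every $s\in\{s_x,s_y,s_z\}$ and $\nu$-almost every $q$, the relation $W^s_{s\omega}(s^{-1}q)\cdots$ together with the chain rule gives $D_q s^{-1}\big(E^s(q)\big)=E^s(s^{-1}q)$; because $\Supp(\mu)$ generates $\Gamma$, the measurable line field $E^s$ is $\Gamma_S$-invariant.

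Next I would derive a contradiction from the existence of a $\Gamma_S$-invariant measurable line field on $(S(\R)_c,\nu)$. There are two standard routes, and I would use whichever is cleanest given the earlier machinery. The first route: since $\nu$ is hyperbolic, Pesin theory gives a stable and an unstable measurable line field $E^s$, $E^u$, and the stationarity combined with $\omega$-independence of $E^s$ would also make $E^u$ invariant (by the same argument applied to the time-reversed cocycle, or directly: two transverse invariant directions force the derivative cocycle to be, measurably, diagonal). An invariant splitting $TS(\R)_c=E^s\oplus E^u$ that is $\Gamma_S$-invariant, together with the fact that $\area_\R$ is (anti-)invariant, would force the Lyapunov exponents to vanish along the combined flag, contradicting $\lambda^+>0>\lambda^-$ — unless the line field is supported on a proper algebraic/analytic subset, which is excluded because $\nu$ gives no mass to proper subvarieties (this was shown in the proof of Corollary~\ref{cor:classification_of_invariant_measures_I}, and $\Gamma_S$ preserves no curve by \cite[Theorem~D]{Cantat-Loray}). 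The second, more robust route, which I expect to be the actual argument: feed the invariant line field into Ledrappier's invariance principle in reverse — an $\Gamma_S$-invariant measurable line field on a surface carrying an invariant area form, under a group acting by area-preserving (up to sign) maps with a hyperbolic stationary measure, is the classical obstruction that must be absent; its presence would let one build an invariant measure on the projectivized tangent bundle projecting to $\nu$ with $\omega$-independent disintegration, and then the same rigidity that yields Ledrappier's principle shows $\nu$ itself must be invariant.

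Concretely, the cleanest implementation is: suppose $E^s_\omega(q)=E^s(q)$ is $\omega$-independent on a full-measure set. The measure $\hat\nu$ on $\P(TS(\R)_c)$ obtained by pushing $\nu$ forward along $q\mapsto (q,E^s(q))$ is then $\mu$-stationary for the projectivized action; since $E^s$ is $\Gamma_S$-invariant by the argument of the first paragraph, $\hat\nu$ is actually $\Gamma_S$-invariant. Its projection to $S(\R)_c$ is $\nu$. Now invoke the converse direction of the invariance principle / the classification of invariant measures on the projective bundle (as in \cite{crauel, avila-viana}): a $\Gamma_S$-invariant measure on $\P(TS(\R)_c)$ lying above a hyperbolic stationary measure $\nu$ and compatible with the stable field forces $\nu$ to be invariant under $\Gamma_S$. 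That contradicts the hypothesis that $\nu$ is not invariant, completing the proof.

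\textbf{Main obstacle.} The delicate point is the bookkeeping that turns "$E^s_\omega$ does not depend on $\omega$" into genuine $\Gamma_S$-invariance of a line field, and then into invariance of $\nu$: one must be careful that stable manifolds are defined only $\mu^\N\otimes\nu$-almost everywhere, that the cocycle relation $E^s_{\sigma\omega}(f_0 q)=D_q f_0\,(E^s_\omega(q))$ holds on a full-measure set, and that $\omega$-independence propagates correctly under the shift; the rest is a citation to the invariance principle. I would phrase this carefully and lean on the exact hypotheses (finite support generating $\Gamma$, compact support of $\nu$, hyperbolicity) established just above.
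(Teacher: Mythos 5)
There is a genuine gap, and it is the central one. Your strategy reduces everything to the implication ``if $E^s_\omega(q)$ is non-random then $\nu$ is invariant'', but neither of your two routes establishes it, and no general principle does. Route~1 is false as stated: a $\Gamma_S$-invariant measurable splitting $E^s\oplus E^u$ together with $f^*\area=\pm\area$ only gives $\lambda^++\lambda^-=0$, not $\lambda^\pm=0$ (a single area-preserving Anosov diffeomorphism already carries an invariant splitting with nonzero exponents); moreover non-randomness of $E^s$, which depends on the future of $\omega$, gives no control on $E^u$, which depends on the past, so the splitting itself is not available. Route~2 invokes Ledrappier's invariance principle, which applies precisely when the exponents vanish --- excluded here since $\nu$ is assumed hyperbolic --- and the assertion that a $\Gamma_S$-invariant measure on $\P(TS(\R)_c)$ above a hyperbolic $\nu$, compatible with the stable field, forces $\nu$ to be invariant is exactly the statement requiring proof, not a citation. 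A useful sanity check: if your implication held in general, then combined with the Brown--Rodriguez-Hertz theorem quoted just below (genuine dependence of $E^s_\omega$ on $\omega$ implies invariance) it would show that \emph{every} hyperbolic ergodic stationary measure of \emph{any} area-preserving random surface dynamical system is invariant, with no input from the geometry of the action; this is too strong, and it is precisely why the non-random stable direction survives as a genuine exceptional case in \cite{Brown-Rodriguez-Hertz}.

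The paper excludes that exceptional case using the specific geometry of $S$ at infinity, which your argument never touches. By Remark~\ref{rk: Liou}, each stable manifold $W^s_\omega(q)$ is an injective entire curve whose closure in $\Sbar$ meets $\partial S$, and Proposition~\ref{pro:K_at_infinity} identifies ${\overline{W}}^s_\omega(q)\cap\partial S$ with the single vertex $\Ind(\Init(\theta^+(\omega)))$, which takes each of the three possible values with positive probability. Hence the stable \emph{manifolds} genuinely depend on $\omega$. The passage from manifolds to \emph{directions} is then obtained by importing the proof of \cite[Theorem 9.1]{Cantat-Dujardin:Crelle}: compactness of the ambient Kähler surface is used there only (a) to prove, via Hodge theory and Nevanlinna currents, that $W^s_\omega(q)$ depends genuinely on $\omega$ --- replaced here by Proposition~\ref{pro:K_at_infinity} --- and (b) for moment conditions, which hold because the dynamics is confined to the compact set $S(\R)_c$ by Proposition~\ref{pro:compact_support}. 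If you want a self-contained argument, the step you must supply is this geometric one, not a soft invariance-principle argument.
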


Indeed, we only need to apply the proof of Theorem 9.1 in \cite{Cantat-Dujardin:Crelle}. 
This theorem assumes that the dynamical system is defined on a compact Kähler surface for only  two reasons. Firstly, to 
use Hodge theory and Nevanlinna currents in order to show that $W^s_\omega(q)$ depends genuinely on $\omega$; in our context, this follows from Proposition~\ref{pro:K_at_infinity}. Secondly, the compactness of the manifold is used for certain moment conditions to be satisfied; here, the same conditions are available because, according to Proposition~\ref{pro:compact_support}, the study can be done on~$S(\R)_c$.

\subsection{Classification of stationary measures: proof of the Main Theorem}
Let $S$ be an element of $\Fam$. 
Let $\nu$ be an ergodic, $\mu$-stationary, probability measure on $\SC$. As explained in Section~\ref{par:stable_manifolds}, we can assume that $\nu$ is not invariant, since otherwise our Main Theorem follows from Proposition~\ref{pro:compact_support} and Corollary~\ref{cor:classification_of_invariant_measures_I}.
Under this assumption, the  support of $\nu$ is infinite, hence equal to $S(\R)_c$, as above. 
And by Ledrappier's invariance principle we can assume that $\nu$ is hyperbolic. 
Thus, we can restrict the dynamics, i.e.\ $\Gamma_S$ and $\nu$, to $S(\R)_c$ and apply the following theorem of Brown and Rodriguez-Hertz: 
\begin{thm}[Theorem 3.4 of \cite{Brown-Rodriguez-Hertz}]
Let $M$ be a closed surface, endowed with a probability measure $\mathrm{dvol}$ given by a smooth area form. 
Let $\mu$ be a probability measure on ${\mathsf{Diff}}^2(M; \mathrm{dvol})$ with finite support; let $\Gamma_\mu$ be the subgroup of ${\mathsf{Diff}}^2(M; \mathrm{dvol})$ generated by the support of $\mu$. Let $\nu$ be a hyperbolic, ergodic, and $\mu$-stationary probability measure on $M$. If for $\nu$-almost every $q\in M$ the stable direction $E^s_\omega(q)$ depends genuinely on $\omega$, then $\nu$ is
invariant by $\Gamma_\mu$. 
\end{thm}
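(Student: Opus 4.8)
The plan is to prove Theorem 3.4 of \cite{Brown-Rodriguez-Hertz} in our setting by deriving a contradiction from the assumption that $\nu$ is not $\Gamma_\mu$-invariant. First I would set up the measurable dynamics: form the skew product $F\colon \Omega\times M\to \Omega\times M$, $F(\omega,q)=(\sigma\omega, f_0(q))$, so that $\mu^\N\times\nu$ is $F$-invariant and ergodic (ergodicity of $\nu$ for the $\mu$-walk transfers to the skew product over the one-sided shift). Since $\nu$ is hyperbolic, Oseledets over $F$ yields measurable splittings $T_qM=E^s_\omega(q)\oplus E^u_\omega(q)$, with $\mu^\N\times\nu$-a.e.\ well defined stable and unstable line fields, and stable/unstable manifolds $W^s_\omega(q)$, $W^u_\omega(q)$ depending measurably on $(\omega,q)$. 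The heart of the argument is the study of \emph{conditional measures} of $\nu$ along these invariant foliations, together with the \emph{entropy} $h_{\mu}(\nu)$ of the stationary measure, for which one has the Margulis--Ruelle inequality $h_\mu(\nu)\le \int\lambda^+\,d\nu<\infty$ and the Ledrappier--Young-type characterization of equality (in the conservative surface setting) as SRB-ness along unstable manifolds.

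The key steps, in order, are as follows. (1) Show that $h_\mu(\nu)>0$: if $h_\mu(\nu)=0$ then by the entropy formula the unstable conditionals are atomic, which by a standard argument (invariance of the family of stable manifolds and atomicity of unstable conditionals, exploiting the $F$-invariance) forces $\nu$ to be $\mu$-a.s.\ invariant, contradicting our assumption; hence $h_\mu(\nu)>0$ and by Margulis--Ruelle $\lambda^+>0>\lambda^-$ genuinely, and $\nu$ has nontrivial absolutely continuous unstable conditionals. (2) Run the \emph{normal forms} / nonstationary linearization along unstable manifolds: by Katok--Lewis--type arguments one gets, on a full-measure set, $C^{1+}$ coordinates on $W^u_\omega(q)$ in which the dynamics is affine, and the unstable conditional measures of $\nu$ pushed to these coordinates form a measurable family of (up to affine normalization) Radon measures on $\R$. (3) Consider the group $A_{\omega,q}$ of affine maps of $\R$ generated by all holonomy/transition maps in these normal-form coordinates; the measures are, up to scaling, invariant under $A_{\omega,q}$. (4) Dichotomy: either $A_{\omega,q}$ is (conjugate into) the translation group for a.e.\ $(\omega,q)$ — in which case the unstable conditionals are Lebesgue, and one can then integrate in the stable direction using the \emph{transverse} smoothness coming from the hypothesis that $E^s_\omega(q)$ genuinely depends on $\omega$ — to conclude $\nu$ is absolutely continuous on $M$, hence (being $\mu$-stationary and a.c.\ with respect to a smooth invariant volume class, via an exactness/zero-drift argument) actually $\Gamma_\mu$-invariant; or (5) $A_{\omega,q}$ contains genuine homotheties, and then the $A_{\omega,q}$-invariant measure class forces the conditionals to be supported on a single point or on a self-similar set — but a single atom contradicts $h_\mu(\nu)>0$, and the self-similar/non-Lebesgue alternative is ruled out precisely by the hypothesis that the stable line field is non-constant in $\omega$: the genuine dependence provides, via the $u$-Gibbs / stationarity equation, extra ``random'' contraction rates that are incompatible with a fixed self-similar ratio. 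The upshot is case (4), giving $\Gamma_\mu$-invariance of $\nu$.

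The main obstacle I expect is Step (5): ruling out the ``non-resonant / self-similar'' alternative and converting the qualitative statement ``$E^s_\omega(q)$ genuinely depends on $\omega$'' into a quantitative mechanism that kills atomic or singular unstable conditionals. This is exactly the technical core of Brown--Rodriguez-Hertz and relies on a delicate coupling between the randomness of the walk and the geometry of stable holonomies: one must show that if the unstable conditionals were non-Lebesgue, then the stable foliation would be forced to be \emph{deterministic} (an invariant line field independent of $\omega$), contradicting the hypothesis. Concretely this uses (a) the $F$-invariance of the measurable partition into stable manifolds, (b) a Hopf-type argument comparing $\nu$-conditionals along stable and unstable leaves, and (c) a careful bookkeeping of the affine normal-form cocycle to show that a persistent homothety in $A_{\omega,q}$ propagates, via the $\mu$-stationarity relation $\nu=\sum_s\mu(s)\,s_*\nu$, into a rigidity on the stable directions. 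Everything else — the entropy dichotomy of Step (1), the construction of normal forms in Step (2), and the ``Lebesgue conditionals $\Rightarrow$ invariance'' conclusion of Step (4) via a zero-drift/exactness argument on $M$ with its smooth invariant volume — is by now fairly standard in smooth ergodic theory, though technically lengthy; I would cite the relevant parts of \cite{Brown-Rodriguez-Hertz} and the Ledrappier--Young machinery rather than reproduce them.
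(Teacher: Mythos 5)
This statement is not proved in the paper at all: it is quoted verbatim as Theorem 3.4 of \cite{Brown-Rodriguez-Hertz} and used as a black box, so there is no internal proof to compare yours with. Judged on its own terms, your text is an outline of the known Brown--Rodriguez-Hertz strategy rather than a proof, and the decisive step is missing. The heart of their theorem is the implication ``stable directions genuinely random $\Rightarrow$ the unstable conditionals of $\nu$ are Lebesgue (SRB)'', and it is established contrapositively by an exponential-drift argument in the style of Benoist--Quint and Eskin--Mirzakhani: one works in the affine normal-form coordinates on unstable manifolds, uses time-reversed words, stopping times and the martingale convergence theorem to produce extra translational invariance of the unstable conditionals, and shows that the only way this fails is if $E^s_\omega(q)$ is measurable with respect to the ``deterministic'' $\sigma$-algebra, i.e.\ non-random. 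Your step (5) replaces this mechanism with the assertion that genuine dependence of $E^s_\omega(q)$ supplies ``random contraction rates incompatible with a fixed self-similar ratio'', which is not an argument, and you then propose to cite \cite{Brown-Rodriguez-Hertz} for exactly this point --- which is circular, since that is the theorem being proved.

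Two further steps are stated too loosely to stand as written. In (1), zero fiber entropy does not by itself yield atomic unstable conditionals \emph{and} invariance by a ``standard argument''; the zero-exponent/zero-entropy regime is handled in this circle of ideas by Ledrappier's and Avila--Viana's invariance principle, and in the hyperbolic case the entropy dichotomy enters through Ledrappier--Young ($h=\lambda^+$ iff SRB), not through a direct contradiction with non-invariance. In (4), the passage from ``unstable conditionals are Lebesgue'' to ``$\nu$ is $\Gamma_\mu$-invariant'' in the conservative setting is a genuine step (absolute continuity via local product structure, then an argument showing that a stationary measure absolutely continuous with respect to the invariant volume is actually invariant); calling it an ``exactness/zero-drift argument'' does not identify it. So the proposal correctly names the ingredients (skew product, Oseledets, normal forms, conditional measures, Ledrappier--Young) but does not contain the proof; the core of the theorem is deferred to the reference itself.
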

By Proposition~\ref{pro:variation_of_stable_directions}, the stable directions $E^s_\omega(q)$ depend genuinely on $\omega$ for $\nu$-almost every $q$. By this Theorem, $\nu$ should be invariant, contradiction. 

\begin{rem}
The surface $S(\R)_c$, in our case, may have singularities. But they  are quotient singularities and the group $\Gamma_S$ preserves the orbifold structure of $S(\R)$, thus \cite{Brown-Rodriguez-Hertz} can be applied without any change, even if $S(\R)$ is singular. 
\end{rem}

\section{Margulis functions and applications: an example}

In this final section, we study the example from Section~\ref{par:boalch-klein}, the goal being to describe further properties satisfied by the dynamics of $\Gamma$.

\subsection{The surface} The parameters are $(A,B,C,D)=(1,1,1,0)$. We write $S$ for $S_{(1,1,1,0)}$ and $\Gamma$ for $\Gamma_S$. The surface $S$ is smooth and the compact component $S(\R)_c$ is homeomorphic to a sphere. According to~\cite{Lisovyy-Tykhyy}, every orbit of $\Gamma$ in $\SC$ is infinite, except the orbit of the origin $o=(0,0,0)$. 
Thus,  our main theorem shows that the space of stationary measures is an interval, the endpoints of which are  
\begin{itemize}
\item   the symplectic measure $\nu_\R$, supported by $S(\R)_c$,  and
\item $\delta_{\Gamma(o)}$, the counting measure on the finite orbit.   
\end{itemize}

We will be interested in the following problem. Fix a point $q$ of $\SC \setminus \Gamma(o)$. Given $\omega=(f_0, \ldots, f_n, \ldots)\in \Omega$, consider the empirical measures $\nu_N(\omega; q)$ defined in Equation~\eqref{eq:empirical_measures}. As already said in Section~\ref{par:introduction_random_dynamics}, for a typical $\omega$ any cluster value $\nu$ of $(\nu_N(\omega; q))$ in the space of measures on $\SC$ is a stationary measure (though its total mass may be $<1$).
The question is to determine the decomposition  of such a measure $\nu$ as a convex combination $\alpha \delta_{\Gamma(o)}+\beta \nu_\R$.

\subsection{Expansion along $\Gamma(o)$} The stabilizer of $o$ is a subgroup of $\Gamma$ of index $7$ that contains $f=(s_y\circ s_x)^2$, $g=(s_x\circ s_z)^2$, $h=(s_z\circ s_y)^2$.
The tangent space at the origin is given by the equation $u+v+w$ and can be parametrized by $(u,v,-u-v)$. In these coordinates,  the differentials of $f$, $g$, and $h$ at the origin act on $T_oS$ by
\begin{align}
Df_o(u,v) &= (2u+v, -u), \\ 
Dg_o(u,v) &= (u+v, v), \\
Dh_o(u,v) &= (u, -u+v).
\end{align}
Thus, $Df_o$, $Dg_o$, and $Dh_o$ generate the group $\SL_2(\Z)\subset \GL(T_oS)$.

According to~\cite[Theorem 8.16]{cantat-dujardin:expansion}, this implies that the finite orbit $\Gamma(o)$ is expanding, in the sense of~\cite[Section 1.3]{cantat-dujardin:expansion} and~\cite{Chung}; in other words, in average, the dynamics of $\Gamma$ is infinitesimally repelling along the orbit $\Gamma(o)$. From~\cite[Theorem 4.3]{cantat-dujardin:expansion}\footnote{In this statement and in Theorem 4.4 of \cite{cantat-dujardin:expansion}, the ambient complex manifold is assumed to be compact, but this is not really used in the proof. }), we get:\\ 
\indent {\sl{if $q$ is a point of $S(\R)_c\setminus \Gamma(o)$, then for a typical $\omega$ in $\Omega$, the empirical 

measures $\nu_N(\omega; q)$ converge towards the symplectic measure $\nu_\R$.}} \\
In other words, as soon as $q\in S(\R)_c$ is not on the finite orbit, then its typical random trajectories do not charge $\Gamma(o)$.

\subsection{Expansion along $S(\R)_c$} The transformation $f$ from the previous paragraph act on $S$ in the following way. It preserves the fibration $\pi_3\colon S(\C)\to \C$, $\pi_3(x,y,z)=z$. It acts by homography on each fiber $x^2+y^2+z_0^2+xyz_0=x+y+z_0$. Along $S(\R)_c$, $\pi_3$ is a fibration in (topological) circles (except for the singular fibers) along which $f$ acts as a rotation by an angle that depends analytically on $z_0$. The behavior of $g$ and $h$ is similar, but with respect to the other fibrations $\pi_2$ and $\pi_1$. 

Thus, the situation is slightly different from~\cite{cantat-dujardin:expansion}, in which parabolic automorphisms preserve fibrations into curves of genus $1$, but the same arguments apply. From~\cite[Theorem 1.5]{cantat-dujardin:expansion}, we deduce that the dynamics is expanding along $S(\R)_c$. Then, \cite[Theorem 4.5]{cantat-dujardin:expansion} shows that, using the complex structure, the expansion can be transfered transversally to $S(\R)_c$, and this shows that \\
\indent {\sl{if $q$ is a point of $\SC\setminus S(\R)_c$ and $\omega$ is typical, then the only cluster value 

of $(\nu_N(\omega; q))$ is the zero measure: in average, the orbit of $q$ goes to $\partial S$.}} 
\vspace{0.1cm}

{\noindent}This argument also shows the following. 

\indent {\sl{Suppose that $S\in \Fam$ is defined over $\R$, $S(\R)$ has a smooth compact compo-

nent $S(\R)_c$, and $\SC$ does not contain any finite orbit. Let $q$ be a point from

$\SC\setminus S(\R)$. Then, for a typical random sequence $\omega$,  the only cluster value 

of $(\nu_N(\omega; q))$ is the zero measure.}}   

\section{Appendix}

{\small{
We extend the study of Section~\ref{par:From random paths to reduced words}  to relate it  to hyperbolic geometry and Furstenberg theory and to improve Proposition~\ref{pro:compact_support}.

\subsection{Isometries of $\H$} The group $\Gamma\simeq \Gamma_2^{\pm}$ can also be viewed as a group of isometries of the upper half plane $\H$. Explicitly, the generators $s_x$, $s_y$, $s_z$ are mapped to the involutive isometries (see Section~\ref{par:vieta_involutions})
\begin{equation}
 \sigma_x(z) = - \bar z +2, \ \ \sigma_y(z) = {\bar z \over 2 \bar z -1}, \ \ \sigma_z(z) = -{ \bar z} .
 \end{equation}
Each involution is the reflection with respect to one side of the ideal triangle with vertices 
$0$, $1$, $\infty$ in $\partial\H=\R\cup\{\infty\}$.
This triangle $\T$ is a fundamental domain of the action of $ \Gamma_2^{\pm}$, its images tesselate $\H$, and the dual graph of this tesselation 
can be identified to $\Graph_\Gamma$.

\begin{rem} The Cayley map $(z-\ii)/(z+\ii)$ maps the upper half plane to the unit disk $\disk$ and the triangle to the ideal triangle $\T_\disk\subset \disk$ with vertices $1$, $-1$ and $-i$. 
\end{rem}

As in Section~\ref{par:Cayley_graph}, let $\omega$ be a typical element of $\Omega$ and let $(w_n)$ be the sequence of reduced words derived from $\omega$, 
with $w_n=s_{i_1} \cdots  s_{i_n}$. The isometries 
$\sigma_{i_1}\circ \cdots \circ \sigma_{i_n}$ map $\T$ to a sequence of adjacent triangles 
\begin{equation}
\T_0=\T, \; \; \T_1=\sigma_{i_1}(T), \; \; \T_2=\sigma_{i_1}(\sigma_{i_2}(T)), \; \; \ldots.
\end{equation} the dual 
Then, the sequence of triangles $(\T_n)$ 
converges in the Hausdorff topology of $\overline\H$ towards a unique boundary point $\theta^+_\H(\omega)$.

\begin{figure}[h]
\includegraphics[width=0.6\textwidth]{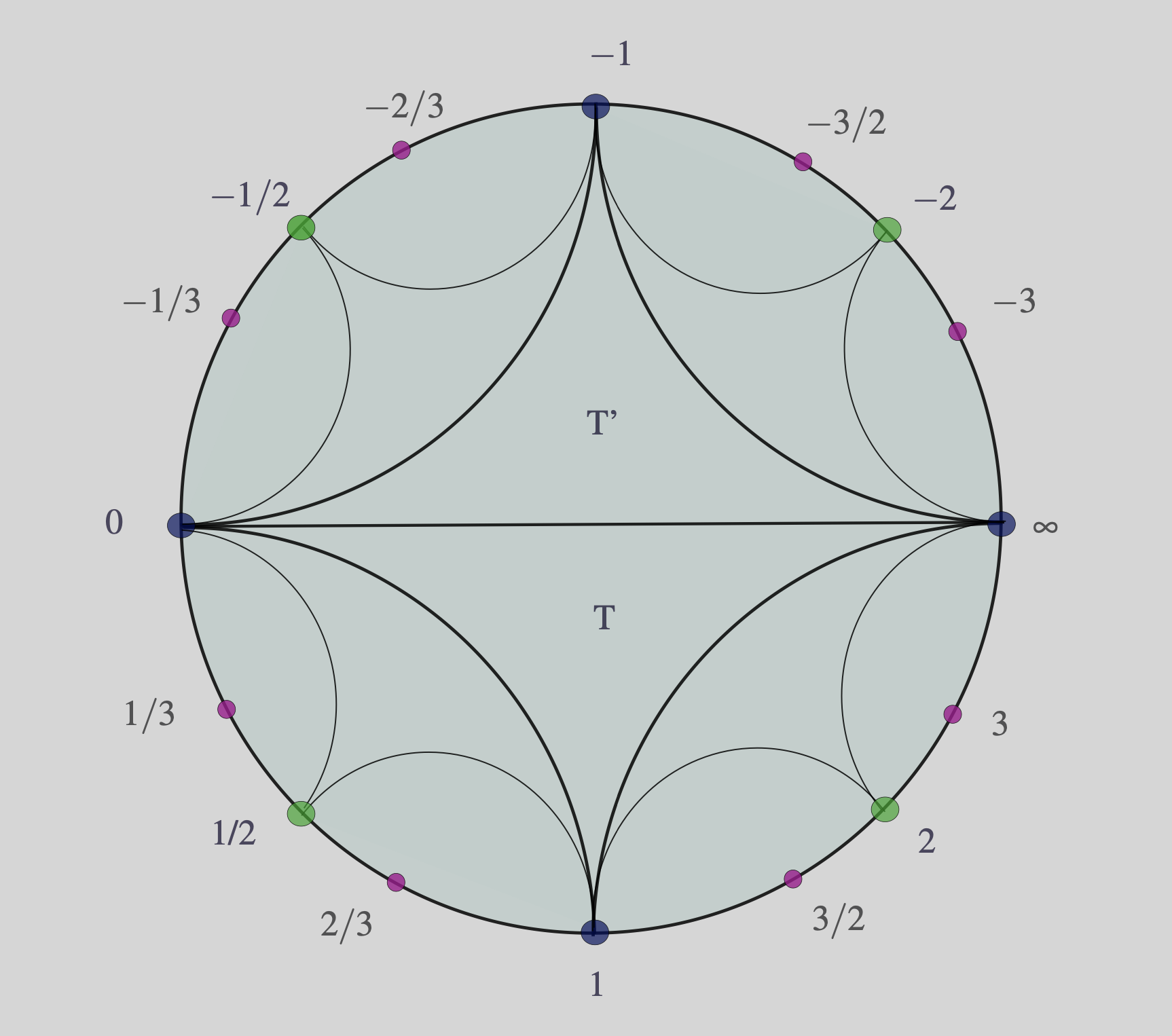}
\caption{
{\small{ We draw a picture in the unit disk, using the change of variable $z\mapsto  (z-\ii)/(z+\ii)$, but we label boundary points according to the natural parametrization $\partial \H=\R\cup\{\infty \}$.
The triangle $\T$ and its images under the action of $\Gamma_2^\pm$ tesselate the hyperbolic disk. For instance, the triangle $\T'$  
corresponds to $\sigma_z(\T)$, and the magenta points correspond to curves of depth $2$ in ${\overline{S}}_2$. 
}}
}
\end{figure}

\subsection{Matrices and Furstenberg theory (see~\cite{Bougerol-Lacroix})}\label{par:basic_Furstenberg_theory} Fix a norm $\parallel \cdot \parallel$ on $\Mat_2(\R)$, and we recall that the boundary $\partial \H$ is naturally identified to the projective line $\P^1(\R)=\P(\R^2)$. 
The isometries $\sigma_x$, $\sigma_y$, $\sigma_z$ correspond to elements $\hat\sigma_x$, $\hat\sigma_y$, $\hat\sigma_z$ of $\PGL_2(\R)$ (see Equation~\eqref{eq:involution_matrices}). We lift these elements to matrices in $\GL_2(\R)$, using the same notation.
Typically, the norm of the product $\hat\sigma_{i_1} \cdots  \hat\sigma_{i_n}$ 
goes exponentially fast to $+\infty$. If we normalize $\hat\sigma_{i_1}  \cdots  \hat\sigma_{i_n}$ by dividing by its norm, 
we obtain a sequence of matrices 
\begin{equation}
[\hat\sigma_{i_1} \cdots  \hat\sigma_{i_n}]=\frac{\hat\sigma_{i_1} \cdots  \hat\sigma_{i_n}}{\parallel \hat\sigma_{i_1} \cdots  \hat\sigma_{i_n}\parallel}\in \Mat_2(\R),
\end{equation}
each of which has norm $1$. Extracting a subsequence $(n_j)$, we may assume that this sequence converges towards an element $\hat\sigma_\infty$ of $\Mat_2(\R)$; then, with probability $1$ with respect to $\mu^\N$, we have 
\begin{enumerate}
\item $\hat\sigma_\infty$ has rank $1$ and its image coincides with the line corresponding to $\theta^+_\H (\omega)$ in the identification 
$\partial \H=\P^1(\R)$; in particular, the image does not depend on the subsequence.
\end{enumerate}
On the other hand the kernel does depend on $(n_j)$. More precisely, consider the inverse of $s_{i_1} \cdots  s_{i_n}$, 
which is the same thing as the reversed word $s_{i_n} \cdots  s_{i_1}$. Then, as in Section~\ref{par:random_dynamics_at_infinity}, 
extract a subsequence such that each of 
$s_{i_{n_j}}$, $s_{i_{n_j-1}}$, etc,  converges towards an element of $\{s_x,s_y,s_z\}$; in other words, extract a subsequence to 
make the path $s_{i_{n_j}}$, $s_{i_{n_j}} s_{i_{n_j}-1}$, etc, converge into $\Graph_\Gamma$ to a geodesic ray. This ray corresponds 
to a point of $\partial \Graph_\Gamma$, to a sequence of adjacent triangles in $\H$, and to a boundary point $\theta^-_\H(\omega, (n_j))
\in \partial H= \P^1(\R)$. With such a choice, 
\begin{enumerate}
\item[(2)] the kernel of $\hat\sigma_\infty$ is the line in $\R^2$ corresponding to the point $$\theta^-_\H(\omega, (n_j))\in \P^1(\R).$$ 
\end{enumerate}
For a typical $\omega$, each point of $\P^1(\R)$ is equal to $\theta^{-}_\H(\omega, (n_j))$ for some subsequence $(n_j)$. 
This follows from two facts: (a) the limit set $\Lim(\Gamma)\subset \partial \H$ is equal to $\H$ and (b) the support of the Furstenberg measure 
is equal to $\Lim(\Gamma)$.

\begin{rem} Assume that $(n_j)$ is chosen in such a way that $\theta^+_\H(\omega)\neq \theta^-_\H(\omega, (n_j))$
Let $I\subset \partial \H$ be an open neighborhood of $\theta^-_\H(\omega, (n_j))$. If $I$ is small enough, its complement $I^{c}=\partial \H\setminus I$ is a compact neighborhood of $\theta^+_\H(\omega)$, and in the Hausdorff topology of $\partial \H$, 
the sequence $\sigma_{i_1}\circ \cdots \circ \sigma_{i_n}(I^c)$ converges towards $\theta^+_\H(\omega)$.
\end{rem}

\subsection{Compactifications of $S$} \label{par:compactifications}

\subsubsection{} Let $m$ be a positive integer. Blow-up $\overline{S}$ at the vertices of $\partial S$ to get a compactification by $6$ rational curves organized in a hexagon, and repeat this process to get a compactification ${\overline{S}}_m$ of $S$ by $N:=3\cdot 2^m$ rational curves organized in a cycle. We denote by $\partial S_m$ its boundary.

\subsubsection{} The dual graph ${\mathcal{C}}_m$ of $\partial S_m$ has one vertex per rational curve, and one edge between two vertices if the corresponding curves have a point in common.  Topologically, ${\mathcal{C}}_m$ is a circle with $N$ points on it. 
The graph ${\mathcal{C}}_m$ is obtained from ${\mathcal{C}}_{m-1}$ by adding a new vertex in the middle of each edge, i.e.\ by doing a barycentric subdivision. We say that a vertex of ${\mathcal{C}}_m$ has depth $k\leq m$ if it appears first in the $k$-th barycentric subdivision ${\mathcal{C}}_k$. The three lines of the triangle $\partial S_0=\partial S$ correspond to vertices of depth $0$.
One way to parametrize ${\mathcal{C}}_m$ is the following. In $\partial \H$, 
consider the three end points of $\T$ and their images under the action of all reduced words in $\{ \sigma_x, \sigma_y, \sigma_z\}$ of length $\leq m$; we obtain a circle $\partial \H$ with $N$ marked points, hence a graph ${\mathcal{H}}_m$; the edges of ${\mathcal{H}}_m$ are intervals of $\partial \H$. There is an equivariant map sending $\partial \H$ to ${\mathcal{C}}_m$: it maps the vertices of $\T$ to the vertices of ${\mathcal{C}}_m$ of depth $0$, then the (new) vertices in $\sigma_x(\T)$, $\sigma_y(\T)$, $\sigma_z(\T)$, etc.

\subsubsection{} Now, consider an element $f$ in $\Gamma\simeq \Gamma_2^{\pm}$.  Suppose that the $3$ vertices of $f^{-1}(\T)$ are contained in the interior of an edge $I$ of ${\mathcal{H}}_m$ and that the three vertices of $f^{-1}(\T)$ are contained in the interior of an edge $J$. These edges correspond to edges $I'$ and $J'$ of ${\mathcal{C}}_m$, hence to vertices $p$ and $q$ of the cycle of rational curves $\partial S_m$. Then the birational map induced by $f$ on ${\overline{S}}_m$ contracts $\partial S_m\setminus \{q\}$ onto $p$.
Moreover, one can construct neighborhoods $W_j$ of the vertices of $\partial S_m$ satisfying properties which are analogous to the ones listed in Corollary~\ref{cor:global_dynamics_at_infinity} and then extend Theorem~\ref{thm:random_dynamics_at_infinity_variation_of_q}.
We do not prove these properties. But they can be derived from~\cite{Nguyen-Bac-Dang:M2} (or~\cite[Chapter 8]{Cantat-Cornulier}) and the computations done in Sections~\ref{par:Dynamics at infinity} and~\ref{par:From random paths to reduced words}. Nguyen Bac Dang studies the dynamics of $\Gamma$ on the space of valuations centered at infinity in $S$.  

\subsection{Application} 
One can now strengthen Proposition~\ref{pro:compact_support} as follows. 

\begin{thm}
Let  $S_1$, $S_2$, $\ldots$, $S_m$ be  elements of $\Fam$. If $\nu$ is a stationary measure for the diagonal action of $\Gamma$ on 
$S_1(\C)\times \cdots \times S_m(\C)$, the support of $\nu$ is compact. 
\end{thm}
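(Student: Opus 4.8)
The plan is to adapt the proof of Proposition~\ref{pro:compact_support} to the product setting, the key point being that the dynamics at infinity of the diagonal action decomposes coordinatewise, so the neighborhoods constructed at infinity in each factor can be combined. First I would record the obvious general structure: let $X = S_1(\C)\times\cdots\times S_m(\C)$ and, writing $\overline{X} = \overline{S_1}\times\cdots\times\overline{S_m}$, set $\partial X = \overline{X}\setminus X$, which is the union of the $m$ ``partial boundary'' sets $\overline{S_1}\times\cdots\times\partial S_k\times\cdots\times\overline{S_m}$. The three steps of the proof of Proposition~\ref{pro:compact_support} go through verbatim in this generality: Step~1 (Furstenberg--Guivarc'h--Raugi) gives a full-measure set $\Omega'$ on which $(f_0\circ\cdots\circ f_{n-1})_*\nu$ converges to $\nu_\omega$ with $\nu = \int \nu_\omega\,d\mu^\N$; Step~2 (the Bougerol--Picard argument with test functions of compact support) shows that for every $\omega\in\Omega'$ and every subsequence $(n_j)$, writing $H(\omega,(n_j)) = \{x\in X\,;\, f_0\circ\cdots\circ f_{n_j}(x) \text{ does not tend to } \partial X\}$, one has $\nu(H(\omega,(n_j))) = 1$, so $\Supp(\nu)$ is contained in $\overline{H(\omega,(n_j))}$.

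The heart of the matter is Step~3: producing, for a typical $\omega$, several subsequences $(n_j^{(1)}),\ldots$ for which the intersection of the corresponding sets $\overline{H(\omega,(n_j))}$ is relatively compact in $X$. Here I would invoke Theorem~\ref{thm:random_dynamics_at_infinity_variation_of_q} \emph{simultaneously in each factor}: for a typical $\omega$, in the factor $S_k$ there is a vertex $p_k(\omega)$ of $\partial S_k$, and for any choice of vertices $(q_1,\ldots,q_m)$ with $q_k\in\partial S_k$, one can find a common subsequence $(n_j)$ such that in each factor $f_0\circ\cdots\circ f_{n_j}$ contracts $\partial S_k\setminus\{q_k\}$ onto $p_k(\omega)$ and maps a neighborhood $V_k$ of $\partial S_k\setminus B_k$ into a small neighborhood of $p_k(\omega)$. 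The crucial observation is that the subsequence can be taken common to all $m$ factors: the word $w^{-1}_\infty$ attached to the subsequence is the same element of $\Gamma$ acting on every $S_k$, so Corollary~\ref{cor:global_dynamics_at_infinity} applies in each factor along the \emph{same} sequence of reduced words; one only needs $w^{-1}_\infty$ to start with the letter $s_i$ with $q_k = p_i$, and since the three vertices of each triangle $\partial S_k$ are labelled by the same three involutions $s_x,s_y,s_z$, a single choice of initial letter pins down one vertex in every factor at once. Consequently, for this $(n_j)$, $\overline{H(\omega,(n_j))}\cap\partial X$ is contained in $\bigcup_k (\overline{S_1}\times\cdots\times B_k\times\cdots\times\overline{S_m})$, and shrinking the $B_k$'s, in $\bigcup_k(\overline{S_1}\times\cdots\times\{q_k\}\times\cdots\times\overline{S_m})$.

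Finally I would run the point-counting argument of Step~3: choosing two \emph{distinct} vertices $q_k\neq q_k'$ in \emph{each} factor — for instance $q_k$ corresponding to $s_x$ and $q_k'$ to $s_y$ for every $k$ — one gets two subsequences $(n_j)$ and $(n_j')$ such that $\overline{H(\omega,(n_j))}\cap\partial X$ avoids the open set where some coordinate is near $q_k'$, while $\overline{H(\omega,(n_j'))}\cap\partial X$ avoids the open set where some coordinate is near $q_k$; since for a point of $\partial X$ at least one coordinate lies in $\partial S_k$, the two closed sets $\overline{H(\omega,(n_j))}$ and $\overline{H(\omega,(n_j'))}$ cannot share a point of $\partial X$, hence their intersection is a compact subset of $X$. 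By Step~2 this intersection contains $\Supp(\nu)$, so $\nu$ has compact support. The main obstacle is the bookkeeping in this last step — one must check that finitely many well-chosen subsequences suffice to pin every coordinate of every point of $\partial X$ away from the support; concretely two subsequences with a uniform (factor-independent) choice of the pair of vertices already work, exactly as in the single-factor case, because $\partial X$ is the union of the $m$ partial boundaries and in each of them one coordinate is forced into $\partial S_k$.
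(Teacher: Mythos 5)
Your overall strategy is the right one, and for the crucial Step~3 it actually takes a cleaner route than the paper does. The paper only runs the ``several subsequences'' argument directly for $m=2$, where it records the lossy containment $\overline{H(\omega,(n_j))}\cap\partial M\subset\overline{S_1}\times\{q\}\cup\{q\}\times\overline{S_2}$; the intersection of two such sets for $q\neq q'$ is $\{(q,q'),(q',q)\}\neq\emptyset$, so a third vertex is needed there, and for $m\geq 3$ even the three vertices of the triangle no longer suffice (a point whose coordinates realize all three vertices survives every intersection). This is precisely why the paper passes to the iterated blow-ups ${\overline{S}}_m$ with $3\cdot 2^m$ boundary curves for general $m$. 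Your claim that two subsequences, with a single uniform choice of the pair of vertices, suffice for every $m$ is nevertheless correct — but it needs a containment strictly stronger than the one you state.

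Concretely, the containment you write, namely $\overline{H(\omega,(n_j))}\cap\partial M\subset\bigcup_k\overline{S_1}\times\cdots\times\{q\}\times\cdots\times\overline{S_m}$, only says that \emph{some} coordinate equals $q$; it does not exclude the point $(q,q')$ when $m=2$, so it does not imply your later assertion that $\overline{H(\omega,(n_j))}\cap\partial M$ avoids every point having a coordinate near $q'$, and the final intersection argument does not close as written. What saves it is the sharper, coordinatewise statement: writing $H_k(\omega,(n_j))$ for the non-escaping set of the single factor $S_k$, one has $H(\omega,(n_j))\subset\prod_k H_k(\omega,(n_j))$, because a product point tends to $\partial M$ as soon as one coordinate tends to $\partial S_k$; and Step~3 of the proof of Proposition~\ref{pro:compact_support}, applied verbatim in each factor along the \emph{common} subsequence (the vertex $q$, being the indeterminacy point of the first letter of $w_\infty^{-1}$, is indeed the same for all factors, as you note), gives $\overline{H_k(\omega,(n_j))}\cap\partial S_k\subset\{q\}$ for every $k$. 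Hence \emph{every} coordinate of a point of $\overline{H(\omega,(n_j))}\cap\partial M$ that lies at infinity equals $q$, not merely one of them. With this, a common point of $\overline{H(\omega,(n_j))}$ and $\overline{H(\omega,(n'_j))}$ on $\partial M$ would have some coordinate in some $\partial S_k$, hence equal to both $q$ and $q'$, which is absurd; the intersection is then a compact subset of $M$ containing $\Supp(\nu)$ by Step~2. So insert the factorization $H\subset\prod_k H_k$ and state the boundary containment coordinate by coordinate; your two-subsequence argument then goes through for all $m$ and bypasses the compactifications ${\overline{S}}_m$ used in the paper.
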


\begin{proof}[Sketch of Proof]
Set $M=S_1\times \cdots \times S_m$ and denote by $\overline{M}$ its closure in $(\P^3)^m$. The boundary $\partial M={\overline{M}}\setminus M$  is the union of the sets 
\begin{equation}
\partial_i M=\overline S_1 \times \cdots \times \partial S_i\times \cdots \times \overline S_m.
\end{equation}
Define $H(\omega, (n_i))$ as in the proof of Proposition~\ref{pro:compact_support}, but for the diagonal dynamics of $\Gamma$ on $M$. 

First, assume $m=2$. The closure of $H(\omega, (n_i))$ in $\overline{M}$ intersects
its boundary on a set that is contained in ${\overline{S_1}}\times \{q\}\cup \{q\}\times {\overline{S_2}}$; varying the choice of the subsequence $(n_i)$, the point $q$ can be chosen to be any of  the vertices at infinity. Now, if $q\neq q'$,
\begin{equation}
({\overline{S_1}}\times \{q\}\cup \{q\}\times {\overline{S_2}}) \cap ({\overline{S_1}}\times \{q'\}\cup \{q'\}\times {\overline{S_2}}) 
= \{(q,q'), (q',q)\}.
\end{equation}
Thus, the accumulation points of the support of $\nu$  in $\partial M$ are contained in $\{(q,q'), (q',q)\}$ for any pair of vertices $(q,q')$ at infinity. Since 
\begin{equation}
\{(q,q'), (q',q)\}\cap \{(q,q''), (q'',q)\}=\emptyset
\end{equation}
when $q$, $q'$, $q''$ are pairwise distinct, this shows that the support of $\nu$ is compact.

Now, let us prove the result for any $m\geq 1$. For this we use Section~\ref{par:compactifications}. Blow-up each $\overline{S_i}$ at the vertices of $\partial S_i$ to get a compactification by $6$ rational curves organized in a hexagon, and repeat this process $m$ times to get a compactification by $N:=3\cdot 2^m$ rational curves organized in a cycle; let $q_i$ be the vertices of this cycle, with $1\leq i\leq N$. Then, choosing correctly $(n_i)$, one sees that the accumulation points of $\Supp(\nu)$ in $\partial M$ are contained in  
\begin{equation}
B(I)=\{(p_1, \ldots, p_m)\; ; \; p_i\in\{q_{i_1}, \ldots, q_{i_m}\} \; \; {\text{ for each}} \; \;  i\leq m\}
\end{equation}
for each multi-index $I=(i_1, \ldots, i_m)\in \{1, \ldots, N\}^m$. The intersection of the $B(I)$ being empty, $\Supp(\nu)$ is compact. 
 \end{proof}

}}

\bibliographystyle{plain}
\bibliography{referencesMarkov}

\end{document}